\documentclass[11pt]{amsart}
\usepackage[utf8]{inputenc}
\usepackage{amssymb,xcolor,comment}
\usepackage[shortlabels]{enumitem}
\usepackage{mathdots}
\bibliographystyle{plain}
\usepackage[a4paper, footskip=1cm, headheight = 16pt, top=3.5cm, bottom=2.5cm,  right=2.5cm,  left=2.5cm ]{geometry}
\usepackage[colorlinks,linkcolor=blue,citecolor=red]{hyperref}
\usepackage[center]{caption}
\usepackage{eufrak}
\usepackage{dsfont}

\newcommand{\previousauthors}{Abrishami, Alecu, Chudnovsky, Hajebi, Spirkl, Vu\v{s}kovi\'c \cite{abrishami2022induced}}

\newcommand{\pictured}{Dashed lines represent paths of arbitrary length.}

\newcommand{\whalf}{$\left(w, \frac{1}{2}\right)$}

\usepackage{listings}
\lstset{
basicstyle=\small\ttfamily,
columns=flexible,
breaklines=true
}

\newtheorem{theorem}{Theorem}[section]
\newtheorem{lemma}[theorem]{Lemma}
\newtheorem{conjecture}[theorem]{Conjecture}

\newtheorem{assumptions}[theorem]{Assumption}

\usepackage[foot]{amsaddr}

\usepackage[T1]{fontenc}

\usepackage[leqno]{amsmath}

\makeatletter
\newcommand{\leqnomode}{\tagsleft@true}
\newcommand{\reqnomode}{\tagsleft@false}
\makeatother

\usepackage{latexsym}
\usepackage{amsfonts}

\usepackage{tikz}
\usepackage{float}
\usepackage{lmodern}


\usepackage{marvosym}               


\DeclareMathOperator{\tw}{tw}

\DeclareMathOperator{\core}{core}
\DeclareMathOperator{\closure}{closure}

\newcounter{tbox}

\newcommand{\sta}[1]{\medskip\medskip\refstepcounter{tbox}\noindent{\parbox{\textwidth}{(\thetbox) \emph{#1}}}\vspace*{0.3cm}}

\newcommand{\mylongtitle}[1]{%
  \ifodd\value{page}%
    \protect\parbox{0.97\linewidth}{#1}\hfill%
  \else%
    \hfill\protect\parbox{0.97\linewidth}{#1}%
  \fi%
}

\usepackage{latexsym}
\usepackage{amsfonts}
\usepackage[leqno]{amsmath}
\usepackage{tikz}
\usetikzlibrary{decorations.pathmorphing}
\usetikzlibrary{arrows,positioning}

\tikzset{snake it/.style={decorate, decoration=snake}}
\usepackage{float}
\usepackage{lmodern}
\usepackage[T1]{fontenc}


\usepackage{marvosym}

\newcommand{\otherlabel}[2]{\protected@edef\@currentlabel{#2}\label{#1}}
\makeatother

\mathchardef\mh="2D

\title[Induced subgraphs and tree decompositions XIV.]{Induced subgraphs and tree decompositions\\
XIV. Non-adjacent neighbours in a hole}

\author{Maria Chudnovsky$^{\ast \amalg}$}
\author{Sepehr Hajebi$^{\mathsection}$}
\author{Sophie Spirkl$^{\mathsection \parallel}$}

\address{$^{\ast}$Princeton University, Princeton, NJ, USA}
\address{$^{\mathsection}$Department of Combinatorics and Optimization, University of Waterloo, Waterloo, Ontario, Canada}
\address{$^{\amalg}$ Supported by NSF-EPSRC Grant DMS-2120644 and by AFOSR grant FA9550-22-1-0083.} 
\address{$^{\parallel}$ We acknowledge the support of the Natural Sciences and Engineering Research Council of Canada (NSERC), [funding reference number RGPIN-2020-03912].
Cette recherche a \'et\'e financ\'ee par le Conseil de recherches en sciences naturelles et en g\'enie du Canada (CRSNG), [num\'ero de r\'ef\'erence RGPIN-2020-03912]. This project was funded in part by the Government of Ontario. This research was conducted while Spirkl was an Alfred P. Sloan Fellow.}
\date {\today}
\begin{document}
\maketitle

\begin{abstract}
    A \emph{clock} is a graph consisting of an induced cycle $C$ and a vertex not in $C$ with at least two non-adjacent neighbours in $C$. We show that every clock-free graph of large treewidth contains a ``basic obstruction'' of large treewidth as an induced subgraph: a complete graph, a subdivision of a wall, or the line graph of a subdivision of a wall. 
\end{abstract}

\section{Introduction}
All graphs in this paper are finite, undirected and simple. Given a graph $G$ and a set $X \subseteq V(G)$, we write $G \setminus X$ for the graph arising from $G$ by deleting all vertices in $X$, and we write $G[X]$ for the subgraph of $G$ induced by $X$, that is, the graph $G \setminus (V(G) \setminus X))$. If $H$ is isomorphic to $G[X]$ for some $X$, we say that $G$ \emph{contains} $H$; otherwise, we say $G$ is \emph{$H$-free}. For a family $\mathcal{H}$ of graphs, we say that $G$ is \textit{$\mathcal{H}$-free} if $G$ is $H$-free for all $H \in \mathcal{H}$. 

A \emph{tree decomposition} of a graph $G$ is a pair $(T, \tau)$ where $T$ is a tree and $\tau : V(T) \rightarrow 2^{V(G)}$ assigns to each vertex of $T$ a subset of $V(G)$, such that the following hold: 
\begin{itemize}
    \item $\bigcup_{t \in V(T)} \tau(t) = V(G)$; 
    \item for every edge $xy \in E(G)$, there is a vertex $t \in V(T)$ such that $x, y \in \tau(t)$; and
    \item for every $v \in V(G)$, the induced subgraph $T[\{t \in V(T): v \in \tau(t)\}]$ is connected. 
\end{itemize}
The \emph{width} of $(T, \tau)$ is $\max_{t \in V(T)} |\tau(t)|-1$. The \emph{treewidth of $G$}, denoted $\tw(G)$, is the minimum width of a tree decomposition of $G$. 

Treewidth was defined and used by Robertson and Seymour \cite{robertson1986graph} as part of the Graph Minors series. In particular, from the point of view of graph minors, as well as subgraphs, it is well-known that (subdivided) walls ``cause'' large treewidth \cite{robertson1986graph5}. 

In the realm of induced subgraphs, this causal role is partly played by the four natural families of graphs: 
\begin{itemize}
    \item the complete graph $K_{t+1}$; 
    \item the complete bipartite graph $K_{t,t}$; 
    \item subdivisions of the $(t\times t)$-wall; and
    \item line graphs of subdivisions of the $(t\times t)$-wall.
\end{itemize}
These graphs, shown in Figure~\ref{fig:basic} and defined in \cite{abrishami2021induced}, are called \emph{$t$-basic obstructions}. For every $t\geq 1$, all $t$-basic obstructions have treewidth $t$, and so if a graph $G$ contains a $t$-basic obstruction, then $\tw(G) \geq t$. The converse, on the other hand, is not true: Let us call a graph \emph{$t$-clean} if it does not contain a $t$-basic obstruction. Each of the following constructions are examples of $3$-clean graphs of arbitrarily large treewidth: 
\begin{itemize}
    \item Pohoata-Davies graphs \cite{davies, pohoata2014unavoidable} (see Figure~\ref{fig:davies}); 
    \item ``Layered wheels'' \cite{sintiari2021theta};
    \item ``Occultations'' \cite{twix,bonamy2023sparse}. 
\end{itemize}

  \begin{figure}[t!]
        \centering
        \includegraphics[width=0.8\textwidth]{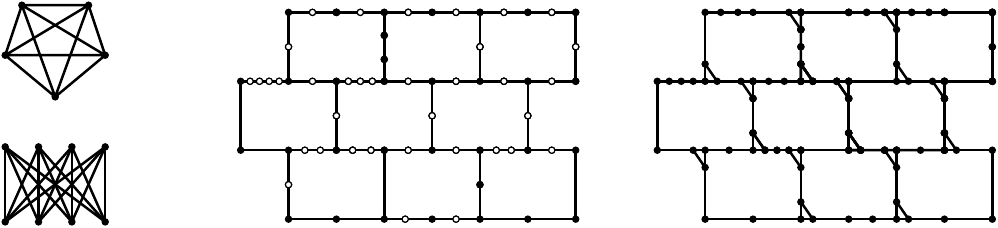}
        \caption{The $4$-basic obstructions}
        \label{fig:basic}
    \end{figure}   

Let us say that a class $\mathcal{C}$ of graphs is \emph{clean} if there is a function $f : \mathbb{N} \rightarrow \mathbb{N}$ such that for all $t$, every $t$-clean graph $G$ in $\mathcal{C}$ satisfies $\tw(G) \leq f(t)$. Then, as we saw above, the class of all graphs is not clean. Indeed, several proper hereditary classes are known not to be clean, among which the class of wheel-free graphs is rather well-studied.

A \textit{hole} is an induced cycle on four or more vertices. A \textit{wheel} is a graph consisting of a hole $C$ and a vertex $v$ with at least three neighbours in $C$. Wheels appear to be essential to the construction of two of the above three ``non-basic'' obstructions: the layered wheels (hence the name) and the occultations. In fact, these two constructions contain a hole with a vertex that has arbitrarily many neighbours in it. The Pohoata-Davies graphs, however, are wheel-free, and so the class of wheel-free graphs is not clean. But the Pohoata-Davies graphs conatin a relaxed version of wheels: A \emph{clock} is a graph consisting of a hole $C$ and a vertex $v$, called the \emph{center} of the clock, such that the neighbours of $v$ in $C$ contain two non-adjacent vertices. Observe that every wheel is a clock, and that clocks are found in abundance in the Pohoata-Davies graphs. 

In the present paper, our main result is the following (conjectured in \cite{abrishami2022induced}): 
\begin{theorem}\label{thm:main}
The class of clock-free graphs is clean. 
\end{theorem}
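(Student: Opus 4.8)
The plan is to argue by contradiction. Suppose $G$ is $(t+1)$-clean and clock-free but $\tw(G)$ exceeds any bound in terms of $t$ that we will need. Since $G$ has no $K_{t+1}$ we have $\omega(G)\le t$, so it will suffice to extract, for every $t'$, an induced subdivision of the $(t'\times t')$-wall or an induced line graph of such a subdivision. Two elementary consequences of clock-freeness are used constantly: for every hole $C$ and every vertex $v\notin V(C)$, the set $N(v)\cap V(C)$ is empty, a single vertex, or an edge of $C$ (so in particular $G$ is wheel-free); and $G$ contains no induced $K_{2,3}$, whence, together with $\omega(G)\le t$, any two non-adjacent vertices have only a bounded number (in terms of $t$) of common neighbours.

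The first reductions are routine. Since treewidth is unchanged, up to the size of the largest bag, by decomposition along clique cutsets, and since both being clock-free and being $(t+1)$-clean are hereditary, we may assume $G$ has no clique cutset; similarly, as the basic obstructions are connected, we may assume $G$ is connected. From $\tw(G)$ large we now extract a highly structured subgraph: either directly via the Grid Minor Theorem of Robertson and Seymour — a wall of huge order $N$ as a minor, fixed by an inclusion-minimal model in which every branch set induces a path or a subdivided claw whose leaves are endpoints of the chosen model edges — or, as I expect the actual argument does, via the ``well-linked set'' and ``central bag'' machinery of the earlier papers in this series, which yields a comparable object while controlling how the rest of $G$ attaches to it. After pre-subdividing we may take the bricks of the wall to be as long as we like.

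The heart of the proof is to clean this object into an honest induced basic obstruction, and there are two kinds of unwanted adjacency. First, the local congestion at a branch vertex of the wall: the branch set together with the attachment points of its (at most three) wall-neighbours and all edges among them spans only boundedly many vertices — this is where $\omega(G)\le t$ and the bound on common neighbourhoods of non-adjacent pairs are used — so there are only finitely many isomorphism types of such local pieces, and by a Ramsey argument we may pass to a sub-wall of order still tending to infinity on which all local pieces are of one type; trivial pieces steer us towards an induced subdivided wall, small-clique pieces towards its line graph. Second, the jumps: edges between branch sets whose wall-vertices are non-adjacent. These must be shown, after re-routing and passing to a further sub-wall, to be either absent or absorbable into the local structure. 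Once both kinds of adjacency are tamed, the cleaned model is, up to routine identification, an induced subdivision of a wall, or an induced line graph of a subdivision of a wall, of order exceeding $t$ — contradicting $(t+1)$-cleanness.

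I expect the main obstacle to be controlling the jumps. Clock-freeness must here be used in an essential, non-local way: a single jump does not by itself create a clock, since a hole with one chord is clock-free, so one cannot reason one hole at a time but must route holes through the global rung-and-path structure of the wall and locate a branch vertex that sees two non-adjacent vertices of such a hole. The difficulty is compounded by the branch sets not being of bounded size, so that even a long jump need not produce a short hole, and by the need to kill \emph{all} jumps simultaneously without destroying more than a bounded fraction of the wall. It is exactly here that clock-freeness must be strictly stronger than wheel-freeness, since the wheel-free Pohoata-Davies graphs are $3$-clean of unbounded treewidth: the proof has to pin down, inside the would-be counterexample, the pair of non-adjacent neighbours in a hole that wheel-freeness alone is powerless to forbid.
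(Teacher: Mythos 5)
Your write-up is a programme, not a proof, and the gap sits exactly where you say you expect the difficulty to be. Everything up to and including ``extract a wall minor (or a well-linked set) from large treewidth'' is standard, but the heart of your plan --- cleaning that object into an \emph{induced} subdivision of a wall or its line graph by a Ramsey argument on local pieces and then ``taming the jumps'' --- is never carried out; you explicitly defer it (``I expect the main obstacle to be controlling the jumps'') and offer no mechanism by which clock-freeness kills edges between branch sets of unbounded size. This is not a routine missing detail: converting a wall minor into an induced basic obstruction is precisely the step that fails in general (the layered wheels and occultations have huge walls as minors but no induced basic obstructions), and no argument of this ``direct cleaning'' type is known to work even for the much more restricted classes treated earlier in this series. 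Your two elementary observations (neighbourhoods in holes are cliques, no induced $K_{2,3}$, hence bounded common neighbourhoods of non-adjacent pairs) are correct but nowhere near sufficient to control attachments to a wall whose branch sets are unbounded, which you yourself note. So as it stands the proposal does not prove Theorem~\ref{thm:main}.

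For comparison, the paper never cleans a wall at all. It first reduces to (clock, diamond)-free graphs with no star cutset via Lemmas~\ref{startoclique} and~\ref{diamondlemma} (clique cutsets being harmless for treewidth), and then argues through balanced separators rather than minors: paws and seagulls force cutsets of the form $\{v\}\cup K$ with $K$ a clique (Theorems~\ref{thm:paw} and~\ref{thm:seagull}); these $2$-clique cutsets yield canonical separations whose core members are pairwise loosely non-crossing (Theorem~\ref{thm:noncrossing}); decomposing along all of them simultaneously gives a central bag $\beta^*$ with marker paths in which every vertex has only a bounded number of neighbours of degree more than two (Lemma~\ref{lem:centraldegree}), so that the earlier result Theorem~\ref{smalldensecomps} plus $t$-cleanness bounds $\tw(\beta^*)$; finally the resulting small balanced separator is lifted back to $G$ (Theorem~\ref{thm:finale}) and Lemma~\ref{lemma:bs-to-tw} converts this into a treewidth bound. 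If you want to salvage your approach you would have to supply the entire jump-elimination argument, which is exactly the content your sketch leaves open; alternatively, redirect the effort toward producing cutsets and a central bag, where clock-freeness enters through concrete configurations (short pyramids, paws, seagulls) rather than through a global re-routing of holes in a wall.
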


In \cite{abrishami2022induced}, with Abrishami, Alecu, and Vu\v{s}kovi\'c, we proved  a weakening of Theorem~\ref{thm:main}, that graphs in which every vertex $v$ has at most one neighbour in every hole not containing $v$ form a clean class. Explicitly, we proved the following (prisms and pyramids are defined in Section \ref{sec:defs}): 
\begin{theorem}[\previousauthors]
    The class of (clock, prism, pyramid)-free graphs is clean. 
\end{theorem}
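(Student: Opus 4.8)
The plan is to run the argument inside the ``balanced separator'' framework that organizes this series. It is standard that a hereditary class is clean as soon as balanced separators are bounded, so it suffices to find $g:\mathbb{N}\to\mathbb{N}$ such that for every $t$, every $t$-clean (clock, prism, pyramid)-free graph $G$, and every weight function $w:V(G)\to\mathbb{R}_{\ge 0}$, there is $X\subseteq V(G)$ with $|X|\le g(t)$ such that every component of $G\setminus X$ has $w$-weight at most $\tfrac12 w(V(G))$; call such an $X$ a \whalf-balanced separator. Assume this fails and pick a counterexample $(G,w)$ with $|V(G)|$ minimum. From $K_{t+1}$-freeness we get $\omega(G)\le t$. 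Two reductions follow from minimality: if $G$ has a clique cutset $K$, then a \whalf-balanced separator of an appropriate block (with respect to the induced weight, after a small adjustment) together with $K$ is a \whalf-balanced separator of $G$ of size only $t$ larger, so we may assume $G$ has no clique cutset; similarly we may delete any redundant vertex (e.g.\ one dominated by another), so $G$ is connected and in a suitable sense trim.

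The structural core, where the hypotheses are spent, is an analysis of how the graph attaches to a hole. Clock-freeness already forces every vertex outside a hole $C$ to have at most two neighbours on $C$, and those two to be consecutive. The key lemma to prove is a three-attachment lemma in the style of the standard three-path-configuration extractions: if $A$ is a connected induced subgraph disjoint from a hole $C$ and the neighbours of $A$ on $C$ are ``spread'' over $C$ (not confined to two consecutive short arcs), then $G[A\cup C]$ contains a theta, a prism, a pyramid, or a wheel. Prisms, pyramids, and wheels --- the last because every wheel is a clock (three vertices of a hole are never pairwise adjacent) --- are all forbidden, so such an $A$ must create a theta, and conversely, in the theta-free part of $G$ every connected set attaches to every hole in at most two consecutive arcs. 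Feeding this back together with prism- and pyramid-freeness applied at triangles, one shows the graph is ``triangle-tame'': every triangle is essentially cut off by a small separator, so the obstruction to a small balanced separator cannot come from a dense triangulated region and must be essentially triangle-free and ``theta-like'' --- many internally disjoint induced paths with nearly common ends. (An easier argument using $K_{t,t}$-freeness rules out ``biclique-like'' blow-ups in parallel.) This is precisely where the extra hypotheses beyond clock-freeness buy us something: in the full clock-free theorem one must contend with triangles and with the ``two consecutive neighbours on a hole'' configurations directly, whereas here they are tamed.

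The heart of the argument is then to derive a contradiction with $t$-cleanness from the failure of small balanced separators. If no \whalf-balanced separator of size $\le g(t)$ exists, a Menger/well-linkedness argument produces a large family of pairwise disjoint, pairwise linkable connected subgraphs; using the theta-structure above, one upgrades it to a large system of internally disjoint induced paths running between two branch vertices or along a common path --- in effect a large subdivided-wall skeleton realized as a minor. One then cleans this up, discarding a bounded fraction at each stage, into an honest \emph{induced} subdivision of a wall, or, in the residual triangulated situations, into the line graph of one. The engine of the cleanup is a Ramsey argument on the finitely many ``interaction types'' of a pair of paths in the system: each bad type is eliminated by exhibiting one of the forbidden graphs (clock, prism, or pyramid) or by violating $t$-cleanness, after which a pigeonhole pass extracts a large ``pure'' sub-system whose cross-path adjacencies are exactly those of a wall subdivision (respectively its line graph). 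A pure sub-system of the required size is an induced $t$-basic obstruction, contradicting $t$-cleanness.

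The step I expect to be the main obstacle is exactly this last one: converting a grid-/theta-like \emph{minor} into a genuinely \emph{induced} basic obstruction. This is the phenomenon responsible for the class of all graphs --- and of wheel-free graphs --- failing to be clean, so the clock/prism/pyramid hypotheses must be used essentially and quantitatively delicately here. Concretely, one must classify how two induced paths of the extracted system can attach to one another and to the branch vertices, eliminate every bad attachment pattern by producing a forbidden configuration, and then Ramsey down to a large sub-system avoiding all bad patterns simultaneously --- all while keeping the bounds explicit in $t$ and carefully separating the triangle-free regime (giving subdivided walls) from the triangulated regime (giving line graphs of subdivided walls). Getting this case analysis exhaustive, and propagating the hypotheses correctly through the clique-cutset and trimming reductions, is the real work.
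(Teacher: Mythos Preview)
The paper does not prove this theorem; it is quoted from \cite{abrishami2022induced} as prior work, and the present paper's contribution is the strictly stronger Theorem~\ref{thm:main}. So there is no proof here to compare against directly. What can be compared is the method the paper uses for the stronger result, which---given that the key auxiliary results (Lemmas~\ref{startoclique}, \ref{diamondlemma}, Theorem~\ref{smalldensecomps}) are all imported from \cite{abrishami2022induced}---is evidently a refinement of the same machinery.

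Your plan diverges sharply from that machinery. The paper's route is a \emph{central bag} argument: reduce to diamond-free with no star cutset; show that certain local configurations (paws, seagulls with a claw-center end) force cutsets of the form ``clique $\cup$ one vertex''; take the closure of all such two-clique cutsets, prove their canonical separations are pairwise loosely non-crossing, and intersect the big sides to form a central bag $\beta^*$ augmented by marker paths; then show $\beta^*$ has $\gamma_3$ bounded (every vertex has at most $\omega(G)$ neighbours of degree more than two), apply the black-box Theorem~\ref{smalldensecomps} to get a small balanced separator in $\beta^*$, and lift it to $G$. No wall is ever constructed by hand; the basic obstructions enter only through the black box.

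Your proposal instead tries to extract an induced wall directly from the absence of small balanced separators, via a Menger/well-linkedness step followed by a Ramsey cleanup on ``interaction types'' of paths. The gap is precisely where you place it, and it is fatal as written: the passage from ``many disjoint linkable pieces'' or ``a theta-like minor'' to ``an induced subdivided wall or its line graph'' is the entire content of the theorem, and nothing in your outline says how the clock/prism/pyramid hypotheses make that cleanup go through. In the actual arguments those hypotheses are not spent on a path-by-path Ramsey analysis at all; they are spent on producing the two-clique cutsets and on controlling neighbourhoods inside the central bag so that $\gamma_3$ is bounded. Your steps 4--6 restate the goal rather than give a mechanism, and the earlier structural claims (e.g.\ ``every triangle is essentially cut off by a small separator'') are asserted without indicating which forbidden configuration would be produced if they failed.
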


On the other hand, the following strengthening of Theorem~\ref{thm:main} might be true (where a \textit{$t$-clock} is a clock consisting of a hole $C$ and a vertex $v$ such that $v$ there are two neighbours $x,y\in V(C)$ of $v$ where the distance between $x$ and $y$ along $C$ is at least $t$): 
\begin{conjecture}
    For every fixed $t\geq 1$, the family of $t$-clock-free graphs is clean. 
\end{conjecture}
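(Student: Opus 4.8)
The plan is to argue by contradiction, following the wall-cleaning strategy that runs through this series. Fix $t$, and suppose for contradiction that there is a clock-free, $t$-clean graph $G$ whose treewidth exceeds the bound $f(t)$ that the argument will eventually produce. After the standard preliminary reductions --- pass to a connected component, retain a large well-linked set witnessing the treewidth (so that many vertex-disjoint paths are available for rerouting), and use Ramsey-type pruning to arrange that the induced cycles we construct are long --- invoke the grid/wall minor theorem of Robertson and Seymour \cite{robertson1986graph5}: since $G$ is $K_{t+1}$-free and $K_{t,t}$-free and $\tw(G)$ is enormous, $G$ contains a large wall with long subdivisions as a minor, and one fixes a tidy model of it in $G$, with branch sets realised by induced paths joined according to the wall pattern and the remaining vertices of $G$ attached to this ``wall gadget'' in some fashion. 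The whole remaining task is to convert this wall minor into an \emph{induced} subdivision of a wall, or an \emph{induced} line graph of a subdivision of a wall; either outcome is a $t$-basic obstruction and contradicts $t$-cleanness. What stands in the way is the ``dirt'': vertices with unhelpful adjacencies to the gadget.

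The engine driving the cleanup is the clock-free hypothesis read contrapositively: if a vertex $v$ has two non-adjacent neighbours on some hole avoiding $v$, then $G$ has a clock. The key move is routing. If a vertex $v$ is adjacent to two vertices $x, y$ lying at distance at least three along one of the long induced paths $P$ of the gadget, then --- because the wall is large --- one can route an induced path from $x$ to $y$ through the rest of the gadget avoiding $N(v)$, closing the subpath of $P$ between $x$ and $y$ into a hole on which $x$ and $y$ are non-adjacent; this is a clock, a contradiction. A similar routing argument pins down vertices attached near the degree-three corners of the wall. Hence every vertex of $G$ meets the gadget only inside bounded ``local'' regions, and meets each long path only on an interval of at most two consecutive vertices; and $K_{t,t}$-freeness bounds how many vertices can pile onto one local region (a large pile would be a large biclique). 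The upshot is a dichotomy: either this bounded local interference can be absorbed by rerouting and deleting, yielding an induced subdivision of a somewhat smaller wall and hence the desired contradiction, or we land in the ``line-graph regime'', where the local dirt consists of vertices each behaving like an edge of the wall.

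I expect the \emph{line-graph regime} to be the main obstacle; it is exactly the content not already covered by the theorem of \previousauthors, whose proof can be quoted when no prisms and pyramids occur. Here the local gadgets are prisms and pyramids, and the task is to show that clock-freeness forces them to be mutually aligned: two incompatible prism or pyramid gadgets sharing part of the wall would again let one route a hole exposing two non-adjacent neighbours of some vertex, i.e.\ a clock. Propagating this alignment across the whole gadget assembles the dirt into a large induced line graph of a subdivision of a wall --- a $t$-basic obstruction, the final contradiction. Equivalently: either $G$ contains a $(\text{clock}, \text{prism}, \text{pyramid})$-free induced subgraph of large treewidth (and the earlier theorem finishes), or the prisms and pyramids organise into a large induced line graph of a subdivided wall. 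The remaining difficulties I regard as bookkeeping rather than conceptual: handling the corners and the exact multiplicities carefully during the minor-to-induced conversion, the bootstrapping inherent in using ``dirt is local'' while proving it, and tracking the Ramsey and grid-minor bounds to extract an explicit $f$.
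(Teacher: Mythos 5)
The statement you were asked to prove is not proved in the paper at all: it is stated there as an open conjecture, strictly stronger than the paper's main theorem (a clock is exactly a $2$-clock, so clock-freeness is the \emph{strongest} of the hypotheses ``$t$-clock-free'', and only that case is settled). Your proposal does not close this gap, and in fact it silently proves the wrong statement: you begin ``suppose there is a clock-free, $t$-clean graph $G$'' and your entire engine is ``a vertex with two non-adjacent neighbours on a hole yields a clock''. For $t$-clock-free graphs with $t\geq 3$ this engine is simply unavailable: a vertex may have two (or many) pairwise non-adjacent neighbours on a hole as long as they all lie within hole-distance less than $t$ of each other, so the routing step that is supposed to clean a long path of the wall (and the alignment step in your ``line-graph regime'') produces no contradiction. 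Nothing in the proposal addresses how to handle vertices attached to a short window of a hole, which is precisely the new phenomenon that makes the conjecture open.

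Even restricted to the clock-free case, the plan does not match what can actually be carried out, and it is not the paper's route. The step ``since $G$ is $K_{t+1}$-free and $K_{t,t}$-free and $\tw(G)$ is enormous, fix a tidy wall model and absorb the bounded local dirt to extract an induced subdivision (or line graph of a subdivision) of a wall'' is exactly the assertion that fails in general: the Pohoata--Davies graphs, layered wheels and occultations cited in the introduction have huge treewidth, no big clique or biclique, and no induced wall-like obstruction, so there is no generic minor-to-induced cleanup to invoke; all the work lies in the hereditary hypothesis, and your sketch defers it to ``bookkeeping''. The paper instead proceeds through a completely different mechanism: star cutsets are converted to clique cutsets (Lemmas~\ref{startoclique} and~\ref{diamondlemma}) to reduce to diamond-free graphs with no star cutset, paws and seagulls are shown to force cutsets of the form ``a vertex plus a clique'' (Theorems~\ref{thm:paw} and~\ref{thm:seagull}), these are organised via the central bag method with loosely non-crossing canonical separations (Theorem~\ref{thm:noncrossing}), and the central bag is shown to satisfy $\gamma_3\leq t$ so that Theorem~\ref{smalldensecomps} plus balanced-separator lemmas yield bounded treewidth, which is then lifted back to $G$. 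If you want to attack the conjecture itself, the place to start is replacing the clock-based cutset arguments with ones tolerant of ``short-window'' attachments to holes; the wall-cleaning outline as written cannot do that.
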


\section{Definitions} \label{sec:defs}

We begin with some definitions that will be used throughout the paper. For ease of notation, we use graphs and their vertex sets interchangeably. Given a path $P$, we refer to its vertices of degree at most one as the \emph{ends} of $P$, and we denote by $P^*$ the \emph{interior} of $P$, that is, the set obtained from $P$ by deleting the ends of $P$. If $x, y$ are the ends of $P$, we also say that $P$ is a \emph{path from $x$ to $y$}. The \emph{length} of a path is its number of edges. A \emph{prism} is a graph consisting of two triangles with disjoint vertex sets $\{a_1, a_2, a_3\}$ and $\{b_1, b_2, b_3\}$, as well as three paths $P_1, P_2, P_3$ such that: 
\begin{itemize}
    \item $P_i$ has ends $a_i$ and $b_i$ for all $i \in \{1, 2, 3\}$; and
    \item for distinct $i, j \in \{1, 2, 3\}$, the only edges between $P_i$ and $P_j$ are the edges $a_ia_j$ and $b_ib_j$. 
\end{itemize}
A \emph{pyramid} is a graph consisting of a vertex $a$ (called the \emph{apex}) and a triangle with vertex set $\{b_1, b_2, b_3\}$ (called the \emph{base}), as well as three paths $P_1, P_2, P_3$ such that: 
\begin{itemize}
    \item $P_i$ has ends $a$ and $b_i$ for all $i \in \{1, 2, 3\}$; 
    \item for $i \in \{1, 2, 3\}$, each path $P_i$ has length at least one, and there is at most one $i \in \{1, 2, 3\}$ such that the path $P_i$ has length exactly one; and
    \item for distinct $i, j \in \{1, 2, 3\}$, the only edge between $P_i \setminus \{a\}$ and $P_j \setminus \{a\}$ is the edge $a_ia_j$. 
\end{itemize}
A \emph{short pyramid} is a pyramid in which one of $P_1, P_2, P_3$ has length exactly one. We observe: 
\begin{lemma}\label{lem:short}
    If $G$ is clock-free, then $G$ does not contain a short pyramid. 
\end{lemma}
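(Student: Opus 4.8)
The plan is to argue directly that every short pyramid contains a clock, which immediately gives the statement. So suppose $G$ contains a short pyramid, with apex $a$, base triangle $\{b_1,b_2,b_3\}$ and paths $P_1,P_2,P_3$ where $P_i$ has ends $a$ and $b_i$. By the definition of a short pyramid together with the clause in the definition of a pyramid forbidding two of $P_1,P_2,P_3$ to have length one, we may assume that $P_1$ has length exactly one, so that $ab_1\in E(G)$, while $P_2$ and $P_3$ each have length at least two. The clock I would exhibit has centre $b_1$ and hole $C$, where $C$ is the cycle obtained by following $P_2$ from $a$ to $b_2$, then the edge $b_2b_3$, then $P_3$ from $b_3$ back to $a$.

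First I would verify that $C$ is a hole. It is a cycle since $P_2$ and $P_3$ meet only in $a$, and its length is $\mathrm{length}(P_2)+\mathrm{length}(P_3)+1\geq 5$, so in particular at least four. It is induced: $P_2$ and $P_3$ are induced paths, the only edge of the pyramid between $V(P_2)\setminus\{a\}$ and $V(P_3)\setminus\{a\}$ is $b_2b_3$, and since $P_2$ and $P_3$ are induced paths with $a$ as an endpoint, $a$ has exactly one neighbour in $V(P_2)$ and exactly one in $V(P_3)$, namely its two neighbours along $C$.

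Next I would verify that $b_1$ is an eligible centre. We have $b_1\neq a$ and $b_1\notin\{b_2,b_3\}$, and $b_1$ is not an interior vertex of $P_2$ or of $P_3$, because $ab_1\in E(G)$ whereas $a$, being an end of the induced paths $P_2$ and $P_3$, has no neighbour in their interiors; hence $b_1\notin V(C)$. The neighbours of $b_1$ in $C$ include $a$ (as $ab_1\in E(G)$) and $b_2$ and $b_3$ (the base is a triangle). Finally $a$ and $b_2$ are non-adjacent in $G$: the two neighbours of $a$ on the induced cycle $C$ are the second vertex of $P_2$ and the second vertex of $P_3$, and the former is not $b_2$ because $P_2$ has length at least two, so $ab_2\notin E(G)$. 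Thus $a$ and $b_2$ are two non-adjacent neighbours of $b_1$ in the hole $C$, so $b_1$ together with $C$ is a clock, contrary to the hypothesis that $G$ is clock-free.

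I do not expect any genuine obstacle here. The only two points that need care are checking that $C$ is an induced cycle --- which is exactly what the condition on the unique edge between $V(P_i)\setminus\{a\}$ and $V(P_j)\setminus\{a\}$ in the pyramid definition is for --- and locating a non-adjacent pair among $\{a,b_2,b_3\}$, which is precisely where one uses that $P_2$ and $P_3$ have length at least two (equivalently, that a pyramid is required to have at least two paths of length at least two).
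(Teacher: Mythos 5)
Your proof is correct and takes essentially the same route as the paper, which simply observes that $Q \setminus \{b_1\}$ is a hole (your cycle $C$) in which $b_1$ has three neighbours including $a$ and $b_2$, non-adjacent because $P_2$ has length more than one. One small slip in your write-up: the justification that $b_1 \notin V(C)$ via ``$a$ has no neighbour in the interiors of $P_2$ and $P_3$'' is false as stated (the second vertex of $P_2$ is an interior vertex adjacent to $a$, as you yourself use later when identifying the neighbours of $a$ on $C$); the fact $b_1 \notin V(C)$ instead follows immediately from the paths of a pyramid meeting only at the apex, so the argument stands.
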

\begin{proof}
    Notice that a short pyramid $Q$ is a clock: with $P_1, P_2, P_3$ and $a, b_1, b_2, b_3$ as in the definition of a pyramid, let us assume that $P_1$ has length exactly one. Then $Q \setminus \{b_1\}$ is a hole, and $b_1$ has three neighbours in it, including $a$ and $b_2$, which are non-adjacent (since $P_2$ has length more than one). 
\end{proof}
A \emph{theta} is a graph consisting of two non-adjacent vertices $a$ and $b$ (called its \emph{ends}), as well as three paths $P_1, P_2, P_3$ such that: 
\begin{itemize}
    \item $P_i$ has ends $a$ and $b$ for all $i \in \{1, 2, 3\}$; and
    \item for distinct $i, j \in \{1, 2, 3\}$, there are no edges between $P^*_i$ and $P^*_j$. 
\end{itemize}
A graph is a \emph{three-path configuration} if it is a prism, a pyramid, or a theta. In each case, we refer to $P_1, P_2, P_3$ as the \emph{paths} of the three-path-configuration. 

For a graph $G$ and a vertex $v \in V(G)$, we write $N_G(v)$ for the set of neighbours of $v$ in $G$, omitting the subscript when there is no danger of confusion. We write $N_G[v]$ for the set $N_G(v) \cup \{v\}$. For $X \subseteq V(G)$, we write $N_G(X) = \bigcup_{x \in X} N_G(x) \setminus X$ and $N_G[X] = X \cup N_G(X)$. 

Given a graph $G$, a set $X \subseteq V(G)$ is a \emph{cutset} of $G$ if $G \setminus X$ is non-empty and not connected. A \emph{clique cutset} is a cutset which is a clique. A \emph{star cutset} is a cutset $X$ such that there is a vertex $x \in X$ with $X \subseteq N[x]$. 

Given $X, Y \subseteq V(G)$, we say that $X$ \emph{is anticomplete to} $Y$ if there is no edge in $G$ with one end in $X$ and the other in $Y$, and we say $x\in V(G)$ \textit{is anticomplete to} $Y$ if $\{x\}$ is anticomplete to $Y$. A \emph{separation} of $G$ is a triple $(A, C, B)$ of pairwise disjoint subsets of $V(G)$ with union $V(G)$ such that $A$ is anticomplete to $B$.

      \begin{figure}[t!]
        \centering
        \includegraphics[width=0.3\textwidth]{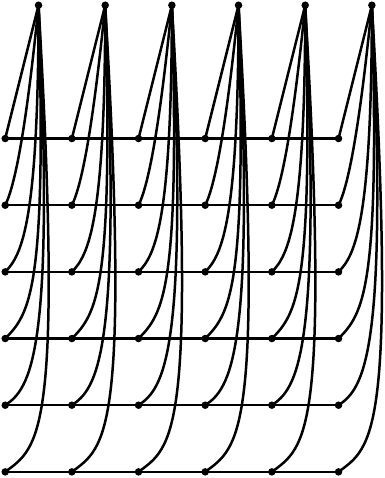}
        \caption{A graph from the Pohoata-Davies construction}
        \label{fig:davies}
    \end{figure} 

\section{Diamonds}\label{sec:diamond}

A \emph{diamond} is a four-vertex graph with exactly five edges. In this section, we use the following results from \cite{abrishami2022induced} to reduce Theorem~\ref{thm:main} to the diamond-free case. 

\begin{lemma}[\previousauthors]
  \label{startoclique}
  Let $G$ be a clock-free graph and let $(A,C,B)$ be a separation of $G$
  with $A \neq \emptyset$ and $B \neq \emptyset$. Suppose that there
  exist $v_1, \dots, v_k \in C$ such that $C \subseteq \bigcup_{i=1}^k N[v_i]$.
  Let $D_1$ be a component of $A$ and let $D_2$ be a component of $B$.
  Then there exist cliques $X_1, \dots, X_k \subseteq C$ of $G$ such that every path
  from a
  vertex in $D_1$ to a vertex in $D_2$ has a vertex in $\bigcup_{i=1}^k X_i$. In particular,
  if $G$ admits a star cutset, then $G$ admits a clique cutset.
\end{lemma}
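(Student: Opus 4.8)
My plan is to produce an explicit separator of $D_1$ from $D_2$ that lies inside $C$ and can be written as a union of $k$ cliques, one ``attached'' to each $v_i$, with clock-freeness entering only to show those pieces are cliques. First I would dispose of the case in which no path joins $D_1$ to $D_2$, where $X_1=\dots=X_k=\emptyset$ works; so assume such a path exists. Since $D_1$ is a component of $G[A]$ we have $N(D_1)\subseteq C$; let $\mathcal B$ be the component of $G\setminus N(D_1)$ that contains $D_2$, so $\mathcal B$ is connected, anticomplete to $D_1$, and contains $D_2$. Put $Z:=N(D_1)\cap N(\mathcal B)$. Then $Z$ separates $D_1$ from $D_2$: if $P$ is a $D_1$--$D_2$ path and $q$ is its last vertex in $N(D_1)$, then the part of $P$ after $q$ avoids $D_1\cup N(D_1)$ and ends in $D_2$, hence lies in $\mathcal B$, so the successor of $q$ on $P$ lies in $\mathcal B$ and therefore $q\in Z$. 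Since $Z\subseteq N(D_1)\subseteq C\subseteq\bigcup_i N[v_i]$, we have $Z=\bigcup_i(Z\cap N[v_i])$, so it suffices to show that each $Z\cap N[v_i]$ is a clique: then $X_i:=Z\cap N[v_i]$ does the job, as $\bigcup_i X_i=Z$ separates $D_1$ from $D_2$.

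The heart of the argument is the use of the clock. Suppose $u,u'\in Z\cap N[v_i]$ are non-adjacent. Since $u\not\sim u'$ and $u,u'\in N[v_i]$, neither of $u,u'$ equals $v_i$, so $u,u'\in N(v_i)$; and since $u,u'\in Z$, each of $u$ and $u'$ has a neighbour in the connected set $D_1$ and a neighbour in the connected set $\mathcal B$. Using connectivity of $D_1$, choose a shortest (hence induced) path $R$ from $u$ to $u'$ with $R^{*}\subseteq D_1$; as $u\not\sim u'$ it has length at least $2$. Likewise choose an induced path $S$ from $u$ to $u'$ with $S^{*}\subseteq\mathcal B$. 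Because $u,u'\notin D_1\cup\mathcal B$ and $D_1$ is anticomplete to $\mathcal B$, the union $R\cup S$ is a hole $H$ containing both $u$ and $u'$. Now $v_i\notin D_1$ (as $v_i\in C$) and $v_i\ne u,u'$; so if in addition $v_i\notin\mathcal B$, then $v_i\notin V(H)$, and since $v_i$ has the two non-adjacent neighbours $u,u'$ on $H$, the graph $G[V(H)\cup\{v_i\}]$ is a clock, contradicting that $G$ is clock-free.

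The remaining possibility is $v_i\in\mathcal B$, and this is where I expect the real difficulty to lie. One still wants to run the argument above, now choosing $S$ to \emph{avoid} $v_i$ --- which is possible, since $v_i$ is itself a common neighbour of $u$ and $u'$ in $\mathcal B$, unless $v_i$ is a cutvertex of $G[\mathcal B]$ separating the $\mathcal B$-neighbours of $u$ from those of $u'$ (for instance when one of $u,u'$ has $v_i$ as its only neighbour in $\mathcal B$). The way around this residual ``bottleneck'' configuration is an induction: replace $\mathcal B$ by the component of $G[\mathcal B\setminus\{v_i\}]$ containing $D_2$ and adjoin the single clique $\{v_i\}\subseteq N[v_i]$ to the separator, or dually absorb the offending vertices of $C$ into the $D_1$-side; either move strictly decreases $|V(G)\setminus D_1|$, and the care is in checking that the hypotheses $C\subseteq\bigcup_i N[v_i]$ and $X_i\subseteq C$ survive each step.

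Finally, the last sentence of the statement follows quickly. If $G$ is disconnected then $\emptyset$ is a clique cutset. Otherwise, let $S$ be a star cutset with $S\subseteq N[x]$, let $D_1$ be a component of $G\setminus S$, let $B:=V(G)\setminus(S\cup D_1)$ (nonempty, as $S$ is a cutset), and let $D_2$ be a component of $G[B]$. Then $(D_1,S,B)$ is a separation, and applying the main part to it with $k=1$ and $v_1=x$ (valid because $S\subseteq N[x]$) gives a clique $X_1\subseteq S$ met by every $D_1$--$D_2$ path. Since $G$ is connected there is such a path, so $X_1\ne\emptyset$; and no $D_1$--$D_2$ path survives in $G\setminus X_1$, so $D_1$ and $D_2$ lie in distinct components of $G\setminus X_1$. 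Hence $X_1$ is a clique cutset of $G$.
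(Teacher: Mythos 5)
You cannot be checked against an in-paper argument here, since Lemma~\ref{startoclique} is imported from \cite{abrishami2022induced} and this paper gives no proof of it; so I assess your proposal on its own. The first part (defining $\mathcal B$, showing $Z=N(D_1)\cap N(\mathcal B)$ separates $D_1$ from $D_2$, and the clock argument when $v_i\notin\mathcal B$) is correct, and so is your final paragraph deducing the clique-cutset statement. But the case $v_i\in\mathcal B$, which you yourself flag, is a genuine gap and not a corner case: your proposed cliques $X_i=Z\cap N[v_i]$ can simply fail to be cliques there. Concretely, take $V(G)=\{d,u,u',v,b\}$ with edges $du,du',uv,u'v,vb$, and $A=\{d\}$, $C=\{u,u',v\}$, $B=\{b\}$, $k=1$, $v_1=v$, $D_1=\{d\}$, $D_2=\{b\}$. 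This graph is clock-free (its unique hole is $d\mh u\mh v\mh u'\mh d$, and $b$ has only one neighbour in it), yet $\mathcal B=\{v,b\}$, $Z=\{u,u'\}$, and $Z\cap N[v_1]=\{u,u'\}$ is not a clique; the lemma's conclusion holds only via the different choice $X_1=\{v\}$. Your patch for this case (``adjoin $\{v_i\}$ and recurse, or absorb vertices into the $D_1$-side'') is only a gesture: it is not set up as a well-defined induction (the modified configuration is not an instance of the lemma's hypotheses; deleting one $v_i$ changes $\mathcal B$, hence $Z$ and the analysis at every other index), and you never verify that the vertices of the new separator lying only in $N[v_i]$ can still be covered by the allotted $k$ cliques.

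The good news is that your framework can be repaired without any induction, by excising all of $v_1,\dots,v_k$ from the big side at once. Let $W=\{v_1,\dots,v_k\}$, let $\mathcal B^{*}$ be the component of $G\setminus(N(D_1)\cup W)$ containing $D_2$, and let $Z^{*}=(N(D_1)\cup W)\cap N(\mathcal B^{*})$. Your ``last vertex of $P$ in the deleted set'' argument shows verbatim that $Z^{*}$ separates $D_1$ from $D_2$, and $Z^{*}\subseteq C$. Now put each $v_j\in Z^{*}$ into $X_j$, and assign every other $z\in Z^{*}$ to some $X_j$ with $z\in N(v_j)$ (possible since $Z^{*}\subseteq C\subseteq\bigcup_j N[v_j]$). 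If $u,u'\in X_i$ are non-adjacent, then neither is any $v_j$, so both lie in $N(D_1)\cap N(\mathcal B^{*})\cap N(v_i)$; $D_1$ and $\mathcal B^{*}$ are distinct components of $G\setminus(N(D_1)\cup W)$, hence disjoint and anticomplete, so your two induced paths through $D_1$ and through $\mathcal B^{*}$ form a hole which now automatically avoids $v_i$ (because $\mathcal B^{*}$ avoids all of $W$), and $v_i$ has the two non-adjacent neighbours $u,u'$ on it, a clock. This is exactly your clique argument, but with the big side shrunk once and for all so that the troublesome case never arises.
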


\begin{lemma}[\previousauthors]
  \label{diamondlemma}
  Let $G$ be a clock-free graph  and assume that $G$ does not admit a star cutset.  Then $G$ is diamond-free. 
\end{lemma}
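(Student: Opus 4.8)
The plan is to argue by contradiction. Suppose $G$ is clock-free, admits no star cutset, and nonetheless contains a diamond. Since a diamond is $K_4$ with one edge deleted, we may label its four (distinct) vertices $a,b,c,d$ so that $bd\notin E(G)$; then $a$ and $c$ are adjacent and each of $b,d$ is adjacent to both $a$ and $c$. The whole argument revolves around the set
\[
Z \;:=\; N[a]\setminus\{b,d\}.
\]
Observe that $a\in Z$ and $c\in Z$ (since $a,c\in N[a]$ and $a,c\notin\{b,d\}$), while $b\notin Z$ and $d\notin Z$; and of course $Z\subseteq N[a]$, so whenever $Z$ turns out to be a cutset of $G$ it is automatically a star cutset centred at $a$.

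First I would ask whether $b$ and $d$ lie in the same component of $G\setminus Z$. If they do not, then $Z$ separates $b$ from $d$; since $c\in Z$ the set $Z$ is non-empty, since $b,d\notin Z$ the graph $G\setminus Z$ is non-empty, and hence $Z$ is a cutset, which is a star cutset — contradicting our assumption. So we may assume $b$ and $d$ lie in a common component of $G\setminus Z$, and we let $P$ be a shortest (hence induced) path from $b$ to $d$ inside $G\setminus Z$.

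It remains to extract a clock. Since $bd\notin E(G)$ the path $P$ has length at least two, so its interior $P^*$ is non-empty; and since $V(P)\cap Z=\emptyset$ while $P^*$ avoids $\{b,d\}$, we get $P^*\cap N[a]=\emptyset$, i.e.\ $a$ is anticomplete to $P^*$ (and $a\notin V(P)$). As $a$ is adjacent to both ends $b,d$ of $P$ and to nothing else on $P$, the graph $H:=G[V(P)\cup\{a\}]$ is a hole (an induced cycle on $|V(P)|+1\ge 4$ vertices). Finally $c\notin V(H)$, because $c\in Z$ and $c\ne a$, whereas $c$ is adjacent to $b$ and to $d$, which are two non-adjacent vertices of $H$; hence $c$ together with $H$ forms a clock in $G$, contradicting clock-freeness. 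This finishes the proof.

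The argument is short, and I do not anticipate a deep obstacle; the one place that genuinely needs care is the choice of the deleted set $Z$. It must be small enough to lie inside a single closed neighbourhood, so that a $b$--$d$ separation immediately yields a star cutset; yet it must retain $c$ (to serve as the clock centre of the hole we build) and must contain every neighbour of $a$ other than $b$ and $d$ (so that whatever $b$--$d$ path survives closes up with $a$ into an induced hole). The set $N[a]\setminus\{b,d\}$ is precisely what does all three jobs at once, which is why no further structural input — in particular none of the separation-to-clique machinery — is needed here.
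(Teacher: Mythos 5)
Your proof is correct. The paper does not prove this lemma itself---it is quoted from the earlier paper in the series \cite{abrishami2022induced}---but your argument is the natural one and matches the standard proof: delete $Z=N[a]\setminus\{b,d\}$; if $Z$ separates $b$ from $d$ it is a star cutset centred at $a$, and otherwise a shortest surviving $b$--$d$ path together with $a$ forms a hole in which $c$ has the two non-adjacent neighbours $b$ and $d$, yielding a clock.
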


Combining Lemmas \ref{startoclique} and \ref{diamondlemma}, we conclude that every clock-free graph which contains a diamond has a clique cutset; and moreover, every clock-free graph which contains a star cutset has a clique cutset. Since clique cutsets do not affect treewidth (see Lemma 7 in \cite{bodlaender2006safe}), to prove Theorem \ref{thm:main}, it now suffices to prove the following: 

\begin{theorem}\label{thm:mainnew}
The family of (clock, diamond)-free graphs with no star cutset is clean. 
\end{theorem}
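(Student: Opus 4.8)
\textbf{Proof proposal for Theorem~\ref{thm:mainnew}.}

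The plan is to follow the now-standard ``non-basic obstructions'' strategy developed in earlier papers of this series, specialised to the setting where clocks, and hence (by Lemma~\ref{lem:short}) short pyramids, are forbidden. First I would set up the reduction to a structured situation: assume for contradiction that there is a $t$ for which $(\text{clock},\text{diamond})$-free graphs with no star cutset and no $t$-basic obstruction have unbounded treewidth, fix a large enough ``blow-up'' parameter, and pass to a graph $G$ in the class with treewidth much larger than any function of $t$ we will need. Using the standard toolkit connecting large treewidth to large ``walls'' or to rich systems of paths — specifically, the fact that a graph of large treewidth with bounded clique number contains either a large complete bipartite minor model or, after excluding $K_{t,t}$ and $K_{t+1}$, a large wall-type substructure with controlled attachments — one extracts from $G$ a robust configuration of many internally disjoint long paths whose ends are organised along a grid. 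The diamond-freeness and the absence of a star cutset are exactly the hypotheses that let one ``clean'' this configuration: every vertex outside the configuration has a very restricted neighbourhood on it, because a vertex with three pairwise close or two suitably placed neighbours on a long induced path would create a clock or a diamond.

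The core of the argument is then a case analysis on how the extracted path system closes up. In a clock-free, diamond-free graph with no star cutset, the only ways three long paths with common endpoints can interact is as a theta, a prism, or a (non-short) pyramid — a three-path configuration in the terminology of Section~\ref{sec:defs} — and short pyramids are already excluded by Lemma~\ref{lem:short}. So the heart of the proof is to show that a clock-free, diamond-free graph with no star cutset that contains arbitrarily large ``grids'' of such three-path configurations must in fact contain a subdivided wall or the line graph of one as an induced subgraph, contradicting $t$-cleanness. Here I would argue that the grid structure forces many of these configurations to line up coherently: the forbidden clock, applied to a hole through two of the paths together with an interior vertex of a third path or of a crossing path, severely limits how the ``rungs'' of the grid can attach, and pushing this through one shows the induced subgraph we have built is, up to subdivision, either a wall (the theta/pyramid-dominated case) or a line graph of a wall (the prism-dominated case).

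The main obstacle, as in all papers of this series, will be the ``cleaning'' and bookkeeping needed to go from ``$G$ has large treewidth'' to ``$G$ contains a large, well-behaved three-path-configuration grid with no unexpected chords or jumps.'' Concretely: (i) controlling vertices of $G$ that have neighbours on several paths of the configuration simultaneously — these must be ruled out or absorbed, using that two non-adjacent such neighbours on one path give a clock; (ii) ensuring the configuration is \emph{induced}, i.e.\ handling chords within and between paths, where diamond-freeness and the clock obstruction do most of the work but the case distinctions (theta vs.\ prism vs.\ pyramid, and short vs.\ long paths) proliferate; and (iii) the final combinatorial step identifying the wall or line-graph-of-wall inside the cleaned grid, which requires a careful choice of which subpaths to keep so that parity and length conditions in the definitions of ``subdivision of a wall'' and ``line graph of a subdivision of a wall'' are met. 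I expect (i) and (ii) together to constitute the bulk of the technical work, with the absence of a star cutset being invoked repeatedly to propagate local structure globally (as in Lemma~\ref{startoclique}), and the clock-freeness being the key new leverage beyond what was available in \previousauthors.
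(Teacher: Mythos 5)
There is a genuine gap, and it sits exactly where you defer the work: in the very first step. You propose to extract from a large-treewidth, $K_{t+1}$-free, $K_{t,t}$-free graph ``a large wall-type substructure with controlled attachments,'' i.e.\ a grid-like system of internally disjoint paths, and then to ``clean'' it into an induced wall or line graph of a wall using clock- and diamond-freeness. No such extraction theorem exists in the induced-subgraph setting; the standard toolkit (grid/wall theorems, bramble and tangle arguments) produces walls only as minors or subgraphs, and the passage from a wall subgraph to an induced basic obstruction is precisely the open difficulty that the layered wheels and occultation constructions show cannot be made in general. Since your extraction step does not use clock-freeness or the absence of star cutsets at all, it cannot be valid as stated, and the subsequent ``cleaning'' of chords and attachments is not a bounded amount of bookkeeping: it is essentially the whole theorem, restated. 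In short, the proposal assumes a structured induced configuration whose existence is what needs to be proved.

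The paper's route is different in kind and avoids this issue entirely: it never tries to build a wall inside $G$ directly. Instead it shows (Theorems~\ref{thm:paw} and~\ref{thm:seagull}) that paws and seagulls at claw centers force cutsets of the form ``a vertex plus a clique,'' verifies that the resulting canonical separations are loosely non-crossing (Theorem~\ref{thm:noncrossing}), and decomposes $G$ simultaneously along all of them via the central bag method, adding marker paths so that the central bag $\beta^*$ records the deleted components. The payoff is Lemma~\ref{lem:centraldegree}: in $\beta^*$ every vertex has at most a clique plus a bounded number of high-degree neighbours, so $\gamma_3(\beta^*)\leq t$, and the black-box Theorem~\ref{smalldensecomps} from the previous paper then bounds $\tw(\beta^*)$ for $t$-clean graphs. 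Bounded treewidth of $\beta^*$ gives small balanced separators (Lemma~\ref{lem:sepbeta}), which are lifted back to $G$ (Theorem~\ref{thm:finale}) and converted to a treewidth bound by Lemma~\ref{lemma:bs-to-tw}. If you want to salvage your plan, the missing idea to supply is exactly this cutset-and-central-bag reduction (or some substitute mechanism that bounds the ``dense part'' of $G$ before any wall is sought), rather than a direct extraction-plus-cleaning argument.
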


We conclude with a simple observation about diamond-free graphs: 
\begin{lemma}\label{lem:diamond}
    Let $G$ be diamond-free. Then the following hold: 
    \begin{itemize}
        \item For every $v \in V(G)$, the graph $G[N(v)]$ is a disjoint union of cliques pairwise anticomplete to each other. 
        \item For every edge $xy \in E(G)$, there is exactly one maximal clique of $G$ containing $\{x, y\}$.
    \end{itemize}
\end{lemma}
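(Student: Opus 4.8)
The plan is to treat the two bullets separately; each is a one-line case analysis that produces a diamond whenever the stated conclusion fails.

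For the first bullet, I would show that $G[N(v)]$ contains no induced path on three vertices. Indeed, if $a,b,c\in N(v)$ satisfy $ab,bc\in E(G)$ and $ac\notin E(G)$, then $G[\{v,a,b,c\}]$ has exactly the five edges $va,vb,vc,ab,bc$, hence is a diamond, contrary to hypothesis. A finite graph with no induced three-vertex path is precisely a disjoint union of cliques, and in such a graph the cliques are exactly the connected components and so are pairwise anticomplete; this gives the first claim.

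For the second bullet, existence of a maximal clique containing a given edge $xy$ is immediate from finiteness of $G$: extend $\{x,y\}$ greedily to a maximal clique. For uniqueness, suppose $K_1\neq K_2$ are maximal cliques with $\{x,y\}\subseteq K_1\cap K_2$, and pick, without loss of generality, $a\in K_1\setminus K_2$. Since $K_2$ is maximal and $a\notin K_2$, the set $K_2\cup\{a\}$ is not a clique, so some $b\in K_2$ is non-adjacent to $a$; as $a$ is adjacent to both $x$ and $y$ (all three lie in the clique $K_1$), we get $b\notin\{x,y\}$, and hence $b$ is adjacent to both $x$ and $y$ (as $b,x,y\in K_2$). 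Then $G[\{x,y,a,b\}]$ has exactly the edges $xy,xa,ya,xb,yb$, i.e.\ is a diamond, a contradiction; so $K_1=K_2$.

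There is no real obstacle here: both parts are routine. The only point worth flagging is, in the uniqueness argument, confirming that the witness $b$ for non-adjacency to $a$ is distinct from $x$ and $y$, so that $\{x,y,a,b\}$ genuinely has four vertices — this is exactly why one records that $a$ is adjacent to $x$ and to $y$. (Alternatively, uniqueness can be deduced from the first bullet: $a,b,y\in N(x)$ with $ay,by\in E(G)$ forces $ab\in E(G)$, contradicting the choice of $b$; but the direct argument above needs nothing beyond the definition of a diamond.)
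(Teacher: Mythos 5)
Your proof is correct and follows essentially the same route as the paper: the first bullet is exactly the paper's observation that $G[N(v)]$ contains no induced two-edge path, and your uniqueness argument for the second bullet amounts to the paper's observation that $N(x)\cap N(y)$ is a clique (your witnesses $a,b$ are precisely a non-adjacent pair of common neighbours of $x$ and $y$, yielding a diamond). The paper merely leaves these details to the reader; you have filled them in correctly.
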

\begin{proof}
    The first bullet point follows from observing that $G[N(v)]$ does not contain an induced two-edge path; the second follows by observing that $N(x) \cap N(y)$ is a clique. 
\end{proof}

\section{Paws and seagulls}

A \emph{paw} is a graph with vertex set $\{a, a', u, v\}$ and edge set $\{aa', av, a'v, uv\}$. A \emph{seagull} is a graph with vertex set $\{a, u, v\}$ and edge set $\{au, av\}$. 

Our goal in this section is to show that paws and seagulls give rise to particularly nice cutsets in (clock, diamond)-free graphs. To this end, we first show that paws and seagulls are contained in some three-path configuration in a prescribed way; then we show that choosing the right one of these three-path configurations indicates the location of the cutset we are looking for. 

We require the following folklore result that appeared, for example, in \cite{abrishami2021submodular}:

\begin{lemma}\label{minimalconnected}
Let $x_1, x_2, x_3$ be three distinct vertices of a graph $G$. Assume that $H$ is a connected induced subgraph of $G \setminus \{x_1, x_2, x_3\}$ such that $V(H)$ contains at least one neighbour of each of $x_1$, $x_2$, $x_3$, and that $V(H)$ is minimal subject to inclusion. Then, one of the following holds:
\begin{enumerate}[(i)]
\item For some distinct $i,j,k \in  \{1,2,3\}$, there exists $P$ that is either a path from $x_i$ to $x_j$ or a hole containing the edge $x_ix_j$ such that
\begin{itemize}
\item $V(H) = V(P) \setminus \{x_i,x_j\}$, and
\item either $x_k$ has two non-adjacent neighbours in $H$ or $x_k$ has exactly two neighbours in $H$ and its neighbours in $H$ are adjacent.
\end{itemize}

\item There exists a vertex $a \in V(H)$ and three paths $P_1, P_2, P_3$, where $P_i$ is from $a$ to $x_i$, such that 
\begin{itemize}
\item $V(H) = (V(P_1) \cup V(P_2) \cup V(P_3)) \setminus \{x_1, x_2, x_3\}$, and 
\item the sets $V(P_1) \setminus \{a\}$, $V(P_2) \setminus \{a\}$ and $V(P_3) \setminus \{a\}$ are pairwise disjoint, and
\item for distinct $i,j \in \{1,2,3\}$, there are no edges between $V(P_i) \setminus \{a\}$ and $V(P_j) \setminus \{a\}$, except possibly $x_ix_j$.
\end{itemize}

\item There exists a triangle $a_1a_2a_3$ in $H$ and three paths $P_1, P_2, P_3$, where $P_i$ is from $a_i$ to $x_i$, such that
\begin{itemize}
\item $V(H) = (V(P_1) \cup V(P_2) \cup V(P_3)) \setminus \{x_1, x_2, x_3\} $, and 
\item the sets $V(P_1)$, $V(P_2)$ and $V(P_3)$ are pairwise disjoint, and
\item for distinct $i,j \in \{1,2,3\}$, there are no edges between $V(P_i)$ and $V(P_j)$, except $a_ia_j$ and possibly $x_ix_j$.
\end{itemize}
\end{enumerate}
\end{lemma}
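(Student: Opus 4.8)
The plan is to extract all the structure from the minimality of $V(H)$. For $i\in\{1,2,3\}$ write $A_i=N_G(x_i)\cap V(H)$; this is nonempty by hypothesis. The key consequence of minimality is: \emph{for every $v\in V(H)$, if $H\setminus v$ is connected then $A_i=\{v\}$ for some $i$} --- otherwise $H\setminus v$ would be a strictly smaller connected induced subgraph of $G\setminus\{x_1,x_2,x_3\}$ still containing a neighbour of each $x_i$, contradicting minimality. (Equivalently, after choosing $a_i\in A_i$ for each $i$, the graph $H$ is a vertex-minimal connected induced subgraph of $G$ containing $\{a_1,a_2,a_3\}$, since any connected subgraph of $H$ through those three vertices already meets every $A_i$.) In particular, every non-cutvertex of $H$ is the unique neighbour in $H$ of some $x_i$; moreover two distinct non-cutvertices cannot both be the unique neighbour of the \emph{same} $x_i$, since that would force $A_i$ to be two different singletons. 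As every connected graph on at least two vertices has at least two non-cutvertices, this shows (setting aside the trivial case $|V(H)|=1$) that $H$ has either two or three non-cutvertices, and that they inject into $\{x_1,x_2,x_3\}$.

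The next step is to classify the connected graphs $H$ having at most three non-cutvertices, which I would do via the block--cutvertex tree $\mathcal B$ of $H$. Each leaf of $\mathcal B$ is an end-block containing at least one non-cutvertex of $H$, so $\mathcal B$ has at most three leaves. A short accounting of how many non-cutvertices of $H$ a block contributes --- a block on $k$ vertices contributes $k-1$ of them if it is an end-block, at most $k-2$ if it is internal, and all $k$ if $H$ consists of a single block, and an internal block with $a$ cutvertices on it also accounts for $a$ further non-cutvertices in the pieces hanging off it --- rules out both a $2$-connected block on four or more vertices (in particular the diamond and every induced cycle of length at least $4$) and two distinct triangle blocks. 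Hence every block of $H$ is an edge, with at most one exception that is a triangle; together with the bound on the number of leaves of $\mathcal B$ this forces $H$ to be exactly one of: (a) a path; (b) a subdivision of $K_{1,3}$ with a single branch vertex $c$; or (c) a triangle with a (possibly trivial) pendant path attached at each of its three corners.

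Finally I would match each of (a), (b), (c) to the desired conclusion, using three facts: the two or three non-cutvertices of $H$ are ``private'' neighbours of the $x_i$; this correspondence is injective, hence a bijection when there are three of them; and any $x_i$ whose private neighbour is a non-cutvertex of $H$ has no other neighbour in $H$. In case (b), taking $a=c$ and letting $P_i$ be the $i$-th leg of the claw extended by the edge to $x_i$ gives outcome (ii). In case (c), taking the $a_i$ to be the corners of the triangle and $P_i$ the $i$-th pendant path extended to $x_i$ gives outcome (iii). In case (a), write $H=h_0\cc h_n$ with (after relabelling) $A_1=\{h_0\}$ and $A_2=\{h_n\}$; now if $x_3$ has a unique neighbour $h_r$ in $H$, then splitting the path at $h_r$ yields outcome (ii) with centre $a=h_r$, and otherwise $x_3$ has at least two neighbours on the path $H$, so either two of them are non-adjacent or there are exactly two and they are consecutive, which yields outcome (i) with $P$ the path --- or the hole, when $x_1x_2\in E(G)$ --- on vertex set $\{x_1\}\cup V(H)\cup\{x_2\}$. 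Along the way one must separately handle $|V(H)|=1$ (which is outcome (ii) with all three paths of length one) as well as the degenerate possibilities of legs or pendant paths of length zero and of two of the $x_i$ sharing their unique neighbour in $H$.

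I expect the technical heart --- and the main obstacle --- to be the structural classification in the second step: turning ``at most three non-cutvertices'' into the clean path / subdivided-$K_{1,3}$ / triangle-with-pendant-paths trichotomy requires carefully eliminating every competing configuration. The first and third steps are essentially bookkeeping, though the third still requires the sub-case split on the neighbourhood of the third vertex in order to decide between outcomes (i) and (ii) in the path case.
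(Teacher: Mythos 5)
The paper itself contains no proof of Lemma~\ref{minimalconnected}; it is quoted as a folklore result from the literature, so there is no in-paper argument to compare yours against. Judged on its own, your proof is correct and follows the natural (and standard) route: minimality gives that every vertex whose deletion leaves $H$ connected is the unique attachment of some $x_i$, hence the non-cutvertices of $H$ inject into $\{x_1,x_2,x_3\}$; the block--cutvertex analysis then yields the path / subdivided-claw / triangle-with-pendant-paths trichotomy, and your case matching (in particular the observation that in the three-tip cases the injection is a bijection, forcing each $A_i=N_G(x_i)\cap V(H)$ to be a singleton tip, and the sub-case split on $|A_3|$ in the path case that decides between outcomes (i) and (ii)) lands in the stated outcomes, including the degenerate configurations you flag. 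Two points deserve care in a full write-up. First, in the block accounting what you need are lower bounds: an internal block on $k$ vertices containing $a$ cutvertices of $H$ contributes exactly $k-a$ non-cutvertices, and its $a$ branches contribute at least $a$ more (each contains an end-block), so such a block forces at least $k$ non-cutvertices; together with the end-block count this is what rules out any $2$-connected block on four or more vertices and any second triangle block --- your phrase ``at most $k-2$ if it is internal'' is an upper bound and does not by itself do this work. Second, in the path case one should say explicitly that injectivity guarantees the two ends of $H$ are private neighbours of two \emph{distinct} vertices among $x_1,x_2,x_3$; this is exactly what makes $\{x_1\}\cup V(H)\cup\{x_2\}$ induce a path, or a hole when $x_1x_2\in E(G)$ (of length at least four since $|V(H)|\ge 2$ in this case), as required by outcome (i).
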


We use Lemma \ref{minimalconnected} to find three-path-configurations, as follows: 
\begin{lemma}\label{lem:3pc}
    Let $G$ be a (clock, diamond)-free graph with no star cutset. Let $v \in V(G)$, and let $x_1, x_2, x_3 \in N(v)$. If $\{x_1, x_2, x_3\}$ is not a clique of $G$, then $G$ contains a three-path-configuration $Q$ with $v, x_1, x_2, x_3 \in Q$. 
\end{lemma}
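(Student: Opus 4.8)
\emph{Proposed approach.} The plan is to produce the three paths from Lemma~\ref{minimalconnected}, applied with $x_1,x_2,x_3$ as the distinguished triple, and to recover $v$ at the very end. The key observation is that one must \emph{not} apply Lemma~\ref{minimalconnected} to $G$ itself: since $v$ is adjacent to all of $x_1,x_2,x_3$, the singleton $\{v\}$ would be a legitimate (and useless) choice of the connected subgraph $H$ there, yielding a degenerate conclusion. So I would instead apply it to $G':=G\setminus(N[v]\setminus\{x_1,x_2,x_3\})$, which does not contain $v$; note that $G'\setminus\{x_1,x_2,x_3\}=G\setminus N[v]$, and that in $G$ the vertex $v$ is anticomplete to $V(G)\setminus N[v]$, so when $v$ is re-attached to a subgraph of $G'$ its only new edges are among $vx_1,vx_2,vx_3$.

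\emph{Preliminaries.} First, since $G$ is diamond-free and $\{x_1,x_2,x_3\}$ is not a clique, $\{v,x_1,x_2,x_3\}$ is not a diamond, so at most one of $x_1x_2,x_1x_3,x_2x_3$ is an edge. Second, Lemma~\ref{minimalconnected} genuinely applies to $G'$: if $V(G)=N[v]$ then $G\setminus\{v\}=G[N(v)]$ is, by Lemma~\ref{lem:diamond}, a disjoint union of pairwise anticomplete cliques with at least two components (as $\{x_1,x_2,x_3\}$ is not a clique), so $\{v\}$ is a star cutset, a contradiction; hence $G\setminus N[v]\neq\emptyset$. Moreover, for each $i$ the set $N[v]\setminus\{x_i\}$ contains $v$ and lies in $N[v]$, hence is a star cutset whenever it is a cutset; since $G$ has none, $(G\setminus N[v])\cup\{x_i\}=G\setminus(N[v]\setminus\{x_i\})$ is connected, forcing $x_i$ to have a neighbour in every component of $G\setminus N[v]$. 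Thus any component $D$ of $G\setminus N[v]$ is a connected induced subgraph of $G'\setminus\{x_1,x_2,x_3\}$ with a neighbour of each of $x_1,x_2,x_3$, so a minimal such subgraph exists.

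\emph{Re-attaching $v$ in outcomes (ii) and (iii).} Running Lemma~\ref{minimalconnected} on $G'$ and $x_1,x_2,x_3$ returns a configuration inside $G'\subseteq G$; I would adjoin $v$ with its edges $vx_1,vx_2,vx_3$. In outcome~(ii), with a vertex $a$ and paths $P_i$ from $a$ to $x_i$, concatenating the edge $vx_i$ with $P_i$ gives three internally disjoint paths from $v$ to $a$ through $x_1,x_2,x_3$; since $v\not\sim a$ and $v$ is anticomplete to $V(G)\setminus N[v]$, this is a theta with ends $v$ and $a$ when no $x_ix_j$ is an edge, and a pyramid with apex $a$ and base $\{v,x_i,x_j\}$ when $x_ix_j$ is the unique such edge. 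In outcome~(iii), with a triangle $a_1a_2a_3$ and paths $P_i$ from $a_i$ to $x_i$, the same re-attachment gives a pyramid with apex $v$ and base $\{a_1,a_2,a_3\}$ when no $x_ix_j$ is an edge, and a prism on the triangles $\{a_1,a_2,a_3\}$ and $\{v,x_i,x_j\}$ when $x_ix_j$ is an edge. In each case $v,x_1,x_2,x_3$ lie in the configuration; the work is to verify, using the anticompleteness of $v$ to $V(G)\setminus N[v]$ and the at-most-one-edge bound, that no spurious edges are present (in particular that two length-one paths never coincide, which is again ruled out by diamond-freeness).

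\emph{Outcome (i), and the main obstacle.} Here $V(H)=V(P)\setminus\{x_i,x_j\}$ for a path $P$ from $x_i$ to $x_j$ or a hole $P$ through $x_ix_j$, and $x_k$ has at least two neighbours in $V(H)$. If $x_ix_j\notin E(G)$, then $C:=v\text{-}x_i\text{-}P\text{-}x_j\text{-}v$ is a hole on which $x_k$ has $v$ together with its (at least two) neighbours in $V(H)$, so $(C,x_k)$ is a wheel, hence a clock, which is impossible. If $x_ix_j\in E(G)$, let $C$ be $P$ (if it is a hole) or $P$ together with the edge $x_ix_j$; using that $x_k\not\sim x_i,x_j$ and that $v$ is anticomplete to $V(H)$, both $v$ and $x_k$ have exactly two neighbours on $C$, and these two pairs are disjoint edges of $C$; since $v\sim x_k$, the two arcs into which $C$ is cut by these edges, together with the edge $vx_k$, form a prism on $\{v,x_i,x_j\}$ and $\{x_k\}\cup N_C(x_k)$ containing $v,x_1,x_2,x_3$. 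I expect the main obstacle to be this last prism construction together with the (routine but lengthy) verification in outcomes~(ii)/(iii) that the ``obvious'' subgraph one writes down is an induced three-path configuration; conceptually, however, everything is forced once one applies Lemma~\ref{minimalconnected} to $G'$ rather than to $G$.
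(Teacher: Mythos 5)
Your proposal is correct and takes essentially the same route as the paper: use the no-star-cutset hypothesis to find a component of $G\setminus N[v]$ attaching to all of $x_1,x_2,x_3$, take an inclusion-minimal connected subgraph, apply Lemma~\ref{minimalconnected}, reattach $v$ in outcomes (ii) and (iii), and dispatch outcome (i) using clock- and diamond-freeness. The only cosmetic differences are that the paper applies Lemma~\ref{minimalconnected} in $G$ with $H$ chosen inside $G\setminus N[v]$ rather than passing to your $G'$ (equivalent, since the relevant induced subgraph is the same), and that in outcome (i) with $x_ix_j\notin E(G)$ the paper simply exhibits the (short) pyramid $P\cup\{v,x_k\}$ whereas you derive a contradiction via a wheel -- both handle that case correctly.
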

\begin{proof}
    Since $G$ is diamond-free and  $G[\{v, x_1, x_2, x_3\}]$ is not isomorphic to $K_4$, we conclude that $G[\{x_1, x_2, x_3\}]$ contains at most one edge. 
    
    Since $N[v] \setminus \{x_1, x_2, x_3\}$ is not a star cutset in $G$, it follows that $V(G) \setminus N[v] \neq \emptyset$. Since $N[v]$ is not a star cutset in $G$, it follows that $D = G \setminus N[v]$ is connected (and non-empty). Since $\{v\} \cup N(D)$ is not a star cutset in $G$, we conclude that $N(D) = N(v)$ and so $x_1, x_2, x_3 \in N(D)$.

    Let $H$ be a minimal induced subgraph of $D$ such that $\{x_1, x_2, x_3\} \subseteq N(H)$. Then $H$ satisfies one of the outcomes of Lemma \ref{minimalconnected}. Note that outcomes (ii) and (iii) of Lemma \ref{minimalconnected} give us the desired three-path configuration. In the case of outcome (i), with $i, j, k$ and $P$ as in Lemma \ref{minimalconnected}, either $P$ or $P\cup \{v\}$ is a hole in $G$; and since $G$ is clock-free, it follows that $x_k$ does not have two non-adjacent neighbours in $P$. Therefore, $x_k$ has exactly two neighbours in $P$, and they are adjacent. Since $G$ is diamond-free, and $x_k$ has a neighbour in $P$, it follows that $x_k$ is non-adjacent to $x_i, x_j$. Therefore, $P \cup \{v, x_k\}$ is a pyramid (if $x_ix_j \not\in E(G)$) or a prism (if $x_ix_j \in E(G)$) in $G$. This completes the proof.
\end{proof}

For disjoint sets $X, Y, Z \subseteq V(G)$, we say that $X$ \emph{separates} $Y$ from $Z$ if every path $P$ with one end in $Y$ and the other in $Z$ satisfies $P \cap X \neq \emptyset$. 

\begin{theorem}\label{thm:paw}
    Let $G$ be a (clock, diamond)-free graph with no star cutset. Suppose that $G$ contains a paw; and let $a, a', u,v \in V(G)$ be as in the definition of a paw. Then there is a vertex $b$ in $G \setminus N(a)$ and a clique $K \subseteq N[b]$ such that $\{v\} \cup K$ separates $\{u\}$ from $\{a, a'\}$. 
\end{theorem}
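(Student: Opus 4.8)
The plan is to use Lemma~\ref{lem:3pc} applied to the vertex $v$ and its three neighbours $a, a', u$ (which do not form a clique, since $au, a'u \notin E(G)$), to extract a three-path-configuration $Q$ with $v, a, a', u \in Q$. Since $aa' \in E(G)$ but $a, a'$ are non-adjacent to $u$, the configuration $Q$ cannot be a theta (thetas have no triangles through two of the special vertices) and cannot be a pyramid with apex $v$ unless $v$ is the apex and $\{a, a'\}$ are on the base; in fact, tracing through the proof of Lemma~\ref{lem:3pc}, the edge $aa'$ together with the two non-edges forces $Q$ to be a prism or a pyramid in which $v$ plays the role of one of the base triangle vertices (respectively the apex), and $a, a'$ lie in the same triangle as $v$. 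So I expect $Q$ to be a prism with triangles $\{v, a, a'\}$ and $\{b_1, b_2, b_3\}$ and paths $R_1$ (from $v$), $R_2$ (from $a$), $R_3$ (from $a'$) to the far triangle, with $u \in R_2 \cup R_3$ or $u$ one of $b_2, b_3$ — or a pyramid with apex $v$, base $\{b_1, b_2, b_3\}$, and $a, a'$ the first internal vertices of two of the paths.

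Having located $Q$, the next step is to set $b$ to be a vertex of $Q$ far from $a$ — concretely, the ``far'' triangle vertex $b_1$ on the path $R_1$ leaving $v$ (in the prism case), or an appropriate base vertex (in the pyramid case) — and to take $K$ to be a clique inside $N[b]$ consisting of $b$ together with its neighbours on the relevant paths of $Q$. The idea is that $\{v\} \cup K$ should be a cutset whose removal puts $u$ in one component and $a, a'$ in another: intuitively, in the prism $v$ blocks the direct triangle edges $va$ and $va'$, while $K$ (living near $b_1$, the triangle opposite $v$) blocks the ``long way around'' through the far triangle. To make this rigorous I would invoke the no-star-cutset hypothesis the way Lemma~\ref{lem:3pc} does: consider $D = G \setminus N[a]$, argue it is connected and that $N(D) = N(a)$, and then use the structure of $Q$ together with clock-freeness and diamond-freeness to control how $D$ attaches to the paths of $Q$, extracting from that the separating clique $K \subseteq N[b]$ with $b \in D \subseteq G \setminus N(a)$.

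The main obstacle, I expect, is proving that the candidate set $\{v\} \cup K$ genuinely separates $\{u\}$ from $\{a, a'\}$ — i.e.\ ruling out a path from $u$ to $a$ or $a'$ avoiding $\{v\} \cup K$. Such a path would have to leave the vicinity of $Q$ into $D = G \setminus N[a]$ and come back; the work is to show that any such detour, combined with the paths of $Q$, creates a clock (a hole with a vertex having two non-adjacent neighbours) or a diamond, contradicting the hypotheses on $G$. This is the standard ``a 3PC plus an extra path yields a forbidden configuration'' argument, but the case analysis (prism vs.\ pyramid, and where exactly $u$ sits relative to the paths) is where the real effort lies. I would handle it by first normalizing the position of $u$ (say $u$ lies on the path $R_2$ through $a$, or $u = b_2$), then showing $b$ can be chosen as the neighbour of the attachment point on that path and $K = N(b) \cap Q$ plus $b$, and finally doing the detour-path analysis to confirm separation.

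\medskip

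\noindent\emph{Remark on scope.} The statement only asks for the existence of \emph{some} vertex $b \notin N(a)$ and clique $K \subseteq N[b]$; it does not demand $b$ or $K$ lie on $Q$, which gives room to first pass to a minimal connected subgraph of $D$ hitting the relevant neighbourhoods (as in Lemma~\ref{minimalconnected}) and read off $b$ and $K$ from the resulting structure, rather than insisting on the original $Q$.
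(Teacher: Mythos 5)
Your opening moves match the paper's: apply Lemma~\ref{lem:3pc} to $v$ and the non-clique $\{a,a',u\}$, observe the resulting three-path configuration must be a prism or pyramid with $v,a,a'$ in a triangle, pick $b$ on the side far from $a$, and convert a star-shaped separator near $b$ into a clique via the no-star-cutset machinery (the paper does this by applying Lemma~\ref{startoclique} to $G\setminus\{v\}$). But there is a genuine gap at exactly the point you flag as ``the real effort'': you assert that a detour path avoiding $\{v\}\cup K$ would, together with $Q$, create a clock or a diamond. That is not what happens in general, and it is not how the paper closes the argument. A detour path attached to a prism or pyramid frequently creates no forbidden induced subgraph at all --- it creates \emph{another, smaller} prism, pyramid or theta. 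The paper's proof is therefore extremal: it considers the whole clique $A$ (the component of $N(v)$ containing $a,a'$), takes the family $\mathcal{Q}$ of \emph{all} configurations arising from \emph{all} pairs $a^*,a^{**}\in A$, and chooses $Q$ minimizing $l(Q)=|P_1(Q)\cup\{b(Q)\}|$; the contradictions in the detour analysis (statement \eqref{st:bi}) come from constructing a new configuration $Q'\in\mathcal{Q}$ with $l(Q')<l(Q)$, not from exhibiting a clock or diamond directly (clocks only finish off the remaining cases). Your plan fixes a single $Q$ from one application of Lemma~\ref{lem:3pc} and has no such minimization, so the separation claim cannot be completed as described; moreover, ranging over all pairs in $A$ (with relabelling) is also what guarantees $b\notin N(a)$, which your fixed choice does not obviously provide.

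Two smaller points: the paper's separating set before the clique-conversion step is $\{v\}\cup(N[b]\setminus(A\cup\{u\}))$, i.e.\ it separates $u$ from the entire clique $A\supseteq\{a,a'\}$, and the clique $K$ is produced abstractly by Lemma~\ref{startoclique} rather than read off as $N(b)\cap Q$; your candidate $K=\{b\}\cup(N(b)\cap Q)$ need not be a clique (e.g.\ in the prism case $b$ has two non-adjacent neighbours on $Q$). Your concluding remark that one could work inside $D=G\setminus N[a]$ and re-extract a minimal connected subgraph is a reasonable instinct, but as stated it is only a direction, not an argument, and it would still need an extremal choice playing the role of $l(Q)$ to rule out detours.
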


\begin{proof}
    Let $A$ be the component of $N(v)$ containing $a$ and $a'$. By Lemma \ref{lem:diamond}, it follows that $A$ is a clique. In particular, $u \not\in A$, and so for all distinct $a^*, a^{**} \in A$, we obtain a paw with vertex set $\{a^*, a^{**}, u, v\}$ such that $u$ has degree one and $v$ has degree three in the paw. Since $G$ is diamond-free, no vertex in $V(G) \setminus N[v]$ has more than one neighbour in $A$. 

    By Lemma \ref{lem:3pc}, for all distinct $a^*, a^{**} \in A$, there is a three-path-configuration in $G$ which contains all of $a^*, a^{**}, u$ and $v$. Let $\mathcal{Q}_{a^*, a^{**}}$ be the set of all such three-path-configurations. Since $\{a^*, a^{**}, v\}$ is a clique, it follows that every $Q \in \mathcal{Q}_{a^*, a^{**}}$ is a prism or a pyramid. Moreover, for $Q \in \mathcal{Q}_{a^*, a^{**}}$, we define $P_1(Q), P_2(Q), P_3(Q)$ to be the paths of $Q$ such that $v \in P_1(Q), a^* \in P_2(Q)$ and $a^{**} \in P_3(Q)$. Since $v$ is in a triangle of $Q$, it follows that $v$ is an end of $P_1(Q)$, and $u \in P_1(Q)\setminus \{v\}$. Let us define $b(Q)$ as the end of $P_3(Q)$ which is not equal to $a^{**}$; that is, $P_3(Q)$ is a path with ends $a^{**}$ and $b(Q)$. Let us define $l(Q) = |P_1(Q) \cup b(Q)|$. See Figure \ref{fig:pawproof}. 

    \begin{figure}[t]
        \centering
        \includegraphics[width=0.8\textwidth]{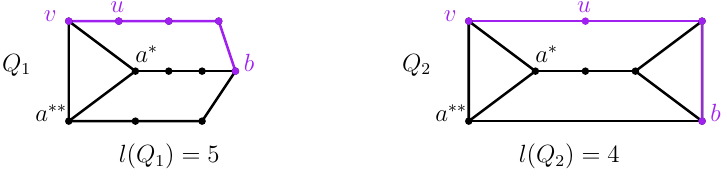}
        \caption{In the proof of Theorem \ref{thm:paw}, the choice of $b = b(Q_i)$ and a path with $l(Q_i)$ vertices of the form $P_1(Q_i) \cup \{b\}$ in the case that $Q_i$ is a pyramid ($i=1$) or $Q_i$ is a prism $(i=2)$.}
        \label{fig:pawproof}
    \end{figure}    

    Let $\mathcal{Q}$ be the union of all sets $\mathcal{Q}_{a^*, a^{**}}$ for distinct $a^*, a^{**} \in A$. Let us pick $Q \in \mathcal{Q}$ with $l(Q)$ minimum; and let $a^*, a^{**} \in A$ such that $Q \in \mathcal{Q}_{a^*, a^{**}}$. By changing the labels of $P_2(Q)$ and $P_3(Q)$ if necessary, we may assume that $a^{**} \neq a$. We claim that $b = b(Q)$ has the desired properties. If $Q$ is a pyramid, then, by Lemma \ref{lem:short}, it follows that $b$ is non-adjacent to both $a^*, a^{**}$. If $Q$ is a prism, then $b$ is non-adjacent to $a^*$ since $b \in P_3(Q) \setminus \{a^{**}\}$. Moreover, $b$ has no neighbour in $A \setminus \{a^*, a^{**}\}$; for suppose otherwise, letting $\hat{a}$ be such a neighbour. Then $\hat{a}$ has two non-adjacent neighbours ($v$ and $b$) in the hole $P_1(Q) \cup P_3(Q)$, a contradiction. Therefore, the only possible neighbour of $b$ in $A$ is $a^{**}$, and in particular, since we relabeled $P_2(Q)$ and $P_3(Q)$ if necessary, it follows that $b$ is non-adjacent to $a$. 
    
    Next, we need to show that $b \neq u$. Suppose that $b=u$. Since $b \in P_3(Q)$ and $u \in P_1(Q)$, it follows that $P_3(Q) \cap P_1(Q) \neq \emptyset$, and hence $Q$ is a pyramid with apex $b=u$. But then, $P_1(Q) = \{u, v\}$, and so $Q$ is a short pyramid, contrary to Lemma \ref{lem:short}. 

    In what follows, we will show: 
    
    \sta{\label{st:pawb}In $G$, the set $X = \{v\} \cup (N[b] \setminus (A \cup \{u\}))$ separates $\{u\}$ from $A$.}
    
    Let us first show that \eqref{st:pawb} implies the statement of the theorem: By Lemma \ref{startoclique} applied to $G \setminus \{v\}$, it follows that $G \setminus \{v\}$ has a clique cutset $K$ contained in $N[b]$; but then $K \cup \{v\}$ is the desired cutset of $G$. 

    It remains to prove \eqref{st:pawb}. Suppose that \eqref{st:pawb} does not hold. Then $G \setminus X$ contains a path from $Y=P_1(Q) \setminus \{v, b\}$ to $Z=A \cup (P_2(Q) \cup P_3(Q)) \setminus N[b])$ with interior disjoint from $X$; let $R$ be a shortest such path. 
    
    \sta{\label{st:rnonempty}The path $R^*$ is non-empty.}
    
    Suppose not; then $R$ consists of an edge $yz$ with $y \in Y$ and $z \in Z$. Since $Y = P_1(Q) \setminus \{b,v\}$ is anticomplete to $Q \setminus P_1(Q)$, it follows that $z \in Z \setminus Q = A \setminus \{a^*, a^{**}\}$. But then $z$ is adjacent to two non-adjacent vertices, namely $a^*$ and $y$, in the hole $P_1(Q) \cup P_2(Q)$, which violates the assumption that $G$ is clock-free. This proves \eqref{st:rnonempty}. 

    \sta{\label{st:rdisjoint}The path $R^*$ is disjoint from $N[b]$.}
    
    Suppose that there is a vertex $r \in R^* \cap N[b]$. Since $r \not\in X$, it follows that $r \in A \cup \{u\}$. This contradicts the fact that $R^*$ is disjoint from $Y \cup Z$, and proves \eqref{st:rdisjoint}. 

    \medskip

    Let $r_1, \dots, r_t$ be the vertices of $R^*$ in order, such that $r_1$ has a neighbour in $Y$ and $r_t$ has a neighbour in $Z$. There are three vertices in $Q$ which may have neighbours in $R^* \setminus \{r_1, r_t\}$: the vertex $v$, the neighbour $b_2$ of $b$ in $P_2(Q)$, and the neighbour $b_3$ of $b$ in $P_3(Q)$. Let us write $b_1$ for the end of $P_1(Q)$ which is not equal to $v$; so $b_1 = b$ if $Q$ is a pyramid, and $b_1 \in N(b)$ if $Q$ is a prism. See Figure \ref{fig:pawproof2}. By considering the holes $P_i(Q) \cup P_j(Q)$ for distinct $i, j \in \{1, 2, 3\}$, we conclude that $N(x) \cap Q$ is a clique for all $x\in V(G) \setminus Q$. 

    \sta{\label{st:cleanpath} There exists $i \in \{2, 3\}$ such that $r_t$ has no neighbour in $P_i(Q)$.}

    If $N(r_t) \cap Q = \{a^*, a^{**}\}$, then $r_t \in N(v)$ as $G$ is diamond-free, and so $r_t \in A$; however, this contradicts that $R^*$ is disjoint from $Z$. Since $N(r_t) \cap Q$ is a clique not equal to $\{a^*, a^{**}\}$, the only possibility is that $r_t$ is adjacent to $b$; but this contradicts \eqref{st:rdisjoint} and thus proves \eqref{st:cleanpath}. 

    \medskip

    Let $i$ be as in \eqref{st:cleanpath}, and let $j \in \{2, 3\} \setminus \{i\}$. We will fix $i$ and $j$ throughout the remainder of the proof. From the choice of $R$, it follows that $r_t$ has a neighbour in $A \cup (P_j(Q) \setminus N[b])$; let $R'$ be a path from $r_t$ to $\hat{a} \in \{a^*, a^{**}\} \cap P_i(Q)$ with interior in $A \cup (P_j(Q) \setminus N[b])$. Note that the neighbour of $\hat{a}$ in $R'$ is a vertex in $A$. 

    \begin{figure}[t]
        \centering
        \includegraphics[width=0.8\textwidth]{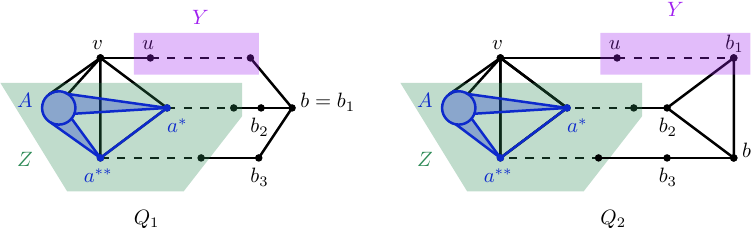}
        \caption{In the proof of Theorem \ref{thm:paw}, the sets $A, Y$ and $Z$ in the case that $Q_i$ is a pyramid ($i=1$) or $Q_i$ is a prism $(i=2)$. \pictured}
        \label{fig:pawproof2}
    \end{figure}  

    \sta{\label{st:bi}The vertex $b_i$ has a neighbour in $R^*$. Moreover, if $Q$ is a prism and $b_3$ has a neighbour in $R^*$, then, traversing $R^*$ from $r_1$ to $r_t$, the first neighbour of $b_3$  appears at the same time or after the first neighbour of $b_2$.}

    Suppose not. Let us define a hole $H$ and paths $T, T'$ as follows:  
    \begin{itemize}
        \item If $b_i$ has no neighbour in $R^*$, then $H = r_t$-$R'$-$\hat{a}$-$P_i(Q)$-$b_1$-$P_1(Q)$-$r_1$-$R^*$-$r_t$ and $T= r_1$-$R^*$-$r_t$-$(R'\setminus \{\hat{a}\})$ and $T' = P_i(Q)$.
        \item Otherwise, $Q$ is a prism and $b_3$ has a neighbour in $R^*$; let $\hat{R}$ be a path from $r_1$ to $b_3$ with interior in $R^*$. From our assumption, it follows that $\hat{R}$ contains no neighbour of $b_2$. We set $H = r_1$-$\hat{R}$-$b_3$-$P_3(Q)$-$a^{**}$-$a^*$-$P_2(Q)$-$b_2$-$b_1$-$P_1(Q)$-$r_1$ and $T = r_1$-$\hat{R}$-$b_3$-$P_3(Q)$-$a^{**}$ and $T' = b_2$-$P_2(Q)$-$a^*$. 
    \end{itemize}
    In either case, the paths $T, T'$ are disjoint, each having an end in $A$. 
    
    Suppose first that $v$ has a neighbour $q$ in $R^* \cap H$. Then, since $G$ does not contain a clock and $v$ is adjacent to $\hat{a} \in H$, it follows that $q\hat{a} \in E(G)$. It follows from the choice of $R$ that $q = r_t$. But $\hat{a} \in P_i(Q)$, contrary to the choice of $i$ and \eqref{st:cleanpath}. Therefore, $v$ has no neighbour in $R^*$. 

    Now we consider $N(r_1) \cap Q$, which is a clique. We would like to show that $N(r_1) \cap Q \subseteq P_1(Q)$. If $r_1$ has a neighbour in $P_1(Q)^*$, then $N(r_1) \cap Q \subseteq P_1(Q)$, as desired. Otherwise, $r_1$ is adjacent to $b_1$ as well as at least one of $b, b_2$, and $Q$ is a pyramid. Since $G$ is diamond-free, it follows that $r_1$ is adjacent to both $b$ and $b_2$. But this contradicts \eqref{st:rdisjoint}. We conclude that $N(r_1) \cap Q \subseteq P_1(Q)$. 
    
    If $N(r_1) \cap Q = \{b'\}$ is a single vertex, then there is a pyramid $Q'$ with paths $P_1(Q') = b'$-$P_1(Q)$-$v$ (which contains $u$), $P_2(Q') = b'$-$P_1(Q)$-$b_1$-$T'$, and $P_3(Q') = b'$-$r_1$-$T$. It follows that $Q' \in \mathcal{Q}$. Since $P_1(Q') \cup \{b(Q')\} = P_1(Q') \subseteq P_1(Q) \cup \{b\}$ and since $b \not\in P_1(Q')$, we conclude that $P_1(Q') \cup \{b(Q')\}\subsetneq P_1(Q) \cup \{b\}$, and so $l(Q') < l(Q)$, contrary to the choice of $Q$. This implies that $N(r_1) \cap Q = \{b', c\}$, where $b'c \in E(Q)$, and we may assume that $P_1(Q)$ traverses $v, c, b'$ in this order. We note that $b' \neq b$ by \eqref{st:rdisjoint}. Now there is a prism $Q'$ in $G$ with paths $P_1(Q') = c$-$P_1(Q)$-$v$ (which contains $u$), $P_2(Q') = b'$-$P_1(Q)$-$T'$, and $P_3(Q') = T$. It follows that $Q' \in \mathcal{Q}$.  Again, we have that $P_1(Q') \cup \{b(Q')\} = P_1(Q') \cup \{b'\} \subseteq (P_1(Q) \setminus \{b\}) \subsetneq P_1(Q) \cup \{b\}$, contradicting the choice of $Q$. This proves \eqref{st:bi}. 

    \medskip 

    Let $R''$ be a shortest path from $r_t$ to $b_j$ with interior in $(A \setminus P_i(Q)) \cup (P_j(Q) \setminus N[b])$. Since we showed that $N(b) \cap A \subseteq \{a^*, a^{**}\}$, it follows that $N(b) \cap R'' \subseteq (N(b) \cap A \cap P_j(Q)) \cup \{b_j\} \subseteq \{b_j\}$. Let $P = r_1$-$R$-$r_t$-$(R''\setminus \{b_j\})$; so in particular, $N(b) \cap P = \emptyset$. Let $P'$ be the shortest subpath of $P$ containing $r_1$ as well as a neighbour of $b_i$ and a neighbour of $b_j$. Since $R''$ contains a neighbour of $b_j$ and $R^*$ contains a neighbour of $b_i$ by \eqref{st:bi}, the path $P'$ is well-defined. Let $p$ be the end of $P'$ not equal to $r_1$; and let $k \in \{2, 3\}$ be maximum such that $b_k$ is adjacent to $p$ and $b_k$ has no neighbour in $P' \setminus \{p\}$ (such $k$ exists by the choice of $P'$, as otherwise $P' \setminus \{p\}$ would be a better choice than $P'$). Let $k' \in \{2, 3\} \setminus \{k\}$. 
    
    Suppose first that either $Q$ is a pyramid, or $Q$ is a prism and $k=3$. Then, there is a hole $H'$ in $G$, defined as $H' = r_1$-$P_1(Q)$-$b$-$b_k$-$p$-$P'$-$r_1$. The vertex $b_{k'}$ has two non-adjacent neighbours in $H'$, namely $b$ and a neighbour in $P'$. Since $G$ is clock-free, this is a contradiction. 

    It follows that $Q$ is a prism and $k = 2$. Since at least one of $b_2, b_3$ has a neighbour in $R^*$ by \eqref{st:bi}, it follows from the choice of $k$ that $b_3$ has a neighbour in $R^*$. But now \eqref{st:bi} implies that the first neighbour of $b_2$ along $R^*$, traversed from $r_1$ to $r_t$, appears at the same time or before the first neighbour of $b_3$, which implies that $k = 3$, a contradiction. This concludes the proof. 
\end{proof}

Next, we show that certain seagulls lead to similar cutsets as in Theorem \ref{thm:paw}. We start with two lemmas. Given a graph $G$, a vertex $v\in V(G)$ is a \emph{claw center} in $G$ if $N_G(v)$ contains three pairwise non-adjacent vertices. 

\begin{lemma} \label{lem:claw}
    Let $G$ be a clock-free graph and let $P$ be a path in $G$. Let $a$ be an end of $P$ and let $y$ be the neighbour of $a$ in $P$. Let $x, v \in N(a)$ such that $\{x, y, v\}$ is a stable set. Then at least one of $x$ and $v$ is anticomplete to $P \setminus \{a\}$. 
\end{lemma}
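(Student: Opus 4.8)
The plan is to argue by contradiction. Suppose that \emph{both} $x$ and $v$ have a neighbour in $P\setminus\{a\}$; I will produce a clock in $G$, contradicting the hypothesis. Two preliminary remarks: since $\{x,y,v\}$ is a stable set, the vertices $x,y,v$ are distinct and pairwise non-adjacent; and since $y$ is the only neighbour of $a$ on $P$, neither $x$ nor $v$ lies on $P$ (each would otherwise have to equal $y$). Fix an enumeration $a,y,\dots,b$ of $P$ in order.

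First I build a hole around $x$. Let $p$ be the neighbour of $x$ on $P$, other than $a$, that is closest to $a$ along $P$; it exists by assumption, and $p\neq y$ since $x\not\sim y$, so $a\not\sim p$ and the subpath $P_1$ of $P$ from $a$ to $p$ has at least three vertices. By the minimality of $p$, and using $x\not\sim y$ again, the only neighbours of $x$ on $P_1$ are $a$ and $p$; hence $C_1:=G[V(P_1)\cup\{x\}]$ is a hole, and $v\notin V(C_1)$. Now $v$ has no neighbour in $V(P_1)\setminus\{a\}$: otherwise, such a neighbour $z$ satisfies $z\neq x$ (as $v\not\sim x$) and $z\neq y$ (as $v\not\sim y$), so $a$ and $z$ are two non-adjacent vertices of the hole $C_1$, both neighbours of the vertex $v\notin V(C_1)$ --- a clock.

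Hence the neighbour of $v$ on $P\setminus\{a\}$ provided by our assumption lies strictly beyond $p$ on $P$; let $q$ be such a neighbour that is closest to $p$ along $P$, and let $P_2$ be the subpath of $P$ from $a$ to $q$, which properly contains $P_1$ and has $p$ as an internal vertex. By the previous step and the choice of $q$, the only neighbours of $v$ on $P_2$ are its ends $a$ and $q$; since also $a\not\sim q$ (as $q\neq y$), the graph $C_2:=G[V(P_2)\cup\{v\}]$ is a hole. Finally $x\notin V(C_2)$ (it is off $P$ and distinct from $v$), while $x$ is adjacent to the two vertices $a$ and $p$ of $C_2$, which are non-adjacent --- so $(C_2,x)$ is a clock, the desired contradiction.

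I do not expect a genuine obstacle here; the work is in keeping the adjacencies straight and checking that $C_1$ and $C_2$ are honest induced cycles of length at least four. Two points are worth flagging. The stable-set hypothesis is used exactly to guarantee that $v$ misses \emph{both} neighbours $x$ and $y$ of $a$ on the hole $C_1$, which is what forces any neighbour of $v$ on $C_1$ to be non-adjacent to $a$. And it is essential to make $x$ (rather than $v$) the centre of the clock built on $C_2$: the vertex $v$ lies on $C_2$, so it cannot serve as a clock centre there, whereas $x$ lies outside $C_2$ and manifestly has the non-adjacent neighbours $a$ and $p$ on it.
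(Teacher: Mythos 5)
Your proof is correct and takes essentially the same route as the paper: both arguments truncate $P$ at the point where the "later" of $x,v$ first attaches, close a hole through that vertex (whose only neighbour on the truncated path is its far end), and use the other vertex --- adjacent to $a$ and to an interior vertex of the hole --- as the clock centre. The paper gets there in one step by choosing $P$ minimal so that $P\setminus\{a\}$ meets both $N(x)$ and $N(v)$, while you argue in two explicit stages (first excluding neighbours of $v$ on $P_1\setminus\{a\}$ via an auxiliary clock), but the final clock is the same configuration.
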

\begin{proof}
     Suppose not. We may assume that $P$ is chosen minimal such that $a \in P$ and $P \setminus \{a\}$ contains both a neighbour of $x$ and a neighbour of $v$. Then $P \cup \{x, v\}$ is a clock: let $w \in \{x, v\}$ be chosen such that the only neighbour of $w$ in $P \setminus \{a\}$ is the end of $P$ not equal to $a$. Then $w$-$P$-$a$-$w$ is a hole, and the vertex $q \in \{x, v\} \setminus \{w\}$ has at least two non-adjacent neighbours in it, namely $a$ and a vertex in $P \setminus \{a, y\}$. This is a contradiction, proving Lemma \ref{lem:claw}. 
\end{proof}

\begin{lemma}\label{lem:seagull}
    Let $G$ be a (clock, diamond)-free graph with no star cutset. Suppose that $G$ contains a seagull; and let $a, u, v \in V(G)$ be as in the definition of a seagull. Suppose further that $a$ is a claw center in $G$. Then there is a three-path configuration $Q$ in $G$ such that $a, u, v \in Q$ and $a$ is a claw center in $Q$. 
\end{lemma}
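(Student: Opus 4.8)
The plan is to extract the three-path configuration directly from Lemma~\ref{lem:3pc}, by handing it a well-chosen stable set of neighbours of $a$ that already contains $u$ and $v$. The key observation is that the conclusion ``$a$ is a claw center in $Q$'' is, by definition, nothing more than the assertion that $N_Q(a)$ contains three pairwise non-adjacent vertices. So as soon as $Q$ is an induced subgraph of $G$ containing $a$ together with three pairwise non-adjacent neighbours of $a$, we are done.

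First I would locate a suitable third vertex. Since $G$ is diamond-free, Lemma~\ref{lem:diamond} gives that $G[N(a)]$ is a disjoint union of cliques that are pairwise anticomplete to one another; let $K_u$ and $K_v$ be the ones containing $u$ and $v$. By the definition of a seagull we have $u,v \in N(a)$ and $uv \notin E(G)$, so $K_u \neq K_v$. Because $a$ is a claw center in $G$, the set $N(a)$ contains three pairwise non-adjacent vertices, and these lie in three distinct components of $G[N(a)]$; hence there is a component $K$ of $G[N(a)]$ with $K \notin \{K_u, K_v\}$. Fix any $x \in K$. Then $x \in N(a)$, $x \notin \{u,v\}$, and $x$ is non-adjacent to both $u$ and $v$ (distinct components of $G[N(a)]$ are anticomplete), so $\{u,v,x\}$ is a stable set of size three contained in $N(a)$.

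Next I would apply Lemma~\ref{lem:3pc} to the vertex $a$ and its neighbours $u,v,x$. Since $\{u,v,x\}$ is a stable set it is not a clique, so Lemma~\ref{lem:3pc} produces a three-path configuration $Q$ in $G$ with $a, u, v, x \in V(Q)$. As $G$ contains $Q$, the subgraph $Q$ is an induced subgraph of $G$, so all adjacencies of $G$ among $a,u,v,x$ are present in $Q$: thus $u,v,x \in N_Q(a)$ and $u,v,x$ are pairwise non-adjacent in $Q$. This says exactly that $a$ is a claw center in $Q$, and together with $a,u,v \in V(Q)$ it is the desired conclusion.

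I do not expect a genuine obstacle; the only points needing (routine) care are the little counting argument showing a third component of $G[N(a)]$ exists, and the observation that passing to the induced subgraph $Q$ preserves the claw at $a$. If one wanted the sharper information that $Q$ must be a theta with $a$ an end or a pyramid with apex $a$, one would additionally note that no vertex of a prism has three pairwise non-adjacent neighbours, and that in a pyramid (resp.\ a theta) only the apex (resp.\ an end) is a claw center; but the statement as given does not require this.
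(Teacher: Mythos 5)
Your argument hinges on reading the printed definition of a seagull literally: edge set $\{au,av\}$, so that $a$ is the centre, $u,v\in N(a)$ and $uv\notin E(G)$. That edge set is a typo. Everywhere the seagull is actually used in the paper, $v$ is the centre: $av,uv\in E(G)$ and $au\notin E(G)$. You can see this in the paper's own proof of Lemma~\ref{lem:seagull}, which builds a path $P$ from $a$ to $u$ with interior outside $N[v]$ and then works with the hole $a$-$x$-$P$-$u$-$v$-$a$ (this is a hole only if $v$ is adjacent to both $a$ and $u$ while $a$ is non-adjacent to $u$), and which uses the edge $uv$ when invoking Lemma~\ref{lem:claw}. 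It is also forced by Theorem~\ref{thm:seagull}: its conclusion asks for a set $\{v\}\cup K$ disjoint from $\{a,u\}$ separating $\{a\}$ from $\{u\}$, which is impossible if $au$ is an edge. Likewise, Lemma~\ref{lem:centraldegree} applies the seagull lemmas to $\{a,x,u\}$ where $x$ is a common neighbour of the non-adjacent pair $a,u$, i.e.\ the third listed vertex plays the role of the centre.

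Under the intended reading your proof collapses at the first step: $u\notin N(a)$, so there is no stable set $\{u,v,x\}\subseteq N(a)$ to hand to Lemma~\ref{lem:3pc}. The natural repairs do not work either: applying Lemma~\ref{lem:3pc} at $a$ with three pairwise non-adjacent neighbours of $a$ (one of which may be $v$) produces a three-path configuration in which $a$ is a claw centre, but nothing forces $u$ to lie in it; applying it at $v$ with $a$, $u$ and a third neighbour of $v$ puts $a,u,v$ into the configuration but gives no control over $a$, which may end up as an interior vertex of degree two rather than a claw centre. The whole content of the lemma is to capture $u$, a vertex at distance two from $a$ (through $v$), inside a configuration in which $a$ is still a branch vertex; this is exactly what the paper's hands-on construction achieves (the path $P$ from $a$ to $u$ avoiding $N[v]$ via the no-star-cutset hypothesis, the choice of $y$ with $\{x,y,v\}$ stable, the path $R$ avoiding $N[a]$, and then a theta with end $a$ or a pyramid with apex $a$ containing both $u$ and $v$). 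For what it is worth, if the definition really were as printed, your reduction to Lemma~\ref{lem:3pc} would be correct and considerably shorter; but that reading makes Theorem~\ref{thm:seagull} unsatisfiable, so it cannot be the statement the paper intends, and your proof does not establish the intended one.
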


\begin{proof}
    Since $N[v] \setminus \{a, u\}$ is not a star cutset in $G$, it follows that there is a path $P$ in $G$ with ends $a$ and $u$ and with interior disjoint from $N[v]$. Let $x$ be the neighbour of $a$ in $P$. Then $x$ is non-adjacent to $v$ from the choice of $P$. 

     From Lemma \ref{lem:diamond}, it follows that $N(a)$ has at least three components. Let us pick $y \in N(a)$ such that $\{x, y, v\}$ is an independent set.     

     Since $N[a] \setminus \{y\}$ is not a star cutset, it follows that there is a path from $y$ to $P \setminus \{x\}$ with interior disjoint from $N[a].$ Let $R$ be a shortest such path. See Figure~\ref{fig:seagulllemma}.

     Let $r_1, \dots, r_t$ denote the vertices of $R$ in order such that $y = r_1$, and $r_t \in P$. Traversing $P$ from $x$ to $u$, let us $w$ and $w'$ denote the first and last neighbour of $r_{t-1}$ in $P$, respectively. Then, $P' = y$-$R$-$w'$-$P$-$u$ is an induced path (due to the choice of $R$ and $w'$). Applying Lemma \ref{lem:claw} to $P'$ and $a, x, y, v$ implies that not both $x$ and $v$ have a neighbour in $R$. But $v$ does have a neighbour in $R$, namely $u$; so $x$ is anticomplete to $R$. 

     \begin{figure}[t]
         \centering
         \includegraphics[width=0.3\textwidth]{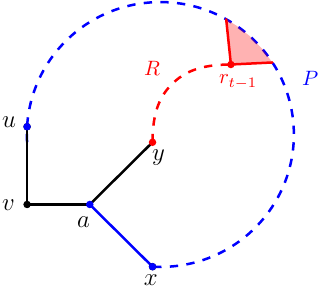}
         \caption{Proof of Lemma \ref{lem:seagull}. \pictured}
         \label{fig:seagulllemma}
     \end{figure}

    Now let us consider the hole $H = a$-$y$-$R$-$w$-$P$-$x$-$a$. Then, since $a \in N(v) \cap H$ and $x, y \not\in N(v)$, it follows that $v$ has no neighbour in $H \setminus \{a\}$ as $G$ is clock-free. 

    Since $a$-$x$-$P$-$u$-$v$-$a$ is a hole, it follows that $r_{t-1}$ either has exactly one neighbour in $P$, or exactly two neighbours in $P$ and they are adjacent. In the former case, we get a theta $Q$ with ends $a$ and $w=w'$ which satisfies \ref{lem:seagull}; in the latter case, we get a pyramid $Q$ with apex $a$ and base $\{w, w', r_{t-1}\}$ which satisfies \ref{lem:seagull}. This concludes the proof. 
\end{proof}

The proof of the next result  follows the same structure as the proof of Theorem \ref{thm:paw}:

\begin{theorem}\label{thm:seagull}
    Let $G$ be a (clock, diamond)-free graph with no star cutset. Suppose that $G$ contains a seagull with $a, u, v \in V(G)$ as in the definition of a seagull. Suppose further that $a$ is a claw center in $G$. Then there is a vertex $b\in V(G)$ and a clique $K \subseteq N[b]$ such that $\{v\} \cup K$ separates $\{a\}$ from $\{u\}$, and $a$ is non-adjacent to $b$. 
\end{theorem}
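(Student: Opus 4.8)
The plan is to follow the proof of Theorem~\ref{thm:paw} almost verbatim, with the claw center $a$ and the three paths leaving it taking over the roles of the clique $A$ and the triangle of the three-path configuration there. (Here $v$ is adjacent to both $a$ and $u$ while $a\not\sim u$, as otherwise no set disjoint from $\{a,u\}$ could separate $\{a\}$ from $\{u\}$.) First apply Lemma~\ref{lem:seagull} to fix a three-path configuration $Q$ with $a,u,v\in Q$ and $a$ a claw center in $Q$. Since the only vertices of a pyramid, theta, or prism with three pairwise non-adjacent neighbours are the apex of a pyramid and the two ends of a theta, $Q$ is a pyramid with apex $a$ or a theta with $a$ an end; in both cases its three paths $P_1,P_2,P_3$ all end at $a$ and the neighbours of $a$ on $P_1,P_2,P_3$ are pairwise non-adjacent. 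As $v\in N(a)\cap Q$, the vertex $v$ is the neighbour of $a$ on one path, say $P_1$, and as $u\sim v$, $u\not\sim a$, the vertex $u$ is the next vertex of $P_1$ after $v$; in particular $P_1$ has length at least two.

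Let $\mathcal{Q}$ be the set of all three-path configurations obtained this way. For $Q\in\mathcal{Q}$, after interchanging $P_2$ and $P_3$ if necessary---legitimate since a pyramid has at most one path of length one while $P_1$ has length at least two---I may assume $P_3$ has length at least two, so its end $b(Q)$ other than $a$ satisfies $b(Q)\notin N[a]$ (in the theta case $b(Q)$ is the common far end). Put $l(Q)=|P_1(Q)\cup\{b(Q)\}|$, choose $Q\in\mathcal{Q}$ minimizing $l(Q)$, and set $b=b(Q)$. As in Theorem~\ref{thm:paw} one records: $b\not\sim a$; $b$ has no neighbour in $P_1(Q)$ except possibly its end $b_1$ other than $a$ (which in the theta case is $b$ itself, while in the pyramid case a base edge $b_1b$ is the only possibility); $N(x)\cap Q$ is a clique for every $x\in V(G)\setminus Q$, from the holes $P_i\cup P_j$; and $b\ne u$, since otherwise $Q$ would be a short pyramid, contradicting Lemma~\ref{lem:short}.

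The crux is the analogue of \eqref{st:pawb}: the set $X:=\{v\}\cup(N[b]\setminus\{a,u\})$ separates $Y:=P_1(Q)\setminus\{a,v,b(Q)\}$ from $Z:=(\{a\}\cup P_2(Q)\cup P_3(Q))\setminus N[b]$ in $G$. Since $u\in Y$ and $a\in Z$ (as $b\not\sim a$), this separates $\{u\}$ from $\{a\}$; granting it, $X\setminus\{v\}\subseteq N[b]$ separates the component of $u$ from the component of $a$ in $G\setminus\{v\}$, so Lemma~\ref{startoclique} (with $k=1$) gives a clique $K\subseteq N[b]$ meeting every $u$--$a$ path of $G\setminus\{v\}$; then $\{v\}\cup K$ meets every $a$--$u$ path of $G$, and with $b\not\sim a$ this is the required cutset. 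To prove the separation, argue by contradiction as in Theorem~\ref{thm:paw}: let $R$ be a shortest path from $Y$ to $Z$ in $G\setminus X$; then $R^*$ is nonempty (else the $Z$-end of $R$, lying in $Z\setminus Q$, has two non-adjacent neighbours---the $Y$-end of $R$ and a neighbour of $a$ on $P_2$ or $P_3$---in a hole $P_1\cup P_i$, a clock) and disjoint from $N[b]$ (else an interior vertex of $R$ would be $u\in Y$). One then runs the case analysis on which of $a$ and the neighbours $b_2,b_3$ of $b$ on $P_2,P_3$ have a neighbour on $R^*$: in each case one either exhibits a clock built from $R$ and parts of $Q$, or reroutes $R$ through $Q$ to obtain $Q'\in\mathcal{Q}$ with $P_1(Q')\cup\{b(Q')\}\subsetneq P_1(Q)\cup\{b(Q)\}$, contradicting the choice of $Q$.

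The main obstacle is this last case analysis, and within it the check that each rerouted $Q'$ actually lies in $\mathcal{Q}$---that $a$ is still a claw center of $Q'$, not merely that $Q'$ is a three-path configuration. In Theorem~\ref{thm:paw}, $v$ sat inside a triangle of $Q$, which a newly built prism or pyramid through $v$ and two of its clique-neighbours inherits for free; here $a$ is the apex or end, and remaining a claw center is a condition on $a$'s three neighbours in $Q'$ that must be re-verified each time. Moreover the pyramid-versus-theta dichotomy here plays the role of the prism-versus-pyramid dichotomy there, shifted by the apex triangle, so the holes and subpaths in the reroutings must be re-chosen rather than copied verbatim. I also expect to need to dispose of a few degenerate short-configuration cases---for instance $P_1$ of length two in a theta, where $u=b(Q)$ and $Y=\emptyset$---which the minimality of $l(Q)$ should preclude (a shorter member of $\mathcal{Q}$ is then extractable) or which can be treated by hand.
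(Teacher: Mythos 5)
Your plan is structurally the paper's own: take the configurations supplied by Lemma~\ref{lem:seagull}, minimize $l(Q)=|P_1(Q)\cup\{b(Q)\}|$, show that $\{v\}\cup(N[b]\setminus\{u\})$ separates $\{u\}$ from $\{a\}$, and convert this star-shaped cutset into $\{v\}\cup K$ via Lemma~\ref{startoclique} applied in $G\setminus\{v\}$. But there is a genuine gap in the part you defer as ``run the case analysis as in Theorem~\ref{thm:paw}.'' The paper's proof is a global proof by contradiction precisely so that it can first establish \eqref{st:av}: $N(a)\cap N(v)=\emptyset$, because a common neighbour $a'$ would make $\{a,a',u,v\}$ a paw and Theorem~\ref{thm:paw} would already give the desired cutset. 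This fact is then used at exactly the two branches where your sketch claims ``one exhibits a clock.'' First, in \eqref{st:i}: since $a$ lies on every path of $Q$, the clique $N(r_t)\cap Q$ could a priori be $\{v,a\}$, which is not excluded by any clock or by the paw-proof device (there, \eqref{st:cleanpath} used diamond-freeness to force $r_t\in A$); it is excluded only because $r_t$ would lie in $N(a)\cap N(v)$. Second, in the analogue of \eqref{st:bi} (the paper's \eqref{st:nbrs2}): if $v$ has a neighbour $q$ on $R^*\cap H$, clock-freeness only forces $qa\in E(G)$, and since $a$ belongs to $P_j(Q)$ this does \emph{not} contradict the choice of $j$ (which is how the paw proof closed this branch, using $\hat a\in P_i(Q)$); the paper instead concludes $q\in N(a)\cap N(v)$, contradicting \eqref{st:av}. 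So a verbatim transplant of Theorem~\ref{thm:paw} does not close these branches, and your proposal neither states the reduction to the paw case nor supplies a substitute.

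On the other hand, the obstacle you single out is not one: any rerouted $Q'$ built as in the paw proof is a theta with end $a$ or a pyramid with apex $a$, and in a theta the three neighbours of an end are pairwise non-adjacent by definition, while in a pyramid with apex $a$ they are pairwise non-adjacent unless the pyramid is short, which Lemma~\ref{lem:short} forbids; what must be checked is only that $u,v\in Q'$, immediate since $P_1(Q')$ contains the segment of $P_1(Q)$ from $v$ through $u$. Two smaller slips: your justification that $R^*\neq\emptyset$ (``the $Z$-end of $R$ lies in $Z\setminus Q$'') is an artifact of the paw setting -- here $Z\subseteq Q$, and the correct, easier reason is that $Y$ is anticomplete to $Z$; and your argument that $b\neq u$ via short pyramids covers only the pyramid case, while in the theta case one rules out $u=b$ by observing that $v$ would have the two non-adjacent neighbours $a$ and $u=b$ in the hole $P_2(Q)\cup P_3(Q)$, rather than by minimality of $l(Q)$.
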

\begin{proof}
    Suppose not. 
    
    \sta{\label{st:av}The set $N(a) \cap N(v)$ is empty.}
    
    Suppose that there is a vertex $a' \in N(a) \cap N(v)$. Then, the vertices $a, a', u, v$ form a paw (using that $G$ is diamond-free), and so \ref{thm:seagull} follows from Theorem \ref{thm:paw}, a contradiction. This proves \eqref{st:av}. 

    \medskip
    
    Let $\mathcal{Q}$ be the set of all three-path configuration of the form guaranteed by Lemma \ref{lem:seagull}; that is, every $Q \in \mathcal{Q}$ is a three-path configuration containing $a, u, v$, and $a$ is a claw center in $Q$. By Lemma \ref{lem:seagull}, the set $\mathcal{Q}$ is non-empty. Let $Q \in \mathcal{Q}$. Since $Q$ contains a claw center, it follows that $Q$ is a pyramid with apex $a$, or a theta with end $a$. Let $P_1(Q), P_2(Q), P_3(Q)$ be the paths of $Q$, labelled in such a way that $v \in P_1(Q)$. We define $b(Q)$ to be the end of $P_3(Q)$ which is not equal to $a$, and we define $l(Q) = |P_1(Q) \cup b(Q)|$. See Figure \ref{fig:seagull}. 

    Now, let $Q \in \mathcal{Q}$ be chosen with $l(Q)$ minimum. We claim that $b = b(Q)$ is the desired vertex. Note that $a$ and $b$ are not adjacent (if $Q$ is a theta, this is true from the definition of a theta; if $Q$ is a pyramid, it follows from Lemma \ref{lem:short}). As in Theorem \ref{thm:paw}, it is sufficient to prove:
    
    \sta{\label{st:seagullmain}The set $\{v\} \cup (N[b] \setminus \{u\})$ separates $\{u\}$ from $\{a\}$ in $G$.}

    Assuming \eqref{st:seagullmain} to be true, by applying Lemma \ref{startoclique} to $G \setminus \{v\}$ and the star cutset $N[b] \setminus \{u\}$, we obtain the desired clique $K$. 

    It remains to prove \eqref{st:seagullmain}; so we suppose for a contradiction that \eqref{st:seagullmain} does not hold. We first show that $u \neq b$. In the case that $Q$ is a pyramid, this is immediate; when $Q$ is a theta, it follows from the fact that $v$ does not have two non-adjacent neighbours (namely $a$ and $u=b$) in the hole $P_2(Q) \cup P_3(Q)$. Therefore, $u \neq b$. 

    Next, let us define some notation. We denote the neighbour of $a$ in $P_2(Q)$ as $x$, and the neighbour of $a$ in $P_3(Q)$ as $y$. Moreover, let us write $b_1$ for the end of $P_1(Q)$ not equal to $a$, and for $i \in \{2, 3\}$, let us write $b_i$ for the unique neighbour of $b$ in $P_i(Q)$; see Figure \ref{fig:seagull}. 
    
    Let $X = \{v\} \cup (N[b] \setminus \{u\})$. Let $Y = P_1(Q) \setminus \{v, b\}$; it follows that $u \in Y$ as $u \neq b$. Let $Z = (P_2(Q) \cup P_3(Q)) \setminus N[b]$; it follows that $a \in Z$. Since we assumed that \eqref{st:seagullmain} does not hold, it follows that there is a path from $Y$ to $Z$ with interior disjoint from $X$; let $R$ be a shortest such path. It follows that one end of $R$ is in $Y$, and the other is in $Z$, and $R^*$ is disjoint from $X \cup Y \cup Z$. 

    \begin{figure}[t]
        \centering
        \includegraphics[width=0.8\textwidth]{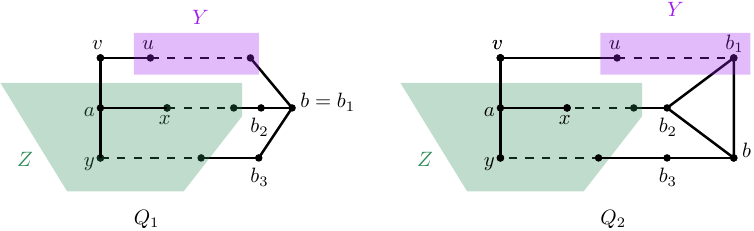}
        \caption{Name conventions for vertices in the proof of Theorem \ref{thm:seagull} in the case when $Q = Q_i$ is a theta ($i=1$) and when $Q = Q_i$ is a pyramid ($i = 2$). \pictured}
        \label{fig:seagull}
    \end{figure}

    \sta{\label{st:aboutr}The set $R^*$ is non-empty, and $N[b] \cap R^* = \emptyset.$}
    Since $Y$ is anticomplete to $Z$, it follows that $R^*$ is non-empty.
     Moreover, since $N[b] \subseteq X \cup Y$, it follows that $N[b] \cap R^* = \emptyset$. This proves \eqref{st:aboutr}. 

     \medskip
     
     Let $r_1, \dots, r_t$ denote the vertices of $R^*$ in order, such that $r_1$ has a neighbour in $Y$ and $r_t$ has a neighbour in $Z$. By \eqref{st:aboutr}, the only vertices of $Q$ that may have a neighbour in $R^* \setminus \{r_1, r_t\}$ are $v, b_2, b_3$. 

    By considering the holes $P_i(Q) \cup P_j(Q)$ for distinct $i, j \in \{1, 2, 3\}$, we conclude that $N(x) \cap Q$ is a clique for all $x \in V(G) \setminus Q$. 

    \sta{\label{st:i}There exists $j \in \{2, 3\}$ such that $N(r_t) \cap Q \subseteq P_j(Q)$.}

    Suppose not. Then, since $r_t$ is non-adjacent to $b$ by \eqref{st:aboutr}, and since $N(r_t) \cap Q$ is a clique, it follows that $r_t$ is adjacent to $v$ and $a$. This contradicts \eqref{st:av} and proves \eqref{st:i}. 

    \medskip

    In the remainder of this proof, let us fix $j$ as in \eqref{st:i} and let $i \in \{2, 3\} \setminus \{j\}.$ Since $N(r_t) \cap Q$ is a clique and $b$ is non-adjacent to $r_t$, it follows that $N(r_t) \cap P_i(Q) \subseteq \{a\}$. 

    \sta{\label{st:nbrs2}The vertex $b_i$ has a neighbour in $R^*$. Furthermore, if $Q$ is a pyramid and $b_3$ has a neighbour in $R^*$, then, traversing $R^*$ from $r_1$ to $r_t$, the first neighbour of $b_3$  appears at the same time or after the first neighbour of $b_2$.}

    The proof of \eqref{st:nbrs2} is similar to the proof of \eqref{st:bi}. Suppose that \eqref{st:nbrs2} does not hold. Let $R'$ be a path from $r_t$ to $a$ with interior in $P_j(Q) \setminus N[b]$. Let us define a hole $H$ and paths $T, T'$ as follows:  
    \begin{itemize}
        \item If $b_i$ has no neighbour in $R^*$, then $H = r_t$-$R'$-$a$-$P_i(Q)$-$b_1$-$P_1(Q)$-$r_1$-$R^*$-$r_t$ and $T= r_1$-$R^*$-$r_t$-$R'$ and $T' = P_i(Q)$.
        \item Otherwise, $Q$ is a pyramid and $b_3$ has a neighbour in $R^*$; let $\hat{R}$ be a path from $r_1$ to $b_3$ with interior in $R^*$. From our assumption, it follows that $\hat{R}$ contains no neighbour of $b_2$. We set $H = r_1$-$\hat{R}$-$b_3$-$P_3(Q)$-$a$-$P_2(Q)$-$b_2$-$b_1$-$P_1(Q)$-$r_1$ and $T = r_1$-$\hat{R}$-$b_3$-$P_3(Q)$-$a$ and $T' = P_2(Q)$. 
    \end{itemize}
    In either case, we have $T \cap T' = \{a\}$. 
    
    Suppose first that $v$ has a neighbour $q$ in $R^* \cap H$. Then, since $G$ does not contain a clock and $v$ is adjacent to $a \in H$, it follows that $qa \in E(G)$. But now $q \in N(a) \cap N(v)$, contrary to \eqref{st:av}. Therefore, $v$ has no neighbour in $R^* \cap H$. 
    
    Now we consider $N(r_1) \cap Q$, which is a clique. We would like to show that $N(r_1) \cap Q \subseteq P_1(Q)$. If $r_1$ has a neighbour in $P_1(Q)^*$, then $N(r_1) \cap Q \subseteq P_1(Q)$, as desired. Otherwise, $r_1$ is adjacent to $b_1$ as well as at least one of $b, b_2$ and $Q$ is a pyramid. Since $G$ is diamond-free, it follows that $r_1$ is adjacent to both $b$ and $b_2$. But this contradicts \eqref{st:aboutr}. We conclude that $N(r_1) \cap Q \subseteq P_1(Q)$. 
    
    If $N(r_1) \cap Q = \{b'\}$ is a single vertex, then there is a theta $Q'$ with paths $P_1(Q') = b'$-$P_1(Q)$-$a$ (which contains $u$ since $r_1$ has a neighbour in $Z$), $P_2(Q') = b'$-$P_1(Q)$-$b_1$-$T'$-$a$, and $P_3(Q') = b'$-$r_1$-$T$-$a$. It follows that $Q' \in \mathcal{Q}$. Since $P_1(Q') \cup \{b(Q')\} = P_1(Q') \subsetneq P_1(Q) \cup \{b\}$ since $b \not\in P_1(Q')$, we conclude that $l(Q') < l(Q)$, contrary to the choice of $Q$. This implies that $N(r_1) \cap Q = \{b', c\}$, where $b'c \in E(Q)$, and we may assume that $P_1(Q)$ traverses $v, c, b'$ in this order. We note that $b' \neq b$ by \eqref{st:rdisjoint}. Now there is a pyramid $Q'$ in $G$ with paths $P_1(Q') = c$-$P_1(Q)$-$a$ (which contains $u$, since $Q'$ is not a short pyramid by Lemma \ref{lem:short}), $P_2(Q') = b'$-$P_1(Q)$-$T'$, and $P_3(Q') = T$. It follows that $Q' \in \mathcal{Q}$.  Again, we have that $P_1(Q') \cup \{b(Q')\} = P_1(Q') \cup \{b'\} \subsetneq P_1(Q) \cup \{b\}$, contradicting the choice of $Q$. This proves \eqref{st:nbrs2}. 

    \medskip

Let $R''$ be a shortest path from $r_t$ to $b_j$ with interior in $P_j(Q) \setminus N[b])$. Let $P = r_1$-$R$-$r_t$-$(R''\setminus \{b_j\})$; so in particular, $N(b) \cap P = \emptyset$. Let $P'$ be the shortest subpath of $P$ containing $r_1$ as well as a neighbour of $b_i$ and a neighbour of $b_j$. Since $R''$ contains a neighbour of $b_j$ and $R^*$ contains a neighbour of $b_i$ by \eqref{st:bi}, the path $P'$ is well-defined. Let $p$ be the end of $P'$ not equal to $r_1$; and let $k \in \{2, 3\}$ be maximum such that $b_k$ is adjacent to $p$ and $b_k$ has no neighbour in $P' \setminus \{p\}$ (such $k$ exists by the choice of $P'$, as otherwise $P' \setminus \{p\}$ would be a better choice that $P'$). Let $k' \in \{2, 3\} \setminus \{k\}$. 

Suppose first that either $Q$ is a theta, or $Q$ is a pyramid and $k=3$. Then, there is a hole $H'$ in $G$, defined as $H' = r_1$-$P_1(Q)$-$b$-$b_k$-$p$-$P'$-$r_1$. The vertex $b_{k'}$ has two non-adjacent neighbours in $H'$, namely $b$ and a neighbour in $P'$. Since $G$ is clock-free, this is a contradiction. 

It follows that $Q$ is a pyramid and $k = 2$. Since at least one of $b_2, b_3$ has a neighbour in $R^*$ by \eqref{st:nbrs2}, it follows from the choice of $k$ that $b_3$ has a neighbour in $R^*$. But now \eqref{st:nbrs2} implies that the first neighbour of $b_2$ along $R^*$, traversed from $r_1$ to $r_t$, appears at the same time or before the first neighbour of $b_3$, which implies that $k = 3$, a contradiction. This concludes the proof. 
\end{proof}

\section{Central bags}

In the previous section, we showed that paws and certain seagulls lead to cutsets in (clock, diamond)-free graphs. In this section, we will set up the ``central bag method,'' which, under certain circumstances, allows us to decompose a graph along several cutsets simultaneously, obtaining a much simplified graph -- the \emph{central bag} -- as a result. Then, using the structure of the central bag, we show that it has small treewidth. Finally, we ``lift'' a certificate of small treewidth for the central bag to a certificate for the original graph by carefully reversing the decompositions. 

As a first step, let us describe the ``certificate'' of small treewidth. Rather than working with a tree decomposition, we will work with balanced separators: Let $G$ be a graph. A \emph{weight function} on $G$ is a function $w: V(G) \rightarrow [0, 1]$ such that $\sum_{v \in V(G)} w(v) = 1$. For $X \subseteq V(G)$, we write $w(X)$ for $\sum_{x \in X} w(x)$. 

Let $c \in [0,1]$ and let $G$ be a graph with weight function $w$. A set $X \subseteq V(G)$ is a \emph{$(w,c)$-balanced separator} for $G$ if for every component $D$ of $G \setminus X$, we have $w(D) \leq c$. The following two lemmas show that treewidth is closely related to the existence of balanced separators: 

\begin{lemma}[Harvey and Wood \cite{harvey2017parameters}; stated in this form in \cite{abrishami2021induced}]\label{lemma:bs-to-tw}
Let $G$ be a graph, let $c \in [\frac{1}{2}, 1)$, and let $k$ be a positive integer. If $G$ has a $(w, c)$-balanced separator of size at most $k$ for every weight function $w$ on $G$, then $\tw(G) \leq \frac{1}{1-c}k$. 
\end{lemma}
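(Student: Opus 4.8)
The plan is to construct a tree decomposition of $G$ of width at most $\tfrac{1}{1-c}k$ directly, by recursively pulling $G$ apart along balanced separators. I would run the recursion on pairs $(W,D)$ with $W\subseteq V(G)$ and $D$ a union of connected components of $G\setminus W$; the task at such a pair is to build a tree decomposition of $G[W\cup D]$ of the target width whose root bag contains $W$. The top-level call is $(\emptyset,V(G))$, and after first peeling off connected components we may assume $D$ is connected. There are two base cases: $D=\emptyset$ (output the single bag $W$), and --- this one is essential --- $|W\cup D|$ already at most the target bag size (output the single bag $W\cup D$).

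For the recursive step with $D$ large, I would apply the hypothesis to a weight function $w$ supported on $W\cup D$ to obtain a $(w,c)$-balanced separator $X$ with $|X|\le k$; then $X':=X\cap(W\cup D)$ separates $G[W\cup D]$. Take the root bag to be $W\cup X'$; let $D_1,\dots,D_m$ be the components of $G[W\cup D]\setminus(W\cup X')$; set $W_i:=N_G(D_i)$, noting that $W_i\subseteq W\cup X'$ and that $D_i$ is a single component of $G\setminus W_i$; recursively decompose each $(W_i,D_i)$; and join each resulting decomposition to the root by an edge. The three tree-decomposition axioms are routine to check, and since $w$ forces $X$ to meet $D$ (no component of $G\setminus X$ can absorb all of $D$ once $D$ carries enough $w$-weight), each $|D_i|<|D|$ and the recursion terminates.

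The heart of the argument --- and the source of both the exact constant $\tfrac{1}{1-c}$ and the hypothesis $c\ge\tfrac12$ --- is the choice of $w$ and the inductive bound it yields on the interface $|W|$ (and hence on the bag size $|W\cup X'|$). Taking $w$ uniform on $D$ makes termination transparent but lets $W$ grow by up to $k$ per level, which gives only a bound of order $k\log|V(G)|$. To get the linear bound one wants $w$ to also spread weight over $W$, so that the separator $X$ cuts $W$ into pieces of relative size at most $c$ and thus $|N_G(D_i)\cap W|$ shrinks by roughly a factor $c$ at each step; the governing recurrence $t\mapsto ct+k$ has fixed point $\tfrac{1}{1-c}k$, and together with the early-stopping base case this is what confines every bag to the target size. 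Making this amortization airtight --- in particular reconciling ``enough $w$-weight on $D$ to force progress'' with ``enough $w$-weight on $W$ to split it,'' which is exactly where $c\ge\tfrac12$ is needed --- is the one genuinely delicate step; the remainder is bookkeeping. (One can also deduce the statement from the duality between treewidth and brambles/havens, but the recursive construction above is the most direct route, and in any case the paper invokes Lemma~\ref{lemma:bs-to-tw} only as a black box.)
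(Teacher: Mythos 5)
The paper does not prove Lemma~\ref{lemma:bs-to-tw} at all; it is quoted as a known result of Harvey and Wood, so there is no internal proof to compare against, and your proposal has to stand on its own. It does not, because the step you explicitly defer (``making this amortization airtight'') is not mere bookkeeping: with the scheme you set up it actually fails to give the stated constant. In your recursion the bag at a node is $W\cup X'$, where $W$ is the inherited interface and $X'$ the new separator. Putting the weight uniformly on $W$ gives the recurrence $|W_i|\le c|W|+k$, whose stable invariant is $|W|\le\frac{k}{1-c}$; but then the bag has size up to $\frac{k}{1-c}+k$, i.e.\ width about $\frac{2-c}{1-c}k$ (for $c=\tfrac12$ this is the classical $3k$ bound, not the claimed $2k$). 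If instead you try to keep bags of size $\frac{k}{1-c}+1$, you need the invariant $|W|\le\frac{ck}{1-c}+1$, and the recurrence $c|W|+k$ preserves this only when $k(1-c)^2\le(1-c)^2$, i.e.\ $k\le 1$. So no choice of weight function within your framework closes the gap between the fixed point of $t\mapsto ct+k$ and the size of the bags, which always carry an extra $+k$ from the separator itself; the recursive route proves a weaker bound than the lemma states.

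The tight constant $\frac{1}{1-c}k$ comes from the duality you mention only in passing. Suppose $\tw(G)>\frac{k}{1-c}$; by the Seymour--Thomas bramble (or haven) duality there is a bramble $\mathcal{B}$ whose minimum hitting set $H$ has $|H|>\frac{k}{1-c}+1$. Put the uniform weight $w$ on $H$ and let $X$ be any set with $|X|\le k$. The bramble elements disjoint from $X$ are connected and pairwise touching, hence all lie in one component $C$ of $G\setminus X$, and then $(H\cap C)\cup X$ hits every element of $\mathcal{B}$; minimality of $H$ gives $|H\setminus C|\le k<(1-c)|H|$, so $w(C)>c$ and $X$ is not a $(w,c)$-balanced separator. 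This contradiction proves the lemma with the stated constant. Your proposal would be fine if the target were ``$\tw(G)=O(k)$'' (which is all this paper really needs from the lemma), but as a proof of the statement as written it is missing the essential idea.
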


\begin{lemma}[Cygan, Fomin, Kowalik, Lokshtanov, Marx, Pilipczuk, Pilipczuk, and Saurabh \cite{cygan2015parameterized}]
\label{lemma:tw-to-weighted-separator}
Let $G$ be a graph and let $k$ be a positive integer. If $\tw(G) \leq k$, then $G$ has a $(w, c)$-balanced separator of size at most $k+1$ for every $c \in [\frac{1}{2}, 1)$ and for every weight function $w$ on $G$.
\end{lemma}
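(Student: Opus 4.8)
The statement is the standard ``a tree decomposition of small width yields small balanced separators'' fact, and the plan is to prove it by a centroid-bag argument. Fix a tree decomposition $(T,\tau)$ of $G$ of width at most $k$, so $|\tau(t)|\le k+1$ for every $t\in V(T)$, and fix a weight function $w$ on $G$ and some $c\in[\tfrac12,1)$. I will produce a single node $t^*$ of $T$ such that $\tau(t^*)$ is a $(w,\tfrac12)$-balanced separator; since $c\ge\tfrac12$ this set is then also a $(w,c)$-balanced separator, and it has size at most $k+1$, which is exactly the claim.

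The first ingredient is the routine observation that the components of $G\setminus\tau(t)$ ``follow'' the components of $T-t$. For a node $t$ and a neighbour $t'$ of $t$ in $T$, let $T_{t,t'}$ be the component of $T-tt'$ containing $t'$ and set $A_{t,t'}=\bigl(\bigcup_{s\in T_{t,t'}}\tau(s)\bigr)\setminus\tau(t)$. Using only the axiom that $\{s: v\in\tau(s)\}$ induces a subtree of $T$, together with the edge axiom, one checks that, as $t'$ ranges over the neighbours of $t$, the sets $A_{t,t'}$ partition $V(G)\setminus\tau(t)$ and are pairwise anticomplete in $G$; hence every component $D$ of $G\setminus\tau(t)$ is contained in $A_{t,t'}$ for exactly one neighbour $t'$ of $t$.

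The second ingredient locates $t^*$ via a partial orientation of $T$: orient an edge $tt'$ from $t$ towards $t'$ precisely when $w(A_{t,t'})>\tfrac12$. The key point is that $A_{t,t'}$ and $A_{t',t}$ are disjoint: a vertex in both would lie in a bag on each side of the edge $tt'$ and therefore, by connectivity of its subtree of bags, in $\tau(t)\cap\tau(t')$, contradicting that it lies in $A_{t,t'}$, which excludes $\tau(t)$. Since $A_{t,t'}$ and $A_{t',t}$ are disjoint and $\sum_{v}w(v)=1$, at most one of $w(A_{t,t'})$, $w(A_{t',t})$ can exceed $\tfrac12$, so every edge of $T$ receives at most one orientation. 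As $T$ has one fewer edge than it has nodes, a counting of (node, outgoing-edge) incidences shows some node $t^*$ has no outgoing edge, i.e.\ $w(A_{t^*,t'})\le\tfrac12$ for every neighbour $t'$ of $t^*$.

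Combining the two ingredients finishes the proof: each component $D$ of $G\setminus\tau(t^*)$ lies in some $A_{t^*,t'}$, so $w(D)\le w(A_{t^*,t'})\le\tfrac12\le c$, and $|\tau(t^*)|\le k+1$. I do not expect a genuine obstacle here; the only steps needing care are verifying that the sets $A_{t,t'}$ partition $V(G)\setminus\tau(t)$ and are anticomplete, and the disjointness of $A_{t,t'}$ from $A_{t',t}$ — all of which are immediate consequences of the connectivity axiom of tree decompositions.
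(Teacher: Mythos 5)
Your proof is correct: the paper itself does not prove this lemma but simply cites it from Cygan et al., and your centroid-style argument (orienting each tree edge toward a side of weight more than $\tfrac12$ and taking the bag of a node with no outgoing edge) is exactly the standard argument behind the cited result. All the supporting claims you flag — that the sets $A_{t,t'}$ partition $V(G)\setminus\tau(t)$ and are pairwise anticomplete, and that $A_{t,t'}$ and $A_{t',t}$ are disjoint — do follow from the connectivity axiom as you say, so there is no gap.
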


Therefore, to prove Theorem \ref{thm:mainnew}, it suffices to show that for every $t \in \mathbb{N}$, there exists a $k = k(t)$ such that for every $t$-clean (clock, diamond)-free graph $G$ and every weight function $w$ on $G$, there is a \whalf-balanced separator of size at most $k$ in $G$. In particular, we can fix a weight function $w$ and assume (for a contradiction) that for every set $X$ of size at most $k$, at least one (and therefore exactly one) component of $G \setminus X$ has weight more than $\frac{1}{2}$. 

Let us now turn to the cutsets we use to create the central bag. Let $G$ be a graph, and let $w$ be a weight function on $G$. For a set $X$ which is not a \whalf-balanced separator of $G$, we define its \emph{canonical separation} $S_{w, G}(X) = (A, C, B)$ where $B$ is the unique component $D$ of $G \setminus X$ with $w(D) > \frac{1}{2}$, $C = X$, and $A = V(G) \setminus (B \cup C)$. We note that $A$ is anticomplete to $B$. 

Let $G$ be a graph, and let $w$ be a weight function on $G$. Let $\mathcal{X}$ be a set of subsets of $V(G)$, none of which is a \whalf-balanced separator of $G$. Then, we define the \emph{central bag} of $G$ and $w$ with respect to $\mathcal{X}$ as 
$$\beta(G, w, \mathcal{X}) = \bigcap_{X \in \mathcal{X} \textnormal{ with } S_{w, G}(X) = (A, C, B)} (B \cup C).$$
In other words, we delete all the ``$A$-sides'' of canonical separations corresponding to sets $X \in \mathcal{X}$. 

We would like to say that each component of $G \setminus \beta(G, w, \mathcal{X})$ is contained in $A$ for some $S_{w, G}(X) = (A, C, B)$ with $X \in \mathcal{X}$. To arrange this, we define the following. Let us say that two separations $(A, C, B)$ are $(A', C', B')$ of a graph $G$ are \emph{loosely non-crossing} if there is no path in $G$ with one end in $A \cap C'$, the other end in $A' \cap C$, and with interior in $A \cap A'$. We observe:

\begin{lemma} \label{lem:noncross}
    Let $G$ be a graph, and let $w$ be a weight function for $G$.  Let $\mathcal{X}$ be a set of subsets of $V(G)$, none of which is a \whalf-balanced separator of $G$. Suppose that for all $X, X' \in \mathcal{X}$, the canonical separations $S_{w, G}(X)$ and $S_{w, G}(X')$ are loosely non-crossing. Then, for every component $D$ of $G \setminus \beta(G, w, \mathcal{X})$, there is an $X \in \mathcal{X}$ such that $D \subseteq A$, where $(A, C, B) = S_{w, G}(X)$. 
\end{lemma}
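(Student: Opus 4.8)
The plan is to argue by contradiction. Suppose that some component $D$ of $G\setminus\beta$, where $\beta=\beta(G,w,\mathcal{X})$, satisfies $D\not\subseteq A_X$ for every $X\in\mathcal{X}$; here and below I write $(A_X,C_X,B_X)=S_{w,G}(X)$. Fix a vertex $v_0\in D$. The key idea is a ``growing'' process: I will construct a sequence $X_0,X_1,\dots\in\mathcal{X}$ together with the sets $D_i$, defined as the component of $G[D\cap A_{X_i}]$ containing $v_0$, and show that $D_0\subsetneq D_1\subsetneq\cdots$. Since all the $D_i$ are subsets of the finite set $D$, this is the desired contradiction.

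I would first record two easy facts used throughout. (1)~Since $V(G)\setminus\beta=\bigcup_{X\in\mathcal{X}}A_X$, every vertex of $D$ lies in some $A_X$; applying this to $v_0$ gives a valid choice of $X_0$, and $D_0$ is then well defined. (2)~For each $i$, because $D_i\subseteq A_{X_i}$ while $D\not\subseteq A_{X_i}$, we have $\emptyset\neq D_i\subsetneq D$, so the connectedness of $D$ gives a vertex $p_1\in D\setminus D_i$ with a neighbour $p_0\in D_i$; since $D_i$ is a full component of $G[D\cap A_{X_i}]$ and $A_{X_i}$ is anticomplete to $B_{X_i}$, such a $p_1$ must lie in $D\cap C_{X_i}$. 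As $p_1\in D\subseteq V(G)\setminus\beta$, there is $X_{i+1}\in\mathcal{X}$ with $p_1\in A_{X_{i+1}}$, and $X_{i+1}\neq X_i$ since $p_1\in C_{X_i}$.

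The crux is to show $D_i\subseteq A_{X_{i+1}}$. Granting it, $D_i\cup\{p_1\}$ is a connected subset of $D\cap A_{X_{i+1}}$ containing $v_0$, hence contained in $D_{i+1}$; and $p_1\notin A_{X_i}\supseteq D_i$ forces $D_i\subsetneq D_{i+1}$, so the sequence continues forever, the contradiction. To prove $D_i\subseteq A_{X_{i+1}}$, observe first that $p_0$ is adjacent to $p_1\in A_{X_{i+1}}$, so $p_0\notin B_{X_{i+1}}$; if $p_0\in C_{X_{i+1}}$ then the single edge $p_0p_1$ already has one end in $A_{X_i}\cap C_{X_{i+1}}$, the other in $A_{X_{i+1}}\cap C_{X_i}$, and empty interior, contradicting that $S_{w,G}(X_i)$ and $S_{w,G}(X_{i+1})$ are loosely non-crossing; so $p_0\in A_{X_{i+1}}$. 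Now suppose for contradiction $D_i\not\subseteq A_{X_{i+1}}$, pick $z\in D_i\setminus A_{X_{i+1}}$, take a path $\pi$ in the connected graph $G[D_i]$ from $p_0$ to $z$, and let $q$ be the first vertex of $\pi$ outside $A_{X_{i+1}}$, with predecessor $q'$. Then $q\notin B_{X_{i+1}}$ (it is adjacent to $q'\in A_{X_{i+1}}$), so $q\in C_{X_{i+1}}$; since $\pi\subseteq D_i\subseteq A_{X_i}$ we get $q\in A_{X_i}\cap C_{X_{i+1}}$, and the portion of $\pi$ from $p_0$ to $q'$ lies in $A_{X_i}\cap A_{X_{i+1}}$. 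Following the edge $p_1p_0$ and then $\pi$ from $p_0$ to $q$ then produces a path (simple, since $p_1\in C_{X_i}$ is disjoint from $\pi\subseteq A_{X_i}$) with one end in $A_{X_{i+1}}\cap C_{X_i}$, the other in $A_{X_i}\cap C_{X_{i+1}}$, and interior in $A_{X_i}\cap A_{X_{i+1}}$ --- again contradicting that $S_{w,G}(X_i)$ and $S_{w,G}(X_{i+1})$ are loosely non-crossing. Hence $D_i\subseteq A_{X_{i+1}}$.

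I expect the main obstacle to be precisely this last step: keeping track of which of $A_X$, $C_X$, $B_X$ each relevant vertex lies in, so that the path assembled from $p_1p_0$ and $\pi$ provably has its two ends in the correct ``$A\cap C'$'' sets and its interior in the overlap $A_{X_i}\cap A_{X_{i+1}}$ --- with the repeated use of the fact that a vertex adjacent to $A_X$ cannot lie in $B_X$ --- and checking that this walk is genuinely a simple path. The remaining ingredients (initializing the sequence from $v_0$, the two anticompleteness observations, and terminating by finiteness of $D$) should be routine.
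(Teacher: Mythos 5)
Your proof is correct and is essentially the paper's argument: the paper makes a single extremal choice (a component $D_X$ of $D\cap A_X$ of maximum size over all $X\in\mathcal{X}$) and derives one contradiction, whereas you phrase it as an iterative growing process and invoke the loosely non-crossing condition twice per step (once for the edge $p_0p_1$ to place $p_0$, once for the longer path ending at $q$). The decisive construction in both cases is the same — a path with one end in $A\cap C'$, the other in $A'\cap C$, and interior in $A\cap A'$ — so there is no substantive difference and no gap.
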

\begin{proof}
    From the definition of $\beta(G, w, \mathcal{X})$, it follows that there is an $X \in \mathcal{X}$ such that $D \cap A \neq \emptyset$, where $(A, C, B) = S_{w, G}(X)$. Among all such $X$, let us choose $X$ and a component $D_X$ of $D \cap A$ with $|D_X|$ as large as possible; and fix $(A, C, B) = S_{w, G}(X)$. 

    If $D_X = D$, then the lemma holds; so we may assume that there is a vertex $v' \in D \cap N(D_X)$ (as $D$ is connected). Again from the definition of $\beta(G, w, \mathcal{X})$, it follows that there is an $X' \in \mathcal{X}$ such that $v' \in A'$, where $(A', C', B') = S_{w, G}(X')$. 

    Let $D_{X'}$ be the component of $(D_X \cup \{v'\}) \cap A'$ containing $v'$ (which exists, as $v' \in A'$). From the choice of $X$ and $D_X$, it follows that $D_{X'}$ does not contain all vertices of $D_X$. Since $D_X \cup \{v'\}$ is connected and $D_{X'}$ is a proper subset of $D_X \cup \{v'\}$, it follows that there is a vertex $v \in D_X \setminus D_{X'}$ with a neighbour in $D_{X'}$. From the choice of $D_{X'}$, it follows that $v \not\in A'$. 

    Now, let $P$ be a path from $v$ to $v'$ with interior in $D_{X'}$. Then: 
    \begin{itemize}
        \item $P \setminus \{v\} \subseteq D_{X'} \subseteq A'$ and $P \setminus \{v'\} \subseteq D_{X'} \setminus \{v'\} \subseteq D_X \subseteq A$; 
        \item $v$ is not in $A'$, but $v$ has a neighbour in $A'$ (along $P$), and so $v \in C'$; 
        \item $v'$ is not in $A$, but $v'$ has a neighbour in $A$ (along $P$), and so $v \in C$. 
    \end{itemize}
    This is shows that $(A, C, B)$ and $(A', C', B')$ are not loosely non-crossing, contrary to our assumption. 
\end{proof} 

Given two sets $X, X'$ which are not \whalf-balanced separators of $G$, we say that $X$ is a \emph{$(w, G)$-shield} for $X'$ (a notion also used in, for example, \cite{abrishami2021induced, abrishami2021submodular, abrishami2022induced2}) if, writing $S_{w, G}(X) = (A, C, B)$ and $S_{w, G}(X') = (A', C', B')$, we have that one of the following holds: 
\begin{itemize}
    \item $B \cup C \subsetneq B' \cup C'$; or 
    \item $B \cup C = B' \cup C'$ and $B' \subsetneq B$.
\end{itemize}
From this definition, it is immediate that for every set $\mathcal{X}$ of subsets of $V(G)$ none of which is a \whalf-balanced separator of $G$, being a $(w, G)$-shield is a partial order on $\mathcal{X}$. We now consider the set of all ``minimal'' elements of $\mathcal{X}$ with respect to this order. Explicitly, we define 
$$\core_{w, G}(\mathcal{X}) = \{X' \in \mathcal{X} : \textnormal{there is no } X \in \mathcal{X} \textnormal{ which is a } (w, G) \textnormal{-shield for } X'\}.$$
It follows that for every $X' \in \mathcal{X} \setminus \core_{w, G}(\mathcal{X})$, there is an $X \in \core_{w, G}(\mathcal{X})$ which is a shield for $X'$. 

The idea is that if $X$ is a shield for $X'$, then $X$ is ``strictly more useful'' than $X'$ (given that the sets $B$ and $B'$ are the large components, and we would like the large components to be as small as possible); so it suffices to consider cutsets $X \in \core_{w, G}(\mathcal{X})$. 

So far, this description is common to applications of the central bag method; the main difference is in the types of sets $X$ we use for canonical separations, which we describe in the next section. 

\section{2-clique cutsets}

Let $G$ be a diamond-free graph. For a clique $K \subseteq V(G)$ and a set $A \subseteq V(G)$, let us define $c_A(K)$ as follows: 
\begin{itemize}
    \item If $|K| \leq 1$, then $c_A(K) = K$.
    \item Otherwise, let $x, y \in K$ be distinct. Then $c_A(K) =     K \cup (N(x) \cap N(y) \cap A)$.
\end{itemize}
Note that, by Lemma \ref{lem:diamond}, the set $c_A(K)$ is a (well-defined) clique. 

Let $G$ be a diamond-free graph and let $w$ be a weight function on $G$. Let $K_1, K_2$ be cliques in $G$, and suppose that $X = K_1 \cup K_2$ is not a \whalf-balanced separator of $G$. Let $(A, C, B) = S_{w, G}(X)$. Then, we define $\closure(K_1, K_2) = c_{A \cup C}(K_1 \cap N(B)) \cup c_{A \cup C}(K_2 \cap N(B))$. We observe that:
\begin{itemize}
    \item The set $\closure(K_1, K_2)$ is the union of at most two cliques $K_1', K_2'$ where $\closure(K_1', K_2') = \closure(K_1, K_2)$.  
    \item $B$ is a component of $G \setminus \closure(K_1, K_2)$, and therefore, $\closure(K_1, K_2)$ is not a \whalf-balanced separator of $G$.
    \item Writing $(A', C', B') = S_{w, G}(\closure(K_1, K_2))$, we have $B' = B$. 
\end{itemize}

For a graph $G$, we denote by $\omega(G)$ the size of the largest clique in $G$. For a diamond-free graph $G$ and a weight function $w$ such that $G$ has no \whalf-balanced separator of size at most $4\omega(G)$, let us define $$\mathcal{X}(G) = \{\closure(K_1, K_2) : K_1, K_2 \textnormal{ are cliques in } G\}.$$

\begin{theorem}\label{thm:noncrossing}
    Let $G$ be a (clock, diamond)-free graph, and let $w$ be a weight function on $G$. Suppose that $G$ contains no \whalf-balanced separator of size at most $4\omega(G)$ and that $G$ has no  star cutset. Let $X, X' \in \core_{w, G}(\mathcal{X}(G))$. Then $S_{w, G}(X)$ and $S_{w, G}(X')$ are loosely non-crossing. 
\end{theorem}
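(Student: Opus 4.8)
The plan is to argue by contradiction: assume $S := S_{w,G}(X) = (A,C,B)$ and $S' := S_{w,G}(X') = (A',C',B')$ are not loosely non-crossing, fix a shortest path $P$ in $G$ with one end $u\in A\cap C'$, the other end $u'\in A'\cap C$, and $P^*\subseteq A\cap A'$, and use $P$ to manufacture a cutset $X''\in\mathcal{X}(G)$ that is a $(w,G)$-shield for $X$ or for $X'$ --- contradicting that both $X$ and $X'$ lie in $\core_{w,G}(\mathcal{X}(G))$.

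First I would assemble the structural facts that power the construction. Since $w(B),w(B')>\tfrac12$ we get $w(B\cap B')>0$, so $B\cap B'\neq\emptyset$ and $B\cup B'$ is connected; since $A$ is anticomplete to $B$ and $A'$ to $B'$, we have $N(B)\subseteq C$ and $N(B')\subseteq C'$, and $N(B)$ is covered by the at most two cliques whose union is $C$. Because a clique cannot meet both sides of an anticomplete pair, every clique inside $C$ that meets $A'$ is contained in $A'\cup C'$ and every one meeting $B'$ is contained in $B'\cup C'$ (and symmetrically, interchanging $S$ and $S'$). So, writing $C=L_1\cup L_2$ and $C'=L_1'\cup L_2'$ as unions of two ``closed'' cliques (using the first displayed property of $\closure$) and relabelling so that $u'\in L_1$ and $u\in L_1'$, we obtain $L_1\subseteq A'\cup C'$ and $L_1'\subseteq A\cup C$.

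The uncrossing move itself is to replace $X$ by a tight wall around the heavy component of $B\cap(B'\cup C')$: discard the ``$A'$-facing'' clique $L_1$ of $X$ together with the portion of $B$ lying in $A'$, splice in the piece of $X'$ that already blocks that region, and let $X''$ be the $\closure$ of the resulting pair of cliques. Diamond-freeness and Lemma~\ref{lem:diamond} are what guarantee that the spliced-in set is again a union of at most two cliques and that $\closure$ is well defined; clock-freeness --- applied, just as in the proofs of Theorems~\ref{thm:paw} and~\ref{thm:seagull}, to holes assembled from $P$, from sub-paths of $B$, and from $C$ and $C'$ --- is what controls how outside vertices may attach, so that $B\cap(B'\cup C')$ (or its heavy component) really is a component of $G\setminus X''$. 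The path $P$ is the certificate that the change is a genuine improvement: it exhibits $u'\in C$ strictly on the light side of $X''$, making $X''$ a proper (not merely tied) shield.

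The case analysis is where the substance --- and the main obstacle --- lies. The construction bifurcates on how the cliques of $C$ sit relative to $S'$, and dually. In the degenerate cases, where $C\subseteq A'\cup C'$ (forcing $B'\subseteq B$ by connectivity of $B'$ and anticompleteness of $A$ and $B$), and symmetrically, I would argue either by a short diamond/clock argument --- tracing a vertex of $X'\cap A$, which cannot lie in $N(B')$, through the definition of $\closure$ down to a diamond on four vertices --- or by the observation that a tight closed wall around a fixed heavy component is unique up to the shield order, so no such crossing path can exist. The substantive case is when $C$ and $C'$ genuinely straddle one another; there the reroute above applies, and the difficulty is twofold: (i) verifying that the rerouted set is actually a cutset with the claimed heavy component, for which one needs $P$ together with clock-freeness to forbid $(B\cup C)\cap A'$ from re-attaching to $B\cap(B'\cup C')$ except through $C'$; and (ii) the weight bookkeeping $w\big(B\cap(B'\cup C')\big)>\tfrac12$ required for $X''$ to strictly beat $X$ in the shield order. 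When (ii) fails, $(B'\cup C')\cap A$ is light, and one runs the symmetric reroute to improve $X'$ instead, so in every case one of $X,X'$ is shown to lie outside $\core_{w,G}(\mathcal{X}(G))$ --- the contradiction. Throughout, the standing hypotheses that $G$ has no star cutset and no $(w,\tfrac12)$-balanced separator of size at most $4\omega(G)$ are in force, and besides keeping $\mathcal{X}(G)$ and $S_{w,G}(\cdot)$ well behaved they let the argument terminate early should the reroute ever output a star cutset or a small balanced separator.
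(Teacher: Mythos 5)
Your setup matches the paper's: negate loose non-crossing, take a crossing path $P$ with ends $r'\in A\cap K_1'$ and $r\in A'\cap K_1$ and interior in $A\cap A'$, observe that the clique of $C$ meeting $A'$ is disjoint from $B'$ (and dually), and try to beat $X$ or $X'$ in the shield order by building a two-clique wall around a heavy component sitting inside $B\cap B'$. Two remarks on the engine. First, you do not need any weight bookkeeping or a ``symmetric reroute'': since $|C\cup C'|\le 4\omega(G)$ and $G$ has no \whalf-balanced separator of that size, $G\setminus(C\cup C')$ has a component $B^*$ of weight more than $\tfrac12$, and $w(B),w(B')>\tfrac12$ forces $B^*\subseteq B\cap B'$; this is exactly how the paper gets the heavy component for free.

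The genuine gap is the case in which the two ``far-side'' cliques intersect, i.e.\ $K_1\cap K_1'\neq\emptyset$ (writing $C=K_1\cup K_2$, $C'=K_1'\cup K_2'$ with $r\in A'\cap K_1$, $r'\in A\cap K_1'$). Then $N(B^*)\subseteq K_2\cup K_2'\cup(K_1\cap K_1')$, and if the vertex $v\in K_1\cap K_1'$ has neighbours in $B^*$, the wall around the heavy component is \emph{not} a union of two cliques, so no element of $\mathcal{X}(G)$ (which by definition only contains closures of pairs of cliques) can serve as your $X''$; the splice-and-reroute strategy cannot produce a shield here, and your fallback remarks (a ``short diamond/clock argument'' or ``uniqueness of tight walls up to the shield order'') do not address this configuration. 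The paper handles it by an entirely different device: it shows $|K_1\cap K_1'|=1$, picks $t\in K_1$ and $t'\in K_1'$ each distinct from $v$ (or equal to $r$, $r'$) with neighbours in $B$, $B'$ respectively, joins them by a path $Q$ through the connected set $B\cup B'$ and by a path $P'$ through the crossing path $P$, and checks that $t$-$P'$-$t'$-$Q$-$t$ is a hole avoiding $v$ in which $v$ has the two non-adjacent neighbours $t,t'$ --- a clock, contradiction. Only the complementary case $K_1\cap K_1'=\emptyset$ is settled by a shield argument (taking $\closure(K_2\cap N(B^*),K_2'\cap N(B^*))$, and even there one needs the no-star-cutset hypothesis and the closure properties to show $r$ lands on the small side and the containment is strict). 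Without the clock construction for the intersecting case, your proof cannot be completed along the proposed lines.
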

\begin{proof}
Suppose not. Let $(A, C, B) = S_{w, G}(X)$ and $(A', C', B') = S_{w, G}(X')$. Let $K_1, K_2$ be cliques such that $C = K_1 \cup K_2 = \closure(K_1, K_2)$, and let $K_1', K_2'$ be cliques such that $C' = K_1' \cup K_2' = \closure(K_1', K_2')$. 

Since $(A, C, B)$ and $(A', C', B')$ are not loosely non-crossing, and by symmetry, we may assume that there is a path $P$ from $r' \in A \cap K_1'$ to $r \in A' \cap K_1$ with interior in $A \cap A'$. 

Since $G$ has no \whalf-balanced separator of size at most $4 \omega(G)$, it follows that there is a component $B^*$ of $G \setminus (C \cup C')$ with $w(B^*) > \frac{1}{2}$. 

Since $w(B) > \frac{1}{2}$ and $w(B') > \frac{1}{2}$ from the definition of $S_{w,G}(\cdot)$, it follows that $B^* \cap B' \neq \emptyset$ and $B^* \cap B \neq \emptyset$, and therefore, since $B^*$ is disjoint from $C \cup C'$, it follows that $B^* \subseteq B \cap B'$. 

\sta{\label{st:nbrsbig}We have that $N(B^*) \subseteq K_2 \cup K_2' \cup (K_1 \cap K_1')$.}
Suppose not. Since $B^* \subseteq B \cap B'$, it follows that $N(B^*) \subseteq (B \cap C') \cup (B' \cap C) \cup (C \cap C')$. Since $r \in A' \cap K_1$, it follows that $B' \cap C \subseteq K_2$; similarly, $B \cap C' \subseteq K_2'$. Finally, $(C \cap C') \setminus (K_2 \cup K_2') \subseteq K_1 \cap K_1'$. This implies \eqref{st:nbrsbig}. 

\sta{\label{st:k1k2} The set $K_1 \cap K_1'$ is non-empty.}
Suppose not. By \eqref{st:nbrsbig}, it follows that $N(B^*) \subseteq K_2 \cup K_2'$. Let $K_3, K_4$ be cliques such that $C^* = K_3 \cup K_4 = \closure(K_3, K_4) = \closure(K_2 \cap N(B^*), K_2'\cap N(B^*))$. Letting $A^* = V(G) \setminus (C^* \cup B^*)$, we have that $S_{w, G}(C^*) = (A^*, C^*, B^*)$. We also have $C^* \in \mathcal{X}(G)$. We notice that: 
\begin{itemize}
    \item $C^* \cup B^* \subseteq B \cup C$: Suppose not. Clearly, $B^* \cup N(B^*) \subseteq B \cup C$. Therefore, it follows that there is a vertex $x$ in $K_3 \cup K_4$ which is not in $B \cup C$. From the definition of $K_3$ and $K_4$, if follows that either $x$ has at least two neighbours in $K_2 \cap N(B^*)$ (but then $x \in K_2 \subseteq C$) or $x$ has at least two neighbours in $K_2' \cap N(B^*)$ (but then $x \in K_2'$). We conclude that $x \in K_2' \cap A$. Since $r' \in A \cap K_1'$, and since $K_1', K_2'$ are cliques, it follows that $C' \subseteq A \cup C$. Therefore, $C' \cap B = \emptyset$. Since $B$ is connected and $B \cap B' \neq \emptyset$, we conclude that $B \subseteq B'$. Now, since $r \in A' \cap K_1$ has no neighbour in $B$, it follows that $r \in c_{A \cup C}(N(B) \cap K_1)$, and so $r$ has at least two neighbours in $N(B) \cap K_1 \subseteq C \cap C'$. Since $K_1 \cap K_1 = \emptyset$, it follows that $N(B) \cap K_1 \subseteq K_2'$. But then $r \in c_{A' \cup C'}(K_2')$, and so $r \in C'$, a contradiction. 
    \item The vertex $r$ is in $A^*$: Suppose not. Since $r \not\in B^* \cup N(B^*)$ (as $B^* \subseteq B$ and $N(B^*) \subseteq B \cup C$), it follows that $r \in C^*$ and $r$ has at least two neighbours in one of $K_2 \cap N(B^*)$ and $K_2' \cap N(B^*)$. Since $r \in A'$, it follows that $r \not\in c_{A' \cup C'}(K_2')$, and so $r$ does not have two or more neighbours in $K_2'$. It follows that $r$ has two or more neighbours in $K_2 \cap N(B^*)$. Then, as $G$ is diamond-free, it follows that $r$ is adjacent to every vertex in $K_2$. Since $r$ is also adjacent to every vertex in $K_1$ (as $r \in K_1$ and $K_1$ is a clique), it follows that $C$ is a star cutset (separating $r'$ from $B$). This contradicts the assumption that $G$ has no star cutset. 
\end{itemize}
Putting the above two items together, it follows that $C^* \cup B^* \subsetneq B \cup C$, and so $C^*$ is a $(w, G)$-shield for $C = X$, a contradiction to the assumption that $X \in \core_{w, G}(\mathcal{X}(G))$. This proves \eqref{st:k1k2}.

\medskip 

Since $r \in A'$, it follows that $r$ has at most one neighbour in $K_1'$. But $r$ is adjacent to every vertex in $K_1 \cap K_1'$ as $r \in K_1 \setminus K_1'$; and so by \eqref{st:k1k2}, it follows that $|K_1 \cap K_1'| = 1.$ Let $v \in K_1 \cap K_1'$.

\begin{figure}[t]
    \centering
    \includegraphics[width=0.4\textwidth]{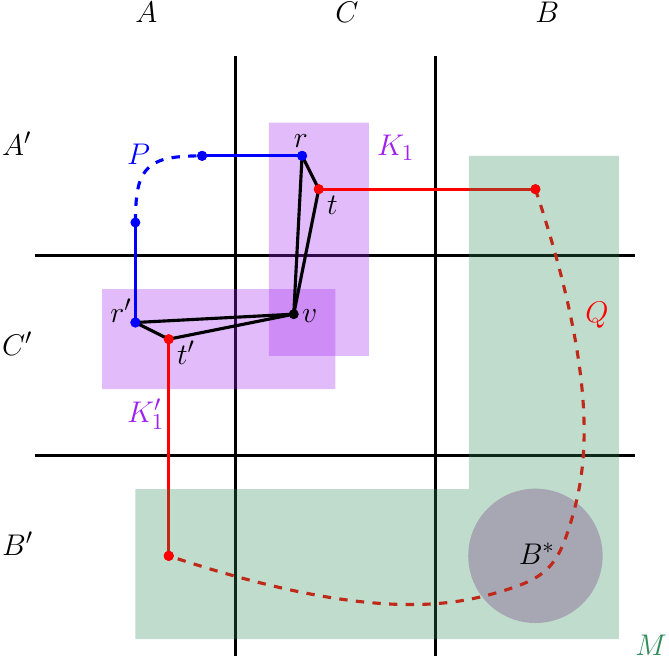}
    \caption{The proof of Theorem \ref{thm:noncrossing}. \pictured}
    \label{fig:crossing}
\end{figure}

Next, let us define $M = B \cup B'$. Since $B$ and $B'$ are connected, and have non-empty intersection $B^*$, it follows that $M$ is connected. Moreover, since $K_1 \cup K_1' \subseteq (C \cup A) \cap (C' \cup A')$, it follows that $K_1 \cup K_1'$ is disjoint from $M$. 

We define a vertex $t$ as follows: Since $r \in c_{A \cup C}(K_1 \cap N(B))$, it follows that either $r$ has a neighbour in $B$, and we let $t = r$; or the set $K_1 \cap N(B)$ contains at least two vertices, and we choose $t \in (K_1 \cap N(B)) \setminus \{v\}$. Analogously, we define a vertex $t'$ as follows: Since $r' \in c_{A' \cup C'}(K_1' \cap N(B'))$, it follows that either $r'$ has a neighbour in $B'$, and we let $t' = r'$; or the set $K_1' \cap N(B')$ contains at least two vertices, and we choose $t' \in (K_1' \cap N(B')) \setminus \{v\}$.

We note that if $r \neq t$, then $r$ has no neighbour in $M$ (as $r$ has no neighbour in $B$ from the choice of $t$, and $r$ has no neighbour in $B'$ since $r \in A'$); an analogous statement holds for $r'$. Let $Q$ be a path from $t$ to $t'$ with interior in $M$. 

Since both $t$ and $t'$ have a neighbour in $P$, we choose $P'$ to be a path from $t$ to $t'$ with interior contained in $P$. 

We claim that $H = t$-$P'$-$t'$-$Q$-$t$ is a hole: No vertex in $P^*$ has a neighbour in $M$, and if $r$ is in the interior of $P'$, then $r \neq t$ and so $r$ has no neighbour in $M$; and similarly for $r'$. Therefore, there are no edges from $P'^*$ to $Q^*$; and both $P'$ and $Q$ are induced paths. 

Moreover, we claim that $v \not\in H$: From the choices of $t$ and $t'$, we have that $v \neq t, t'$; and since both $P$ and $Q$ are disjoint from $C \cap C'$, it follows that $P'$ and $Q$ are disjoint from $\{v\}$. 

Since $t \in K_1$ and $t' \in K_1'$, it follows that $t, t'$ are two distinct neighbours of $v$ in $H$. Moreover, since $t \not\in c_{A' \cup C'}(K_1')$, it follows that $t$ has at most one neighbour in $K_1'$, namely $v$; so $t$ is non-adjacent to $t'$. This implies that $\{v\} \cup H$ is a clock, which is a contradiction and completes the proof.
\end{proof}

\section{Inside the central bag} \label{sec:inside}

Throughout this section, we make the following assumption: 
\begin{assumptions}\label{as:centralbag}
Let $G$ be a (clock, diamond)-free graph with no star cutset and let $w$ be a weight function on $G$. Suppose that $G$ contains no \whalf-balanced separator of size at most $4\omega(G)$. Let $\mathcal{X}(G)$ be as in the previous section, and let $\beta = \beta(G, w, \core_{w, G}(\mathcal{X}(G)))$.
\end{assumptions}
Following the central bag method, we now have two goals: 
\begin{itemize}
    \item Show that $\beta$ has a small balanced separator; and
    \item ``Lift'' this separator to $G$. 
\end{itemize}
In pursuit of the second goal, it is helpful to extend the central bag $\beta$ to incorporate certain bits from each component of $G \setminus \beta$. This will help with lifting, as it prevents these components from creating arbitrary connections that we do not ``see'' in $\beta$. Instead, we keep certain ``marker paths'' to record those connections. To make our lives easier, we would like to choose marker paths that are as simple as possible; the following lemma helps with this. 

\begin{lemma}\label{lem:marker}
    We assume that Assumption \ref{as:centralbag} holds. Let $X \in \mathcal{X}(G)$. Let $(A, C, B) = S_{w, G}(X)$ and let $D$ be a component of $A$. Let $K_1, K_2$ be cliques such that $X = K_1 \cup K_2 = \closure(K_1, K_2).$ Then there is a path $P$ with ends $x, y \in X$ and interior in $D$ such that the following hold: 
    \begin{itemize}
        \item The path $P$ has at least three vertices (in other words, $x$ and $y$ are not adjacent); and
        \item Every vertex in $P^*$ has degree 2 in the graph $G[P \cup X]$ (and is therefore not contained in a triangle in $G[P \cup X]$). 
    \end{itemize}
\end{lemma}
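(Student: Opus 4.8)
The goal is, given a component $D$ of the $A$-side of the canonical separation for some $X = K_1 \cup K_2 = \closure(K_1, K_2) \in \mathcal{X}(G)$, to find a "clean" marker path through $D$: one with nonadjacent ends in $X$ and interior vertices of degree exactly $2$ in $G[P \cup X]$. The starting point is the observation that, since $X = \closure(K_1, K_2)$, every vertex of $K_i$ that has a neighbour in $B$ (in particular the vertices forced in by the closure operation) sits in $N(B)$, and more importantly $D$ itself must have neighbours in $X$ in a controlled way. First I would argue that $D$ has neighbours in at least two distinct "slots" of $X$ that are nonadjacent — concretely, I want two nonadjacent vertices $x, y \in X$ each with a neighbour in $D$. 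If every neighbour of $D$ in $X$ lay inside a single clique (one of $K_1'$, $K_2'$ after re-clustering, or $N(D) \cap X$ being a clique), then $N(D) \cap X$ would be a clique cutset-like set separating $D$ from $B$; combined with the no-star-cutset hypothesis and Lemmas \ref{startoclique}/\ref{diamondlemma} this should be contradictory, or at least force $D$ to be empty, which it is not. So $N(D) \cap X$ is not a clique, giving nonadjacent $x, y \in N(D) \cap X$.

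Next I would take a path $P_0$ from $x$ to $y$ with interior in $D$ (exists since $D$ is connected and both have neighbours in $D$), and then choose $P$ to be a shortest path with both ends in $X$, nonadjacent ends, and interior in $D$ — more precisely, among all pairs of nonadjacent vertices of $X$ joined by a path through $D$, pick one minimizing the path length. I then need to verify the degree-$2$ condition for interior vertices $p \in P^*$ in $G[P \cup X]$. Its two $P$-neighbours already give degree $\geq 2$. Suppose $p$ has a third neighbour: either (a) another vertex of $P$, which would give a chord of $P$ and a shortcut contradicting minimality — unless the chord is incident to an end of $P$, but even then using the chord yields a shorter path between two vertices of $X$, and I must check the resulting ends stay nonadjacent (if they become adjacent, then the original path had length essentially $3$ and I handle that small case directly, or re-route); or (b) a vertex $z \in X \setminus \{x, y\}$. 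In case (b) I would use the structure of $X$: $X$ is the union of two cliques $K_1, K_2$, and $G$ is diamond-free, so the interactions of $p$ with $X$ are constrained. If $z$ lies in the same clique as $x$ or $y$, then $p$ together with a sub path of $P$ and the clique edge produces a shorter qualifying path, contradicting minimality. The remaining possibility is that $p$ has neighbours in both $K_1$ and $K_2$; here I would derive a contradiction with clock-freeness or diamond-freeness by exhibiting a short cycle with a chord, or again by the minimality of $P$ after re-routing through $z$.

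The main obstacle I anticipate is the bookkeeping in case (b) — ruling out an interior vertex $p$ with a neighbour $z$ in $X$ off the path. The subtlety is that replacing part of $P$ by the edge $pz$ produces a new path whose ends are $z$ (in $X$) and one original end, and I need that new pair to still be nonadjacent in order to contradict minimality of $P$; when it is adjacent, the path $P$ must be quite short and I would dispatch those base cases by hand, probably using that $|X| \le 2\omega(G)$ together with the fact (from the closure definition) that the "extra" vertices of $K_i \cap N(B)$ beyond two chosen generators are common neighbours of two vertices, hence cannot themselves be endpoints of a clean $2$-vertex-gap path to $D$ without creating a diamond. A secondary annoyance is making the first step (nonadjacent $x,y$ with neighbours in $D$) airtight: I should phrase it as "if $N(D) \cap X$ is a clique then $N(D)$ is a clique (since $N(D) \subseteq X$, as $D$ is a component of $A$ and $A$ is anticomplete to $B$, so all of $D$'s outside-neighbours lie in $C = X$), making $N(D)$ a clique cutset separating $D$ from $B$; but a clique cutset is in particular contained in $N[z]$ for any $z$ in it, i.e.\ a star cutset, contradicting the hypothesis." That clean reduction is what makes the opening move work.
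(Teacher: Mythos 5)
Your opening is exactly the paper's: since a non-empty clique cutset is contained in $N[z]$ for any $z$ in it, the no-star-cutset hypothesis forces $N(D)\subseteq X$ not to be a clique, so a shortest path $P$ through $D$ with non-adjacent ends $x\in K_1$, $y\in K_2$ exists and is induced. The gap is in the crucial step, ruling out a vertex $z\in X\setminus\{x,y\}$ with a neighbour $p\in P^*$. Your plan rests entirely on re-routing plus minimality, and this does not suffice. First, even when the re-routed ends are non-adjacent, the new path need not be shorter: if $z\in K_1$ and its only neighbour in $P^*$ is the neighbour $x^*$ of $x$, the path $z$-$x^*$-$P$-$y$ has the same length as $P$. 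The paper deals with this exact situation not by length but by the closure condition: minimality forces $z$ to be adjacent to $x^*$, whence $x^*\in N(x)\cap N(z)\cap(A\cup C)$ would lie in $\closure(K_1,K_2)=X=C$, contradicting $x^*\in D\subseteq A$. Second, and more seriously, your fallback for the case where the re-routed ends become adjacent (say $z\in K_1$ is adjacent to $y$) is wrong: you claim this forces $P$ to be short, but it does not --- $z$ can be adjacent to $y$ and to an interior vertex of an arbitrarily long $P$, and then no re-routing (to ends $z,y$ or to ends $x,z$) produces non-adjacent ends, so minimality yields nothing, and no diamond or clock is visible inside $G[P\cup X]$ alone.

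What the paper actually does at this point, and what is missing from your plan, is to leave $D\cup X$ and use the side $B$. Using diamond-freeness and no-star-cutset one first shows no vertex of $K_1$ has two neighbours in $K_2$ (and vice versa); then the closure condition $X=\closure(K_1,K_2)$ guarantees one can pick $x'\in K_1$ and $y'\in K_2$ with neighbours in $B$, with $x'$ non-adjacent to $y$ and $y'$ non-adjacent to $x$. A path $Q$ from $x'$ to $y'$ through $B$ then closes $P$ into a hole $H=x'$-$Q$-$y'$-$y$-$P$-$x$-$x'$, and any $z\in X\setminus\{x,y\}$ with a neighbour in $P^*$ --- which, by the minimality-plus-closure step above, must be adjacent to both $x$ and $y$ --- has two non-adjacent neighbours in $H$, a clock. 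Without this detour through $B$ (and without using the closure hypothesis and the ``no vertex of $K_1$ has two neighbours in $K_2$'' claim), the adjacent-ends configuration you flag as a ``base case'' is not dispatchable, so as written your proposal does not yield the lemma.
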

\begin{proof}
    Since $G$ has no star cutset, we find that $K_1 \cap K_2 = \emptyset$. We start with the following: 

    \sta{\label{st:betweenliques}No vertex in $K_1$ has two or more neighbours in $K_2$, and vice versa.}
    Suppose for a contradiction that $v \in K_1$ has at least two neighbours in $K_2$. Then, since $G$ is diamond-free, it follows that $v$ is adjacent to every vertex in $K_2$. But then $X \subseteq N[v]$, and so $G$ has a star cutset, a contradiction. This proves \eqref{st:betweenliques}. 

    \medskip

    Since $G$ has no star cutset, and hence no clique cutset, it follows that $N(D)$ is not a clique. Therefore, there is a path with interior in $D$ and non-adjacent ends in $N(D) \subseteq X$. Let $P$ be a shortest such path, and let $x$ and $y$ be its ends. We may assume, by symmetry, that $x \in K_1$ and $y \in K_2$. 

    \sta{\label{st:marker} Let $v \in X \setminus \{x, y\}$ such that $v$ has a neighbour in $P^*$. Then $v$ is adjacent to both $x$ and $y$.}

Suppose not. We may assume that  $v \in K_1$, and so $v$ is adjacent to $x$ and non-adjacent to $y$. From the choice of $P$, since the path from $v$ to $y$ with interior in $P^*$ is not a better choice for $P$, it follows that $v$ is adjacent to the neighbour $x^*$ of $x$ in $P$. But now $x^* \in \closure(K_1,K_2)$, a contradiction. This proves \eqref{st:marker}. 

    \medskip

    Let us pick $x'$ as follows: If $x$ has a neighbour in $B$, then $x' = x$. Otherwise, at least two vertices in $K_1 \setminus \{x\}$ have a neighbour in $B$. Note that $y$ has at most one neighbour in $K_1$ by \eqref{st:betweenliques}. We choose $x' \in K_1 \setminus N(y)$ such that $x'$ has a neighbour in $B$. 

    Let us now pick $y'$. If $y$ has a neighbour in $B$, then we let $y' = y$. Otherwise, at least two vertices in $K_2 \setminus \{y\}$ have a neighbour in $B$. Since $x$ has at most one neighbour in $K_2$, we pick $y'$ to be a vertex in $K_2 \setminus N(x)$ with a neighbour in $B$. 

    \begin{figure}[ht]
        \centering
        \includegraphics[width=0.4\textwidth]{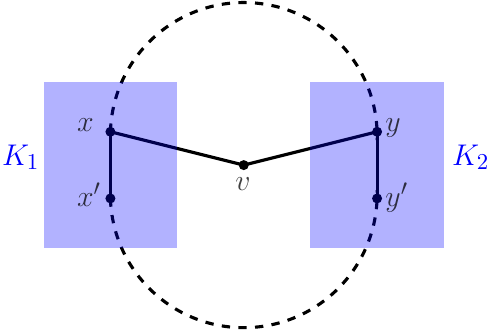}
        \caption{Proof of Lemma \ref{lem:marker}. A clock in the case that some vertex in $X$ is adjacent to both $x$ and $y$. \pictured}
        \label{fig:marker}
    \end{figure}

    From our choices described above, it follows that: 
    \begin{itemize}
        \item the vertices $x, x'$ are in $K_1$ (and possibly equal); 
        \item the vertices $y, y'$ are in $K_2$ (and possibly equal); 
        \item $y$ is anticomplete to $\{x, x'\}$ and $x$ is anticomplete to $\{y, y'\}$; 
        \item if $x \neq x'$, then $x'$ has no neighbour in $P^*$ (by \eqref{st:marker}, since $x'$ is non-adjacent to $y$); and
        \item If $y \neq y'$, then $y'$ has no neighbour in $P^*$ (by \eqref{st:marker}, since $y'$ is non-adjacent to $x$). 
    \end{itemize}
    Now, let $Q$ be a path from $x'$ to $y'$ with interior in $B$; this exists by the choice of $x'$ and $y'$. Then, $H = x'$-$Q$-$y'$-$y$-$P$-$x$-$x'$ is a hole in $G$. If some vertex $v \in (K_1 \cup K_2) \setminus \{x, y\}$ has a neighbour in $P^*$, then by \eqref{st:marker}, $v$ has two non-adjacent neighbours in $H$, namely $x$ and $y$, a contradiction as $G$ is clock-free. It follows that no vertex in $(K_1 \cup K_2) \setminus \{x, y\}$ has a neighbour in $P^*$. Since $P$ is induced, each of $x$ and $y$ has exactly one neighbour in $P^*$. This completes the proof. 
\end{proof}

By Theorem \ref{thm:noncrossing}, the separations $S_{w, G}(X)$ and $S_{w, G}(X')$ are loosely non-crossing for all $X, X' \in \core_{w, G}(\mathcal{X}(G))$. By Lemma \ref{lem:noncross}, we may define, for each component $D$ of $G \setminus \beta$, a set $X(D) 
 \in\core_{w, G}(\mathcal{X}(G))$ such that, writing $(A, C, B) = S_{w, G}(X(D))$, we have that $D \subseteq A$. It follows that $D$ is a component of $A$. 

Given $X \in \core_{w, G}(\mathcal{X}(G))$, we write $D(X)$ for the union of all components $D$ of $G \setminus \beta$ with $X(D) = X$. It follows that 
$$\bigcup_{X \in \core_{w, G}(\mathcal{X}(G))} D(X) = G \setminus \beta,$$
where the union is a disjoint union. 

Now, for every $X \in \core_{w, G}(\mathcal{X}(G))$, let us define $P(X)$ as follows. If $D(X) = \emptyset$, then $P(X) = \emptyset$. Otherwise, let us pick a component $D$ of $D(X)$, and let $P(X)$ be a path with interior in $D$ and ends in $X$ as guaranteed by Lemma \ref{lem:marker}. We call $P(X)$ the \emph{marker path} for $X$. 

We define $$\beta^* = \beta \cup \bigcup_{X \in \core_{w, G}(\mathcal{X}(G))} P(X).$$  
From the choice of paths $P(X)$ as in Lemma \ref{lem:marker}, and since $D$ is a component of $G \setminus \beta$, it follows that every vertex in $\beta^* \setminus \beta$ has degree two in $\beta^*$ and is not contained in a triangle in $\beta^*$. 

In preparation for the next section, let us also define a weight function $w^*$ on $\beta^*$, as follows: 
\begin{itemize}
    \item For every $v \in \beta$, we let $w^*(v) = w(v)$. 
    \item For every $X \in \core_{w, G}(\mathcal{X}(G))$ with $D(X) \neq \emptyset$, we pick a vertex $a_X \in (P(X))^*$ arbitrarily; and we set $w^*(a_X) = w(D(X))$ and $w^*(v) = 0$ for all $v \in (P(X))^* \setminus \{a_X\}$. 
\end{itemize}
Then, for every vertex $v \in \beta$, its weight remains the same; for every component $D$ of $G \setminus \beta$, we move its total weight to $a_X$ where $X = X(D)$. From this, it is easy to see that $w^*$ is a weight function on $\beta^*$. 

The following results help us describe ways in which $\beta^*$ is structurally simpler than $G$. 
\begin{lemma}\label{lem:centralseagull}
    Assuming Assumption \ref{as:centralbag}, and with the definition of $\beta^*$ as above, the following holds. Suppose that $\beta^*$ contains a seagull with $a, u, v$ as in the definition of a seagull. Then one of the following holds: 
    \begin{itemize}
        \item At least one of $a$ or $u$ is in $G \setminus \beta$.
        \item $N_G(u) \setminus \{v\}$ is a clique anticomplete to $\{v\}$.
        \item $N_G(a) \setminus \{v\}$ is a clique anticomplete to $\{v\}$. 
    \end{itemize}
\end{lemma}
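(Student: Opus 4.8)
The plan is to suppose the lemma fails and derive a contradiction by feeding the seagull into Theorem \ref{thm:paw} or Theorem \ref{thm:seagull}. So assume $\beta^*$ contains a seagull on $\{a,u,v\}$ (so $au, av \in E(\beta^*)$ and $uv \notin E(\beta^*)$), that both $a$ and $u$ lie in $\beta$, and that neither $N_G(u)\setminus\{v\}$ nor $N_G(a)\setminus\{v\}$ is a clique anticomplete to $\{v\}$. First I would observe that, since $a, u \in \beta$ and the marker-path interiors $\beta^*\setminus\beta$ consist of degree-two vertices not in triangles, the relevant adjacencies among $a,u,v$ are the same in $G$ as in $\beta^*$ (a vertex of $\beta^*\setminus\beta$ has no edges to anything outside its own marker path, and $a,u$ are not such vertices; one must just check $v$'s status, handling the case $v\in\beta^*\setminus\beta$ separately — there $v$ has degree two with two non-adjacent neighbours, so $N_G(v)$ issues don't even arise in the way the conclusion is phrased, but $a,u\in N(v)$ still). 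The upshot is that $\{a,u,v\}$ is a seagull in $G$ with $a,u\in\beta\subseteq V(G)$.

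Next I would do the case analysis on whether $a$ is a claw center in $G$. \emph{Case 1: $a$ is a claw center.} Then the seagull $\{a,u,v\}$ in $G$ together with the claw-center hypothesis is exactly the input to Theorem \ref{thm:seagull}, which produces a vertex $b\in V(G)$ and a clique $K\subseteq N[b]$ such that $\{v\}\cup K$ separates $\{a\}$ from $\{u\}$ in $G$, with $ab\notin E(G)$. The idea is that this separator should have been ``seen'' by the central bag: $\{v\}\cup K$ is a union of at most two cliques (one being $\{v\}\cup(K\cap N[v])$ if $v\in N[b]$, or just $\{v\}$ and $K$ as two cliques otherwise — here I'd lean on Lemma \ref{lem:diamond} to make the clique decomposition clean), so it lies in $\mathcal{X}(G)$ after taking $\closure$, and the canonical separation it induces puts $a$ and $u$ on opposite sides. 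Since $a,u\in\beta$ and $\beta$ is obtained by deleting $A$-sides of canonical separations of sets in $\core_{w,G}(\mathcal{X}(G))$, I want to argue that $a$ and $u$ cannot both survive in $\beta$ — one of them lies in the $A$-side of (the core element shielding) this separation, contradiction. \emph{Case 2: $a$ is not a claw center in $G$.} Then by Lemma \ref{lem:diamond}, $N_G(a)$ is a disjoint union of cliques pairwise anticomplete to each other, and since it is not a claw center, $N_G(a)$ has at most two components. If $N_G(a)$ is a single clique, then since $v\in N_G(a)$ and $u\in N_G(a)$ would force $uv\in E$, we get $u\notin N_G(a)$, i.e.\ $N_G(a)$ is a clique — but we need it anticomplete to $v$, which fails since $v\in N_G(a)$; so actually the clique must be $N_G(a)\setminus\{v\}$... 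I'd have to be careful here, and the cleaner route is: if $a$ is not a claw center, then either $N_G(a)\setminus\{v\}$ is a clique (anticomplete to $v$ iff $v$'s component of $N_G(a)$ is just $\{v\}$), or $v$ lies in a component of $N_G(a)$ of size $\geq 2$, in which case pick $a'\in N_G(a)\cap N_G(v)$ and apply Theorem \ref{thm:paw} to the paw $\{a,a',u,v\}$ (valid since $G$ is diamond-free and $uv\notin E$), getting a vertex $b\in G\setminus N(a)$ and clique $K\subseteq N[b]$ with $\{v\}\cup K$ separating $\{u\}$ from $\{a,a'\}$; then run the same ``central bag should have seen it'' argument as in Case 1 to separate $a$ from $u$.

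The main obstacle I anticipate is the final ``central bag should have seen it'' step in both cases: showing rigorously that a separator $\{v\}\cup K$ of $G$ (coming out of Theorem \ref{thm:paw} or \ref{thm:seagull}) corresponds, after applying $\closure$ and passing to $\core_{w,G}(\mathcal{X}(G))$, to a canonical separation whose $A$-side contains at least one of $a,u$. One has to verify $\{v\}\cup K$ (or its closure) is genuinely in $\mathcal{X}(G)$ — i.e.\ it is a union of at most two cliques and is not a \whalf-balanced separator (the latter should follow because $G$ has no \whalf-balanced separator of size $\leq 4\omega(G)$ and $|\{v\}\cup K|\leq 2\omega(G)$) — then track where $a$ and $u$ land relative to the large component $B$, and finally use that shields only shrink $B$ and enlarge $B\cup C$, so a core element shielding our separator still has $a$ or $u$ on its $A$-side. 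This bookkeeping with $\closure$, $\core$, and the definition of $\beta$ is exactly the kind of thing handled analogously elsewhere in the paper, so I expect it to go through, but it is the technical heart of the argument; the rest (the seagull/paw case split, invoking the diamond-free structure of neighbourhoods, and transferring adjacencies between $\beta^*$ and $G$) is routine.
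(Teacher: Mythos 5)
Your overall skeleton matches the paper's: first use diamond-freeness to split on whether $a$ is a claw center, obtain a cutset of the form $\{v\}\cup K$ separating $a$ from $u$ via Theorem \ref{thm:seagull} (claw-center case) or Theorem \ref{thm:paw} (paw case), and then argue that the central-bag construction must have ``seen'' this cutset. But the final step -- which you yourself flag as the technical heart and leave as an expectation -- is sketched with the wrong mechanism, and as stated it would fail. You propose to derive a contradiction because ``one of $a,u$ lies in the $A$-side of (the core element shielding) this separation.'' That is exactly what cannot happen: setting $X=\closure(K,\{v\})\in\mathcal{X}(G)$ and letting $X'\in\core_{w,G}(\mathcal{X}(G))$ be $X$ or a shield for it, the shield relation gives $A\subseteq A'$ and $\beta\cap A'=\emptyset$, so $a,u\in\beta$ forces $a,u\notin A$. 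The real content is the remaining case: since $B$ is a single component of $G\setminus(\{v\}\cup K)$ and the cutset separates $a$ from $u$, they cannot both lie in $B$, so one of them lies in $C=X=\closure(K,\{v\})$ itself. This is precisely where the closure operation bites: $u$ (say) is in the closure but not in $K\cup\{v\}$, hence has two neighbours in $K$, hence by diamond-freeness is complete to $K$, and then the no-star-cutset hypothesis of Assumption \ref{as:centralbag} pins down $N_G(u)$ to $K\cup\{v\}$ -- which is not a contradiction with $\beta$ at all, but rather yields one of the lemma's near-simplicial outcomes (contradicting your standing assumption that those outcomes fail). Your sketch contains no trace of this absorption-into-the-closure case, and without it the argument does not close.

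A secondary omission: to conclude the outcomes as stated (``a clique \emph{anticomplete to} $\{v\}$''), the paper first proves a reduction (its step \eqref{st:wincondition}): if $N_G(u)\setminus\{v\}$ is a clique that is \emph{not} anticomplete to $v$, then $N_G(u)$ is a single clique, hence a clique (star) cutset, contradicting Assumption \ref{as:centralbag}. You assume only that neither $N_G(u)\setminus\{v\}$ nor $N_G(a)\setminus\{v\}$ is a clique anticomplete to $\{v\}$, so when your argument (corrected as above) produces ``$N_G(u)\setminus\{v\}$ is a clique,'' you still need this extra step to reach a contradiction; it also quietly resolves the loose end in your Case 2 where $v$'s component of $N_G(a)$ has size one but the other clique might meet $N(v)$. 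These are fixable, but together with the misidentified endgame they constitute a genuine gap rather than routine bookkeeping.
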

\begin{proof}

First we show

\sta{\label{st:wincondition}If $N_G(u) \setminus \{v\}$ is a clique, then the theorem holds.}

Suppose  $N_G(u) \setminus \{v\}$ is a clique $K$ and $v$ has a neighbour in
$K$. Then $N_G(u)$ is connected, and therefore a clique by Lemma \ref{lem:diamond}. 
Since $G$ contains a seagull, $G$ is not a complete graph. But now
$N_G(u)$ is a clique cutset in $G$, contradicting Assumption \ref{as:centralbag}. This proves \eqref{st:wincondition}. 

\medskip

By \eqref{st:wincondition}, we may assume that  $N_G(u) \setminus \{v\}$ is not a clique,
and similarly $N_G(a) \setminus \{v\}$ is not a clique. We may assume that the first outcome does not hold, and it follows that $a$ and $u$ are in $\beta$. 

    \sta{\label{st:72}There is a cutset in $G$ of the form $\{v\} \cup K$ where $K$ is a clique and which separates $\{a\}$ from $\{u\}$.}
    We consider two cases. Suppose first that $a$ is a claw center in $G$. Then, by Theorem \ref{thm:seagull}, there is a vertex $b$ non-adjacent to $a$ and a clique $K \subseteq N[b]$ in $G$ such that $\{v\} \cup K$ separates $\{a\}$ from $\{u\}$, and \eqref{st:72} holds. Therefore, we may assume that $a$ is not a claw center, and therefore (using Lemma \ref{lem:diamond}), we have that $N_G(a) = K_1 \cup K_2$, where $K_1$ and $K_2$ are cliques. We may assume that $v \in K_1$. If $K_1 = \{v\}$, then the third outcome holds. Therefore, we may assume that $K_1$ contains a vertex $a' \neq v$. Now, by Theorem \ref{thm:paw} applied to the paw induced by $\{u, v, a, a'\}$, we once again obtain the desired cutset. This proves \eqref{st:72}. 
    
    \medskip
    It follows that $X = \closure(K, \{v\}) \in \mathcal{X}(G)$, and so $\core_{w, G}(\mathcal{X}(G))$ contains either $X' = X$ or a $(w, G)$-shield $X'$ for $X$. Then, writing $(A, C, B) = S_{w, G}(X)$ and $(A', C', B') = S_{w, G}(X')$, it follows that $A \subseteq A'$ and $\beta \cap A = \emptyset$. 

    It follows that $a,u \not \in A$, and therefore $a \in X$ or $u \in X$. There is symmetry, and so we may assume that $u \in X$. Since $u \not \in K$, it follows that $u$ has at least two neighbours in $K$; consequently $u$ is adjacent to all vertices of $K$. Let $Q$ be the component of $G \setminus (\{v\} \cup K)$ containing $u$. Since $K \cup \{v\} \subseteq N[u]$, and since $G$ has no star cutset by Assumption \ref{as:centralbag}, it follows that $Q = \{u\}$, and so $N(u) = K \cup \{v\}$, and the third outcome holds.
\end{proof}

Let us say that a vertex $u$ in a graph $G$ is \emph{near-simplicial} in $G$ if there is a vertex $v \in V(G)$ such that $N_G(u) \setminus \{v\}$ is a clique. Then, the second and third outcomes of Lemma \ref{lem:centralseagull} imply that $u$ and $a$, respectively, are near-simplicial. 

\begin{lemma} \label{lem:centraldegree}
    Assuming Assumption \ref{as:centralbag}, and with the definition of $\beta^*$ as above, the following hold:

    \begin{enumerate}[(i)]
        \item For every $x \in \beta^*$, there is a set $Z_1 \subseteq N(x)$ such that $Z_1 = K \cup \{x'\}$ where $K$ is a clique, with the property that for every $v \in N_{\beta^*}(x) \setminus Z_1$, we have $\deg_{\beta^*}(v) \leq 2$. 
        \item For every $x$ in $\beta^*$ which is a claw center in $\beta^*$, there is a set $Z_2 \subseteq N(x)$ such that $Z_2 = K \cup \{x', x''\}$ where $K$ is a clique, with the property that for every neighbour $y$ of $x$ in $\beta^* \setminus Z_2$, there is no $X \in \core_{w, G}(\mathcal{X}(G))$ with $X = K_1 \cup K_2 = \closure(K_1, K_2)$ such that $y$ has a neighbour in a component of $D(X)$ and $x, y \in K_1$. 
    \end{enumerate}
    
\end{lemma}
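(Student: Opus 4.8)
The plan is to reduce both statements to the case $x\in\beta$. Any vertex of $\beta^*\setminus\beta$ has degree $2$ in $\beta^*$ and so is never a claw centre in $\beta^*$, which makes part (ii) vacuous for it; and if $x\in\beta^*\setminus\beta$ then $N_{\beta^*}(x)$ is a pair of non-adjacent vertices, so we may take $Z_1=N_{\beta^*}(x)$, written as a one-vertex clique together with $\{x'\}$.

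For part (i) with $x\in\beta$, put $S=\{v\in N_{\beta^*}(x):\deg_{\beta^*}(v)\ge 3\}$, and note $S\subseteq\beta$. If $S$ is a clique we take $Z_1=S$. Otherwise pick non-adjacent $v_1,v_2\in S$; since $\beta^*=G[\beta^*]$, the set $\{x,v_1,v_2\}$ induces a seagull in $G$ with $a=x$, so Lemma~\ref{lem:centralseagull} applies. Its first outcome fails because $x,v_1,v_2\in\beta$. If its second outcome held, then for some labelling of $\{v_1,v_2\}$ as $\{u,v\}$ the set $N_G(u)\setminus\{v\}$ would be a clique; as $u\not\sim v$ this set equals $N_G(u)$, so $u$ would be simplicial in $G$. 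But $u$ has a neighbour (namely $x$) and is not adjacent to all of $V(G)$ --- otherwise $V(G)=N_G[u]$ would be a clique (as $u$ is simplicial), so $G$ would be complete, contradicting that it contains a seagull --- and hence $N_G(u)$ would be a proper non-empty clique whose deletion isolates $u$ from the rest of $G$, a clique cutset and thus a star cutset of $G$, a contradiction. Therefore the third outcome holds: $N_G(x)\setminus\{v_2\}$ is a clique, so $S\setminus\{v_2\}$ is a clique, and $Z_1=S=(S\setminus\{v_2\})\cup\{v_2\}$ has the required form; the degree condition holds by the definition of $S$.

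For part (ii), let $x\in\beta$ be a claw centre in $\beta^*$, hence in $G$. Call a neighbour $y$ of $x$ in $\beta^*$ \emph{dangerous} if there is a witness $X\in\core_{w,G}(\mathcal{X}(G))$ with $X=K_1\cup K_2=\closure(K_1,K_2)$, $x,y\in K_1$, and $y$ having a neighbour in a component of $D(X)$; let $T$ be the set of dangerous vertices. Two dangerous vertices with a common witness both lie in $K_1$ and are therefore adjacent, so $T$ is a union of cliques through $x$; we must show that after deleting at most two vertices $T$ becomes a clique. Consider two non-adjacent dangerous vertices $y_1,y_2$; they necessarily have \emph{distinct} witnesses $X_1,X_2$, and $y_i\notin K_1^{(X_{3-i})}$, since in each case a common containment would force $y_1\sim y_2$. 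As $\{x,y_1,y_2\}$ induces a seagull with $a=x$ a claw centre, Theorem~\ref{thm:seagull} yields a vertex $b\notin N_G(x)$ and a clique $K\subseteq N_G[b]$ with $\{y_2\}\cup K$ separating $\{x\}$ from $\{y_1\}$ (and symmetrically with $y_1,y_2$ swapped). Set $X^*=\closure(\{y_2\},K)\in\mathcal{X}(G)$ and $(A^*,C^*,B^*)=S_{w,G}(X^*)$; then $C^*$ still separates $x$ from $y_1$, and $B^*$ is the heavy component.

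The heart of the matter --- and the main obstacle --- is then to compare $X^*$, or the member of $\core_{w,G}(\mathcal{X}(G))$ that shields it, with the witness $X_1$, using the $(w,G)$-shield order together with the loose-non-crossing conclusion of Theorem~\ref{thm:noncrossing}. Tracking which side of each of $S_{w,G}(X_1)$ and $S_{w,G}(X^*)$ contains $x$, $y_1$, the component $D_1\subseteq A^{(X_1)}$ reached by $y_1$, and the heavy component $B^*$, one argues that the heavy sides must nest so that either $X^*$ (or its shield) is a $(w,G)$-shield for $X_1$, contradicting $X_1\in\core_{w,G}(\mathcal{X}(G))$, or else a path through $D_1$ joining $x$ to $y_1$ around $C^*$ produces a clock in $G$ --- and here it is essential both that $x$ is a claw centre and that $b\notin N_G(x)$. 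This contradiction applies whenever neither $y_1$ nor $y_2$ is one of the (at most two) dangerous vertices singled out by these applications of Theorem~\ref{thm:seagull}; taking $\{x',x''\}$ to be that exceptional set, we conclude that $T\setminus\{x',x''\}$ contains no non-adjacent pair and is therefore a clique $K$, so $Z_2=K\cup\{x',x''\}$ is as required. The work remaining beyond this nesting analysis is routine but delicate: disposing of the degenerate cases in which some of $x,y_1,y_2,b$ or the ends of the marker paths coincide.
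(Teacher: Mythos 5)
Your proposal has genuine gaps, and the most serious one comes from how you read the seagull results. The paper's definition of a seagull garbles the edge set, but the configuration that Lemma~\ref{lem:seagull}, Theorem~\ref{thm:seagull} and Lemma~\ref{lem:centralseagull} actually treat (and prove) is the one in which $v$ is the common neighbour of the non-adjacent pair $a,u$: if $a$ were adjacent to $u$, the conclusion of Theorem~\ref{thm:seagull}, that a set disjoint from $\{a,u\}$ separates $\{a\}$ from $\{u\}$, could never hold, and the proofs use $uv\in E(G)$ throughout; moreover, in the paper's own proof of the present lemma, Lemma~\ref{lem:centralseagull} is applied to the seagull $\{a,x,u\}$ with $x$ the common neighbour and its outcomes are read as ``$N_G(a)\setminus\{x\}$ is a clique anticomplete to $\{x\}$.'' You instead put $x$, the common neighbour, in the role of $a$. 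Under the correct reading, Lemma~\ref{lem:centralseagull} applied to $\{x,v_1,v_2\}$ says nothing about $N_G(x)$: its non-trivial outcomes are that $N_G(v_1)\setminus\{x\}$ or $N_G(v_2)\setminus\{x\}$ is a clique anticomplete to $\{x\}$. So your key step in (i), ``the third outcome holds: $N_G(x)\setminus\{v_2\}$ is a clique,'' is not available (and your refutation of the second outcome via $N_G(u)\setminus\{v\}=N_G(u)$ also rests on the wrong labelling). What one actually learns is only that one of the two non-adjacent high-degree neighbours is near-simplicial with respect to $x$, which is why the paper needs a second seagull application (to $\{x,a,y\}$, using a triangle $\{a,y,y'\}$ inside that neighbour's neighbourhood, after first disposing of the case that $x$ itself is near-simplicial) before it can bound by one the number of high-degree neighbours outside the unique large clique component of $N_{\beta^*}(x)$. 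Likewise in (ii), Theorem~\ref{thm:seagull} needs the claw centre to be one of the two non-adjacent vertices; applied to $\{x,y_1,y_2\}$ it yields a cutset of the form $\{x\}\cup K$ separating $y_1$ from $y_2$ with $b$ non-adjacent to the claw-centre wing, not $\{y_2\}\cup K$ separating $x$ from $y_1$ with $b\notin N_G(x)$, so the set $X^*$ you build is not what the theorem provides.

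Independently of the labelling, the core of your part (ii) is missing. The decisive comparison of $X^*$ (or its shield) with the witness $X_1$ is only announced (``one argues that the heavy sides must nest\ldots''), and the plan has a structural flaw: the exceptional vertices you propose to collect into $\{x',x''\}$ come from an application of Theorem~\ref{thm:seagull} to one particular non-adjacent pair $y_1,y_2$, hence depend on that pair, and nothing in the sketch shows that a single two-element set absorbs all non-adjacencies among dangerous vertices, which is exactly what $Z_2=K\cup\{x',x''\}$ requires. The paper's proof of (ii) is organised differently precisely to get this uniformity: using Lemma~\ref{lem:centralseagull} it shows that all but at most one vertex of $N_{\beta}(x)\setminus Z_1$ are simplicial in $G\setminus\{x\}$ (this defines $x''$); then, for a putative bad $y\in\beta$, it deduces $N_G(y)=\{x\}\cup T$ with $T$ a clique anticomplete to $\{x\}$, hence $K_1=\{x,y\}$, and either contradicts $y\in N(B)$ or exhibits $\closure(\{x\},K_2)$ as a $(w,G)$-shield for $X$, contrary to $X\in\core_{w,G}(\mathcal{X}(G))$; finally it handles $y\in\beta^*\setminus\beta$ (a marker-path interior vertex) separately, a case your sketch never addresses. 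As written, then, part (i) rests on a misapplication of Lemma~\ref{lem:centralseagull} and part (ii) defers its main argument, so the proof does not go through.
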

\begin{proof}
    By our choice of $P(X)$ and by Lemma \ref{lem:marker}, it follows that all vertices in $\beta^* \setminus \beta$ have degree two in $\beta^*$ and are not contained in triangles in $\beta^*$. 
    
    Let $x \in \beta^*$. If $x \in \beta^* \setminus \beta$, then $\deg_{\beta^*}(x) = 2$, and both statements hold. So we may assume that $x \in \beta$. Moreover, if $x$ is near-simplicial in $\beta^*$, then again both (i) and (ii) hold; so we may assume that this is not the case. 
    
    \sta{\label{st:bowtie}The set $N_{\beta^*}(x)$, which is a disjoint union of cliques by Lemma \ref{lem:diamond}, contains at most one component of size more than one.}

    Suppose not; let $a, u \in N_{\beta^*}(x)$ be non-adjacent such that both $a$ and $u$ are in a clique of size at least two in $N_{\beta^*}(x)$; say $a'$ is a common neighbour of $x$ and $a$ in $\beta^*$. We apply Lemma \ref{lem:centralseagull} to the seagull with vertex set $\{a, x, u\}$. Then, since $a, u$ are each in a triangle in $\beta^*$ (with $x$ and a neighbour of $x$), it follows that $a, u \in \beta$, so the second or third outcome of Lemma \ref{lem:centralseagull} holds. By symmetry, we may assume that $N_G(a) \setminus \{x\}$ is a clique $K$ containing $a'$ and anticomplete to $\{x\}$. But $a'$ is adjacent to $x$, a contradiction; this proves \eqref{st:bowtie}. 

    \medskip
    Let $K$ be defined as follows. If $N_{\beta^*}(x)$ has no component of size more than one, then $K = \emptyset$. Otherwise, by \eqref{st:bowtie}, we define $K$ to be the unique component of $N_{\beta^*}(x)$ of size more than one; by Lemma \ref{lem:diamond}, we have that $K$ is a clique. 

    \sta{\label{st:twonbrsdeg}There is at most one vertex in $N_{\beta^*}(x) \setminus K$ with degree more than two in $\beta^*$.}
    Suppose that $a, u \in N_{\beta^*}(x) \setminus K$ both have degree more than two in $\beta^*$. Every neighbour of $x$ which is in $\beta^* \setminus \beta$ has degree two in $\beta^*$, and hence $a, u \in \beta$.  Applying Lemma \ref{lem:centralseagull} to the seagull with vertex set $\{a, x, u\}$, we conclude that the second or third outcome holds. By symmetry, we may assume that there is a clique $X$ such that $N_G(a) = X \cup \{v\}$ with $X$ anticomplete to $\{v\}$. Since $a$ has degree at least three in $\beta^*$, it follows that there are two distinct vertices $y, y' \in X \cap \beta^*$. Since both are in the triangle $\{a, y, y'\}$, it follows that $y, y' \in \beta$. But now, by applying Lemma \ref{lem:centralseagull} to the seagull with vertex set $\{x, a, y\}$, we conclude that one of the following holds: 
    \begin{itemize}
        \item The vertex $x$ is near-simplicial in $G$ (contrary to our assumption that $x$ is not near-simplicial in $\beta^*$); or
        \item $N_G(y) \setminus \{a\}$ is a clique anticomplete to $\{a\}$ (contrary to the fact that $y'$ is in $N_G(y) \setminus \{a\}$ and $y'a \in E(G)$). 
    \end{itemize}
    This is a contradiction, and proves \eqref{st:twonbrsdeg}. 

    \medskip 

    By \eqref{st:twonbrsdeg}, there is at most one vertex in $N_{\beta^*}(x) \setminus K$ of degree more than two in $\beta^*$; let us choose $x'$ to be this vertex if it exists (letting $x'$ be an arbitrary vertex in $N_{\beta^*}(x)$ otherwise). Then $Z_1 = K \cup \{x'\}$ satisfies (i). 

    It remains to prove (ii). Let $K, Z_1, x'$ be as above. Let $a, u \in N_{\beta^*}(x) \setminus Z_1$ with $a, u \in \beta$. Then, by Lemma \ref{lem:centralseagull}, at least one of $a, u$ is near-simplicial in $G$ and simplicial in $G \setminus \{x\}$. We define a vertex $x''$ as follows: If every vertex in $N_{\beta}(x) \setminus Z_1$ is simplicial in $G \setminus \{x\}$, then $x'' = x'$. Otherwise, $x''$ is the unique vertex in $N_{\beta}(x)$ which is not simplicial in $G \setminus \{x\}$. Now let $Z_2 = Z_1 \cup \{x''\} = K \cup \{x', x''\}$.

    We claim that $Z_2$ satisfies (ii). Suppose for a contradiction that (ii) does not hold, that is, there is a vertex $y \in N_{\beta^*}(x) \setminus Z_2$ and a set $X \in \core_{w, G}(\mathcal{X}(G))$ with $X = K_1 \cup K_2 = \closure(K_1, K_2)$ such that $y$ has a neighbour in a component of $D(X)$ and $x, y \in K_1$. 

    Suppose first that $y \in \beta$. From the choice of $Z_2$, it follows that $N_G(y) = \{x\} \cup T$, where $T$ is a clique anticomplete to $\{x\}$. It follows that $K_1 = \{x, y\}$. We note that $|T \cap K_2| \leq 1$, as otherwise $y$ is adjacent to all of $K_2$ (since $G$ is diamond-free), but then $X \subseteq N[y]$ is a star cutset, contradicting Assumption \ref{as:centralbag}. 
    
    Letting $(A, C, B) = S_{w, G}(X)$, and using that $T$ is a clique, we conclude that one of the following holds:
    \begin{itemize}
        \item $T \subseteq C \cup A$. Then, $y$ is not in $N(B)$, and $y$ does not have two neighbours in either $K_1$ or $K_2$, contrary to the fact that $X = K_1 \cup K_2 = \closure(K_1, K_2)$, a contradiction. 
        \item $T \subseteq C \cup B$ and $T \cap B \neq \emptyset$. Then, $X \setminus \{y\}$ is a cutset separating the connected component $B \cup \{y\}$ from $A$. Moreover, $X \setminus \{y\}$ is the union of the two cliques $K_2$ and  $K_1 \setminus \{y\} = \{x\}$, and $X' = \closure(\{x\}, K_2) \subseteq X \setminus \{y\}$; therefore, $X' \in \mathcal{X}(G)$ is a $(w, G)$-shield for $X$, a contradiction. 
    \end{itemize}

    It follows that $y \in \beta^* \setminus \beta$. Then, $y \in P(X')$ for some $X' \in \core_{w, G}(\mathcal{X}(G))$; and in particular, there is a component $D'$ in $D(X')$ such that $y \in D'$ and $D'$ is a component of $G \setminus \beta$. Since $y$ has a neighbour in a component $D$ of $D(X)$, and $D$ is a component of $G \setminus \beta$, it follows that $D = D'$. But then, once again letting $(A, C, B) = S_{w, G}(X)$, we conclude that $y \in A$, a contradiction as we had assumed that $y \in K_1 \subseteq C$. This proves (ii). 
\end{proof}

\section{Putting everything together}

We require the following result and definition of \cite{abrishami2022induced}. For a graph $G$ and positive integer $d$, we denote by $\gamma_d(G)$ the maximum
degree of the subgraph of $G$ induced by the set of vertices with degree at least $d$ in $G$.

\begin{theorem}[\previousauthors]
  \label{smalldensecomps}
  For all $t, \gamma > 0$, there exists $q = q(t, \gamma)$ such that every
graph $G$ with $\gamma_3(G) \leq \gamma$
and treewidth more than $q$ contains a subdivision of $W_{t\times t}$
or the line graph of a subdivision of $W_{t\times t}$ as an induced subgraph.
\end{theorem}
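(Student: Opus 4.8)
The plan is to reduce to the case of bounded maximum degree and then run a ``cleaning'' argument on a large wall subdivision. First I would invoke the Grid (Wall) Minor Theorem of Robertson and Seymour~\cite{robertson1986graph5}: fix $n = n(t, \gamma)$ to be chosen later, and let $q = q(t,\gamma)$ be large enough that every graph of treewidth more than $q$ has a $W_{n \times n}$-minor. Since the wall $W_{n \times n}$ is subcubic, a $W_{n\times n}$-minor in $G$ can be turned into a $W_{n\times n}$-\emph{subdivision} $W^{*}$ as a (not necessarily induced) subgraph of $G$: replace each branch set of the minor model by a minimal connected subgraph through its (at most three) attachment vertices --- necessarily a path or a subdivided claw --- and add the crossing edges; the resulting union is homeomorphic to $W_{n \times n}$.

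The observation that makes the $\gamma_3$-hypothesis behave like a bounded-degree hypothesis is that $G[V(W^{*})]$ has maximum degree at most $\gamma + 3$. Indeed, let $S = \{v \in V(G) : \deg_G(v) \geq 3\}$, so that $\gamma_3(G) \le \gamma$ says $G[S]$ has maximum degree at most $\gamma$. Every vertex of $W^{*}$ has degree at least two in $W^{*}$, so a vertex of $V(W^{*}) \setminus S$ has degree exactly two in $G$ and its two neighbours are its $W^{*}$-neighbours. Hence, for a vertex $w \in V(W^{*}) \cap S$, its neighbours in $G[V(W^{*})]$ lying in $S$ number at most $\gamma$, while its neighbours in $V(W^{*}) \setminus S$ are among the at most three $W^{*}$-neighbours of $w$; and a vertex of $V(W^{*}) \setminus S$ has degree at most two in $G[V(W^{*})]$. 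On the other hand $W^{*}$ is a spanning subgraph of $G[V(W^{*})]$, so $\tw(G[V(W^{*})]) \ge \tw(W^{*}) = \Theta(n)$. Thus it suffices to prove the theorem under the extra assumption that $G$ has bounded maximum degree, applying it to $G[V(W^{*})]$ with $\Delta := \gamma + 3$ and with $n$ chosen so that $\Theta(n)$ exceeds the resulting treewidth bound; note that any induced subdivision of $W_{t\times t}$ or line graph thereof found inside $G[V(W^{*})]$ is also an induced subgraph of $G$.

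For the bounded-degree case I would again start from a wall subdivision $W^{*}$ of a huge wall obtained as above, and clean it. Using that $G$ has maximum degree at most $\Delta$, every vertex of $G$ has at most $\Delta$ neighbours on $W^{*}$, so each ``brick'' of $W^{*}$ meets the neighbourhoods of only boundedly many vertices and only boundedly many chords. One then passes to successively smaller sub-walls --- there are $\Omega((n/m)^2)$ pairwise disjoint $m \times m$ sub-walls, and deleting a few bricks still leaves a large sub-wall --- recording for each sub-wall a bounded-size ``attachment pattern'', and applies a Ramsey/pigeonhole argument to obtain a large sub-wall on which every attachment pattern is the same. If that common pattern is empty, cleaning the remaining local defects by rerouting through spare rows and columns yields an induced subdivision of $W_{t \times t}$; otherwise the repeated pattern is forced to be the standard local picture of the line graph of a wall subdivision, and assembling these pieces exhibits the line graph of a subdivision of $W_{t \times t}$ as an induced subgraph. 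I expect this last dichotomy --- enumerating the attachment patterns compatible with the degree bound, checking that each globally consistent non-trivial one is exactly the line-graph obstruction, and controlling the many degree-$\le 2$ ``shortcut'' vertices that the $\gamma_3$ bound does not see directly --- to be the main obstacle; everything before it is essentially bookkeeping on top of the Grid Minor Theorem.
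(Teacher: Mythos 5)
First, a point of comparison: this paper does not prove Theorem~\ref{smalldensecomps} at all --- it is quoted from \cite{abrishami2022induced} --- so your attempt can only be judged on its own terms and against how that result is obtained there. The first half of your argument is correct: extract a subdivision $W^{*}$ of a huge wall via the grid-minor theorem, let $S$ be the set of vertices of degree at least $3$ in $G$, and observe that every vertex of $V(W^{*})\setminus S$ has degree exactly two in $G$ with both neighbours on $W^{*}$, so that a vertex of $V(W^{*})\cap S$ has at most $\gamma$ neighbours of $G[V(W^{*})]$ in $S$ and at most three in $V(W^{*})\setminus S$; hence $G[V(W^{*})]$ has maximum degree at most $\gamma+3$ while its treewidth is still $\Omega(n)$. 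This is a clean and valid reduction of the $\gamma_3$-hypothesis to the bounded-degree case, and it is essentially the reduction one expects behind the cited statement.

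The genuine gap is the bounded-degree case itself, which you treat as a cleaning exercise. What you need there is precisely the theorem that for all $t,\Delta$, every graph of maximum degree at most $\Delta$ and sufficiently large treewidth contains an induced subdivision of $W_{t\times t}$ or the line graph of such a subdivision; this is a substantial standalone result (Korhonen's grid-induced-minor theorem for graphs of small degree, together with its induced-subgraph corollary; earlier special cases occupied an entire paper of this series), not ``bookkeeping on top of the Grid Minor Theorem.'' Your Ramsey-on-attachment-patterns sketch does not establish it: the asserted dichotomy ``common pattern empty $\Rightarrow$ induced wall after rerouting; common pattern nonempty $\Rightarrow$ the line-graph picture'' is exactly the hard content and is not true as stated --- a nonempty pattern need not resemble a line graph locally (isolated chords, outside vertices with two or three attachments inside one brick, single vertices adjacent to many pairwise far-apart sub-walls, and the degree-two subdivision vertices that the $\gamma_3$ bound does not see all have to be analysed), and ``assembling these pieces'' into a global line graph of a wall subdivision is essentially a re-proof of the known theorem. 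The fix is either to cite that bounded-degree theorem explicitly, making your argument complete (and matching how Theorem~\ref{smalldensecomps} is derived in \cite{abrishami2022induced}), or to accept that the second half requires a genuinely different and much more careful argument than the sketch given.
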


Now we can prove: 
\begin{lemma} \label{lem:sepbeta}
    For every $t \in \mathbb{N}$, there exists a constant $n = n(t)$ such that the following holds. 
    
    Let $G$ be a $t$-clean graph and assume that Assumption \ref{as:centralbag} holds. With the definition of $\beta^*$ and $w^*$ as in the previous section, we have that $\beta^*$ has a $\left(w^*, \frac{1}{2}\right)$-balanced separator of size at most $n$. 
\end{lemma}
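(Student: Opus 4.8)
The plan is to apply Theorem~\ref{smalldensecomps} to $\beta^*$, so it suffices to bound $\gamma_3(\beta^*)$ by a constant and to show that $\beta^*$ is $t'$-clean for some $t' = t'(t)$; then $\tw(\beta^*) \le q(t', \gamma_3(\beta^*))$, and by Lemma~\ref{lemma:tw-to-weighted-separator} the graph $\beta^*$ has a $(w^*, \frac{1}{2})$-balanced separator of size at most $q+1$. (Note the balanced separator guaranteed by Lemma~\ref{lemma:tw-to-weighted-separator} works for the weight function $w^*$, so the weight function causes no trouble.) So the real content is the two structural claims about $\beta^*$.

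First I would bound $\gamma_3(\beta^*)$. The vertices of $\beta^* \setminus \beta$ have degree $2$ in $\beta^*$, so they contribute nothing. For $x \in \beta$, Lemma~\ref{lem:centraldegree}(i) gives a set $Z_1 = K \cup \{x'\} \subseteq N_{\beta^*}(x)$ with $K$ a clique, such that every neighbour of $x$ outside $Z_1$ has degree at most $2$ in $\beta^*$; hence every neighbour of $x$ of degree at least $3$ lies in $Z_1$, so $x$ has at most $|K| + 1 \le \omega(\beta^*) + 1 \le \omega(G) + 1$ neighbours of degree at least $3$. Now I need $\omega(G)$ bounded in terms of $t$: since $G$ is $t$-clean it contains no $K_{t+1}$, so $\omega(G) \le t$. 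Thus $\gamma_3(\beta^*) \le t + 1$, a constant depending only on $t$.

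Second, I would show $\beta^*$ is $t'$-clean for a suitable $t'$. Since $\beta^*$ is obtained from the induced subgraph $\beta = \beta(G,w,\core_{w,G}(\mathcal{X}(G)))$ of $G$ by adding marker paths $P(X)$ whose interiors are induced paths with all interior vertices of degree $2$ and in no triangle, $\beta$ itself is an induced subgraph of $G$ and hence is $t$-clean. The marker paths only subdivide; I would argue that adding such pendant-free subdivided marker paths cannot create a new $t'$-basic obstruction for $t' = t'(t)$ large enough: any subdivision of $W_{t' \times t'}$, line graph of such a subdivision, $K_{t'+1}$, or $K_{t',t'}$ appearing in $\beta^*$ and using marker-path vertices would, after contracting the degree-$2$ marker-path vertices back, yield a $t$-basic obstruction already present in $\beta$ (using that the marker vertices lie in no triangle, so they cannot be branch vertices of a wall-line-graph or participate in a clique or complete bipartite subgraph of size $\ge 3$ on one side). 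A cleaner route, and the one I would take, is: each marker path $P(X)$ has both ends in $X$ and interior in a single component $D$ of $G \setminus \beta$; so $\beta^* \subseteq \beta \cup P(X_1) \cup \cdots$ is in fact an induced subgraph of the graph obtained from $G$ by deleting, for each relevant $X$, all of $D(X)$ except the interior of one chosen path through one chosen component — but that graph is itself an induced subgraph of $G$, hence $t$-clean, so $\beta^*$ is $t$-clean, i.e.\ $t' = t$ works.

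The main obstacle I expect is the second claim — verifying carefully that $\beta^*$ really is an induced subgraph of $G$ (equivalently, that no edges are added between $\beta$ and the marker paths or between different marker paths beyond those already in $G$), which hinges on the marker paths $P(X)$ having interior contained in a \emph{single component} $D$ of $G\setminus\beta$ and ends in $X \subseteq \beta$, so that $\beta \cup P(X)$ induces exactly what one expects in $G$; the loosely-non-crossing property (Theorem~\ref{thm:noncrossing}) and Lemma~\ref{lem:noncross}, together with the construction of $\beta^*$ in the previous section, are what make this work, and I would spell out that the union over all $X \in \core_{w,G}(\mathcal{X}(G))$ of the $P(X)$ together with $\beta$ induces in $G$ precisely $\beta^*$. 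Once that is settled, $\beta^*$ is a $t$-clean graph with $\gamma_3(\beta^*) \le t+1$, Theorem~\ref{smalldensecomps} gives $\tw(\beta^*) \le q(t, t+1) =: q$, and Lemma~\ref{lemma:tw-to-weighted-separator} applied with $c = \frac12$ yields the desired balanced separator of size at most $n(t) := q(t,t+1) + 1$.
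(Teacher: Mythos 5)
Your proposal is correct and follows essentially the same route as the paper: bound $\gamma_3(\beta^*)$ via Lemma~\ref{lem:centraldegree}(i) together with $\omega(G)\leq t$, note that $\beta^*$ inherits $t$-cleanness, apply Theorem~\ref{smalldensecomps} to bound $\tw(\beta^*)$, and then invoke Lemma~\ref{lemma:tw-to-weighted-separator}. The ``main obstacle'' you anticipate is a non-issue: $\beta^*$ is by definition a subset of $V(G)$ and (by the paper's convention of identifying graphs with their vertex sets) denotes the induced subgraph $G[\beta^*]$, so no contraction argument or verification that edges are not added is needed, and $t'=t$ works immediately, exactly as in the paper.
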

\begin{proof}
   Let $q$ be as in Theorem \ref{smalldensecomps} applied with $t$ and with $\gamma = t$; let $n = q+1$. Then, by Lemma \ref{lem:centraldegree}(i), every vertex in $\beta^*$ has at most $\omega(G) \leq t$ (as $G$ is $t$-clean) neighbours of degree more than two, it follows that $\gamma_3(\beta^*) \leq t$. Therefore, by Theorem \ref{smalldensecomps}, and since $G$ is $t$-clean, it follows that $\tw(\beta^*) \leq q$. Now, by Lemma \ref{lemma:tw-to-weighted-separator}, it follows that $\beta^*$ has a $\left(w^*, \frac{1}{2}\right)$-balanced separator of size at most $q+1 = n$. 
\end{proof}

Our final step is to lift the balanced separator from $\beta^*$ to $G$. Given two sets $X, Y \subseteq V(G)$, let us say that $X$ \emph{has a neighbour in $Y$} if there is an edge $xy \in E(G)$ with $x \in X$ and $y \in Y$. 
\begin{theorem} \label{thm:finale}
    Let $t \in \mathbb{N}$. There exists a $c = c(t)$ such that the following holds. Let $G$ be a $t$-clean (clock, diamond)-free graph with no star cutset. Let $w$ be a weight function on $G$. Then $G$ has a \whalf-balanced separator of size at most $c$. 
\end{theorem}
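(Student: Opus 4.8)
Fix $t \in \mathbb{N}$, let $n = n(t)$ be as in Lemma \ref{lem:sepbeta}, and set $c = c(t) = \max\{4t,\,(2t+1)n\}$. Let $G$ be a $t$-clean (clock, diamond)-free graph with no star cutset and let $w$ be a weight function on $G$; we must produce a \whalf-balanced separator of $G$ of size at most $c$, and we argue by contradiction, assuming there is none. Since $G$ is $t$-clean it contains no $K_{t+1}$, so $\omega(G) \le t$, and hence $G$ has no \whalf-balanced separator of size at most $4\omega(G) \le 4t \le c$; thus Assumption \ref{as:centralbag} is satisfied. Form $\beta$, $\beta^*$ and $w^*$ as in Section \ref{sec:inside}. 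By Lemma \ref{lem:sepbeta}, $\beta^*$ has a $\left(w^*,\frac12\right)$-balanced separator $S^*$ with $|S^*| \le n$.

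The heart of the argument is to \emph{lift} $S^*$ to a \whalf-balanced separator of $G$. The candidate is
\[ S \;=\; (S^*\cap\beta)\;\cup\;\bigcup\bigl\{\,X\in\core_{w,G}(\mathcal{X}(G)) \;:\; (P(X))^*\cap S^*\neq\emptyset\,\bigr\}. \]
Each $X\in\core_{w,G}(\mathcal{X}(G))$ is a union of two cliques of $G$, so $|X|\le 2\omega(G)\le 2t$; and because the marker paths have pairwise disjoint interiors, at most $|S^*\setminus\beta|\le n$ sets $X$ occur in the union. Hence $|S|\le n+2tn\le c$, so it suffices to prove that $S$ is a \whalf-balanced separator of $G$, since this contradicts the choice of $G$.

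To that end let $D$ be a component of $G\setminus S$; the goal is $w(D)\le\frac12$. If $D\cap\beta=\emptyset$, then $D$, being connected, lies in a single component of $G\setminus\beta$, hence in $D(X)$ for some $X\in\core_{w,G}(\mathcal{X}(G))$; writing $(A,C,B)=S_{w,G}(X)$ we get $D\subseteq D(X)\subseteq A$, and since $w(B)>\frac12$ this gives $w(D)\le w(A)<\frac12$. Otherwise $D\cap\beta\neq\emptyset$, and the plan is to locate a single component $E$ of $\beta^*\setminus S^*$ with $D\cap\beta\subseteq E$ and $w(D)\le w^*(E)$, which finishes since $w^*(E)\le\frac12$. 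First, $D\cap\beta\subseteq\beta\setminus S^*$ because $S\supseteq S^*\cap\beta$. Second, any two vertices of $D\cap\beta$ joined by a path in $G\setminus S$ can be joined by a walk in $\beta^*\setminus S^*$: every maximal subpath of such a path that runs in $G\setminus\beta$ lies in one component $D''$ of $G\setminus\beta$ and enters and leaves through vertices of $X:=X(D'')$ avoiding $S$, which forces $(P(X))^*\cap S^*=\emptyset$; one then reroutes inside $G[X\cup (P(X))^*]$, which is a pair of cliques joined by the marker path $P(X)$, the latter an induced path whose interior has all degrees two and lies in no triangle (by the choice of $P(X)$ through Lemma \ref{lem:marker}). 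Finally, for the weight bound one uses that $w^*$ agrees with $w$ on $\beta$, and that the entire weight $w(D(X))$ of each $D(X)$ that $D$ meets has been relocated to a single vertex $a_X\in(P(X))^*$ lying in $E$; then $w(D)=w(D\cap\beta)+w(D\setminus\beta)\le w^*(E\cap\beta)+\sum_{X} w^*(a_X)\le w^*(E)$.

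The main obstacle is this lifting step, specifically the claim that a component $D$ of $G\setminus S$ meeting $\beta$ embeds, at the level of its $\beta$-vertices, into one component $E$ of $\beta^*\setminus S^*$, together with the claim that for every component $D''$ of $G\setminus\beta$ met by $D$ the relocated-weight vertex $a_{X(D'')}$ also lies in $E$ (this is what prevents over- or under-counting in $w(D)\le w^*(E)$). Here the earlier structural results are exactly what is needed: loose non-crossing of the canonical separations for $\core_{w,G}(\mathcal{X}(G))$ (Theorem \ref{thm:noncrossing}) controls how the sets $D(X)$ sit relative to one another, and the fact that marker-path interiors are induced, triangle-free, and of degree two (Lemma \ref{lem:marker}) controls the adjacencies between $\beta$, the marker paths, and the components of $G\setminus\beta$ and makes the rerouting go through; some care is needed because endpoints of a marker path may themselves land in $S^*$. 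With Theorem \ref{thm:finale} in hand, Lemma \ref{lemma:bs-to-tw} bounds $\tw(G)$ for all such $G$, which is Theorem \ref{thm:mainnew} and hence, via the reductions of Section \ref{sec:diamond}, Theorem \ref{thm:main}.
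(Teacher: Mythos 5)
Your overall plan (verify Assumption \ref{as:centralbag}, apply Lemma \ref{lem:sepbeta} to get a small $\left(w^*,\frac12\right)$-balanced separator $S^*$ of $\beta^*$, then lift it to $G$) matches the paper, and your handling of marker-path interiors hit by $S^*$ (replace them by the whole corresponding $X$) is the same as the paper's rule $Y(v)=X$ for $v\in (P(X))^*$. But your lifted set is genuinely too small: for a vertex $v\in S^*\cap\beta$ you add nothing beyond $v$ itself, whereas the paper additionally adds $Y(v)=N_{\beta^*}(v)$ (or the set $Z_2$ of Lemma \ref{lem:centraldegree}(ii) when $v$ is a claw center), and this is exactly what makes the lifting work. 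The place where your argument breaks is the step you flag as needing ``some care'': the rerouting of a subpath through a component $D''$ of $G\setminus\beta$ into $G[X\cup (P(X))^*]$, $X=X(D'')=K_1\cup K_2$. The reroute must use the marker path $P(X)$, and its end $x\in K_1$ may lie in $S^*$. Then a vertex $y\in K_1\setminus S^*$ adjacent to $D''$ and a vertex of $K_2$ (or the vertex $a_X$ carrying the weight $w(D(X))$) may lie in \emph{different} components of $\beta^*\setminus S^*$, yet they become joined in $G\setminus S$ through $D''$, because nothing in your $S$ separates $y$ from $D''$. A single component of $G\setminus S$ can then absorb two components of $\beta^*\setminus S^*$, each of $w^*$-weight up to $\frac12$, so the bound $w(M)\le w^*(E)\le\frac12$ fails and your $S$ need not be a \whalf-balanced separator.

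The paper closes precisely this hole with Lemma \ref{lem:centraldegree}(ii): if the marker-path end $x$ lies in the separator and has neighbours in two different components of $\beta^*\setminus S^*$, then $x$ is a claw center, and any neighbour $y$ of $x$ with $x,y\in K_1$ for some $X\in\core_{w,G}(\mathcal{X}(G))$ such that $y$ has a neighbour in a component of $D(X)$ must belong to the bounded set $Z_2=K\cup\{x',x''\}$; since $Z_2\subseteq Y$ is deleted, the dangerous vertex $y$ is gone and the merging cannot occur (this is the content of statement \eqref{st:almostdone}). Your proposal never invokes Lemma \ref{lem:centraldegree} at all, and Theorem \ref{thm:noncrossing} and Lemma \ref{lem:marker}, which you cite instead, do not by themselves rule out the scenario above. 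To repair the proof you would have to enlarge your separator to $S\cup\bigcup_{v\in S^*\cap\beta}Y(v)$ as in the paper (which still fits the budget, since each $Y(v)$ is a clique plus at most two vertices, giving the same bound $n(2t+1)\le c$), and then run the claw-center argument; as written, the lifting step is a genuine gap rather than a routine verification.
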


Recall that the above, together with Lemma \ref{lemma:bs-to-tw}, implies Theorem \ref{thm:mainnew}, which in turn implies Theorem~\ref{thm:main}, as discussed in Section~\ref{sec:diamond}. It remains to prove Theorem \ref{thm:finale}. 

\begin{proof}[Proof of Theorem \ref{thm:finale}]
    Let $n = n(t)$ be as in Lemma \ref{lem:sepbeta}. We define 
    $c = \max\{4t, n(2t+1)\}.$

    Suppose that $G$ contains no \whalf-balanced separator of size at most $c$. Since $G$ is $t$-clean, it follows that $\omega(G) \leq t$. Since $c \geq 4t$, it follows that $G$ contains no \whalf-balanced separator of size at most $4 \omega(G)$. Therefore, Assumption \ref{as:centralbag} holds, and in particular, we can define $\beta^*$ and $w^*$ as in Section \ref{sec:inside}. 

    By Lemma \ref{lem:sepbeta}, it follows that $\beta^*$ has a $\left(w^*, \frac{1}{2}\right)$-balanced separator $S$ of size at most $n$. 

    Let us define a set $Y$ as follows. For every $v \in S$ such that $v \in (P(X))^*$ for some $X \in \core_{w, S}(\mathcal{X}(G))$, we set $Y(v) = X$. For every $v \in S$ which is not a claw center in $\beta^*$, we let $Y(v) = N_{\beta^*}(v)$. For every $v \in S$ which is a claw center in $\beta^*$, we let $Y(v) = Z_2$ where $Z_2$ is as is Lemma \ref{lem:centraldegree}(ii). Finally, we let $Y = S \cup \bigcup_{v \in S} Y(v)$. From the definition of $Y(v)$ above, it follows that $Y(v)$ is either the union of two cliques or the union of a clique and two vertices for every $v \in S$; therefore, $|Y| \leq |S|(2t+1) \leq n(2t+1) \leq c$, as desired. 

    It remains to show that $Y$ is a \whalf-balanced separator of $G$. Let $D$ be a component of $G \setminus \beta$. Writing $X = X(D)$ and $S_{w, G}(X) = (A, C, B)$, we have that $D \subseteq A$ and so $w(D) \leq \frac{1}{2}$. Let $D'$ be a component of $D \setminus Y$. If $D'$ has no neighbour in $\beta \setminus Y$, then $D'$ is a component of $G \setminus Y$ and of weight at most $\frac{1}{2}$, as desired. 
    
    Given a component $R$ of $\beta \setminus Y$, let us define $\hat{R}$ as the component of $\beta^* \setminus S$ containing $R$. 
    
    \sta{\label{st:almostdone} Let $D$ be a component of $G \setminus \beta$, and let $D'$ be a component of $D \setminus Y$. Let $X = X(D)$. If $D'$ has a neighbour in a component $R$ of $\beta \setminus Y$, then $a_X \in \hat{R}$. In particular, $\hat{R}$ is the same for all components $R$ of $\beta \setminus Y$ in which $D'$ has a neighbour.}
    
    Suppose not, that is, $D'$ has a neighbour $y$ in a component $R$ of $\beta \setminus Y$, but $a_X \not\in \hat{R}$. If $(P(X))^* \cap S \neq \emptyset$, say $q \in (P(X))^* \cap S$, then $X = Y(q) \subseteq Y$, and so $D'$ has no neighbour in $\beta \setminus Y \subseteq \beta \setminus X$. Therefore, we may assume that $a_X \in (P(X))^*$ is in a component $R'$ of $\beta^* \setminus S$ with $R' \neq \hat{R}$. 

    Writing $X = K_1 \cup K_2 = \closure(K_1, K_2)$, and using that $y \in \beta \cap N_G(D') \subseteq \beta \cap N_G[D] = X$, we may assume that $y \in K_1$. Let $x, x'$ be the ends of $P(X)$. Since $(P(X))^*$ is disjoint from $S$ and contains $a_X \in R'$, it follows that $(P(X))^* \subseteq R'$. But one of $x, x'$, say $x$, is in $K_1$ and therefore adjacent to $y \in \hat{R}$ (which is anticomplete to $R'$); so we conclude that $x \in S$ (as $x$ has a neighbour in both $\hat{R}$ and $R'$, two different components of $\beta^* \setminus S$); see Figure \ref{fig:lifting}. But then, as $y \in N_{\beta^*}(x) \setminus Y(x)$, it follows that $x$ is a claw center in $\beta^*$, and by Lemma \ref{lem:centraldegree} and the choice of $Y(x)$, there is no $X \in \core_{w, G}(\mathcal{X}(G))$ such $X = K_1 \cup K_2 = \closure(K_1, K_2)$ such that $x, y \in K_1$ and $y$ has a neighbour in $D' \subseteq D(X)$. This is a contradiction (because $X, x, y, K_1, K_2$ have precisely those properties), and proves \eqref{st:almostdone}. 

    \medskip

    \begin{figure}[t]
    \begin{center}
        \includegraphics[width=0.5\textwidth]{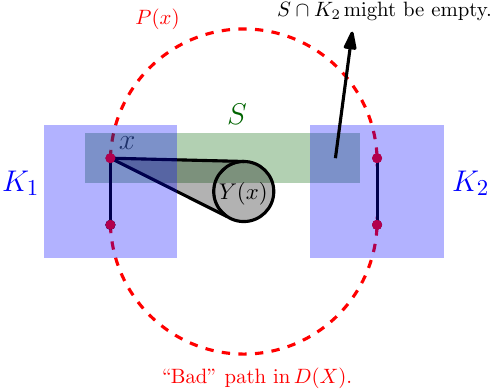}
        \caption{Proof of Theorem \ref{thm:finale}. \pictured}
        \label{fig:lifting}
        \end{center}
    \end{figure}

    From \eqref{st:almostdone}, it follows that for every component $M$ of $G \setminus Y$, there is a component $\hat{R}$ of $\beta^* \setminus S$ such that: 
    \begin{itemize}
        \item $M \cap \beta \subseteq \hat{R}$; and
        \item For every $x \in G \setminus \beta$ such that $x$ is in a component $D$ of $G \setminus \beta$ and $x \in M$, we have that $a_{X(D)} \in \hat{R}$. 
    \end{itemize}
    Now it follows that $w(M) \leq w^*(\hat{R}) \leq \frac{1}{2}$; but then $Y$ is a \whalf-balanced separator, as desired. 
\end{proof}
 
\bibliographystyle{plain}
\bibliography{bib}

\begin{thebibliography}{10}

\bibitem{abrishami2022induced}
Tara Abrishami, Bogdan Alecu, Maria Chudnovsky, Sepehr Hajebi, Sophie Spirkl, and Kristina Vu\v{s}kovi\'c.
\newblock Induced subgraphs and tree decompositions {{V}}. {{O}}ne neighbor in a hole.
\newblock {\em arXiv preprint arXiv:2205.04420}, 2022.

\bibitem{abrishami2021induced}
Tara Abrishami, Maria Chudnovsky, Cemil Dibek, Sepehr Hajebi, Pawe{\l} Rz{\k{a}}{\.z}ewski, Sophie Spirkl, and Kristina Vu{\v{s}}kovi{\'c}.
\newblock Induced subgraphs and tree decompositions {{II}}. {{T}}oward walls and their line graphs in graphs of bounded degree.
\newblock {\em arXiv preprint arXiv:2108.01162}, 2021.

\bibitem{abrishami2021submodular}
Tara Abrishami, Maria Chudnovsky, Cemil Dibek, and Kristina Vu{\v{s}}kovi{\'c}.
\newblock Submodular functions and perfect graphs.
\newblock {\em arXiv preprint arXiv:2110.00108}, 2021.

\bibitem{abrishami2022induced2}
Tara Abrishami, Maria Chudnovsky, Sepehr Hajebi, and Sophie Spirkl.
\newblock Induced subgraphs and tree decompositions {{IV}}.({{E}}ven hole, diamond, pyramid)-free graphs.
\newblock {\em arXiv preprint arXiv:2203.06775}, 2022.

\bibitem{twix}
Bogdan Alecu, Maria Chudnovsky, Sepehr Hajebi, and Sophie Spirkl.
\newblock Induced subgraphs and tree decompositions ix. grid theorem for perforated graphs.
\newblock {\em arXiv preprint arXiv:2305.15615}, 2023.

\bibitem{bodlaender2006safe}
Hans~L Bodlaender and Arie Koster.
\newblock Safe separators for treewidth.
\newblock {\em Discrete Mathematics}, 306(3):337--350, 2006.

\bibitem{bonamy2023sparse}
Marthe Bonamy, {\'E}douard Bonnet, Hugues D{\'e}pr{\'e}s, Louis Esperet, Colin Geniet, Claire Hilaire, St{\'e}phan Thomass{\'e}, and Alexandra Wesolek.
\newblock Sparse graphs with bounded induced cycle packing number have logarithmic treewidth.
\newblock In {\em Proceedings of the 2023 Annual ACM-SIAM Symposium on Discrete Algorithms (SODA)}, pages 3006--3028. SIAM, 2023.

\bibitem{cygan2015parameterized}
Marek Cygan, Fedor~V Fomin, {\L}ukasz Kowalik, Daniel Lokshtanov, D{\'a}niel Marx, Marcin Pilipczuk, Micha{\l} Pilipczuk, and Saket Saurabh.
\newblock {\em Parameterized algorithms}, volume~4.
\newblock Springer, 2015.

\bibitem{davies}
James Davies.
\newblock Appeared in an Oberwolfach technical report, {\em DOI:10.4171/OWR/2022/1}.

\bibitem{harvey2017parameters}
Daniel~J Harvey and David~R Wood.
\newblock Parameters tied to treewidth.
\newblock {\em Journal of Graph Theory}, 84(4):364--385, 2017.

\bibitem{pohoata2014unavoidable}
Andrei~Cosmin Pohoata.
\newblock Unavoidable induced subgraphs of large graphs.
\newblock Senior thesis, Princeton University, 2014.

\bibitem{robertson1986graph5}
Neil Robertson and Paul Seymour.
\newblock Graph minors. {{V}}. {{E}}xcluding a planar graph.
\newblock {\em Journal of Combinatorial Theory, Series B}, 41(1):92--114, 1986.

\bibitem{robertson1986graph}
Neil Robertson and Paul~D. Seymour.
\newblock Graph minors. {{II}}. {{A}}lgorithmic aspects of tree-width.
\newblock {\em Journal of algorithms}, 7(3):309--322, 1986.

\bibitem{sintiari2021theta}
Ni~Luh~Dewi Sintiari and Nicolas Trotignon.
\newblock ({{T}}heta, triangle)-free and (even hole, {{$K_4$}})-free graphs—{{P}}art 1: {{L}}ayered wheels.
\newblock {\em Journal of Graph Theory}, 97(4):475--509, 2021.

\end{thebibliography}

\end{document}